\documentclass{amsart}
\usepackage{amsmath,amsthm,amsfonts,amssymb}
\usepackage{mathrsfs}
\usepackage{stmaryrd}
\usepackage{mathtools}
\usepackage{subfiles}
\usepackage[sans]{dsfont}
\usepackage{enumerate}
\usepackage{fullpage}
\usepackage{tikz}
\usepackage{float}
\usepackage{cancel}
\usepackage{xcolor}
\usepackage{setspace}
\usepackage{calligra}
\usepackage{MnSymbol}
\RequirePackage{doi}
\usepackage{hyperref}
\usepackage[all]{xy}
\usepackage[capitalize]{cleveref}

\usetikzlibrary{shapes.geometric}
\usetikzlibrary{calc}

\newtheorem*{clm*}{Claim}

\newtheorem{thm}{Theorem}[section]
\newtheorem{lem}[thm]{Lemma}
\newtheorem{cor}[thm]{Corollary}
\newtheorem{prop}[thm]{Proposition}

\theoremstyle{definition}
\newtheorem{defn}[thm]{Definition}

\newtheorem{axiom}[thm]{Axiom}

\theoremstyle{remark}
\newtheorem{rmk}[thm]{Remark}

\newtheorem{ntn}[thm]{Notation}

\newtheorem{cons}[thm]{Construction}

\newtheorem{ques}[thm]{Question}

\numberwithin{equation}{section}

\DeclareMathOperator{\aim}{Im}

\DeclareMathOperator{\ahom}{Hom}
\DeclareMathOperator{\aker}{Ker}
\DeclareMathOperator{\ann}{Ann}
\DeclareMathOperator{\ass}{Ass}

\DeclareMathOperator{\coker}{Coker}

\DeclareMathOperator{\ext}{Ext}
\DeclareMathOperator{\fr}{Frac}

\DeclareMathOperator{\SHom}{\mathscr{H}\text{\kern -3pt {\calligra\large om}}\,}
\DeclareMathOperator{\SKer}{\mathscr{K}\text{\kern -3pt {\calligra\large er}}\,}
\DeclareMathOperator{\SDer}{\mathscr{D}\text{\kern -2pt {\calligra\large er}}\,}
\DeclareMathOperator{\SEnd}{\mathscr{E}\text{\kern -3pt {\calligra\large nd}}\,}
\DeclareMathOperator{\SSym}{\mathscr{S}\text{\kern -3pt {\calligra\large ym}}\,}

\newcommand{\mbb}[1]{\mathbb{#1}}

\newcommand{\QQ}{\mbb{Q}}
\newcommand{\NN}{\mbb{N}}


\newcommand{\mf}[1]{\mathfrak{#1}}
\newcommand{\mm}{\mf{m}}

\newcommand{\mrm}[1]{\mathrm{#1}}

\newcommand{\mcal}[1]{\mathcal{#1}}
\newcommand{\cA}{\mcal{A}}
\newcommand{\cB}{\mcal{B}}
\newcommand{\cC}{\mcal{C}}

\newcommand{\OO}{\mcal{O}}
\newcommand{\cR}{\mcal{R}}

\newcommand{\seq}[2][x]{#1_1,...,#1_{#2}}

\newcommand{\rdf}{\mathds{R}}


\DeclarePairedDelimiter\ps{\llbracket}{\rrbracket}
\DeclarePairedDelimiter\set{\{}{\}}
\DeclarePairedDelimiter\abs{\lvert}{\rvert}

\setlength{\parindent}{0pt}
\setlength{\parskip}{0.5\baselineskip}


\makeatletter
\renewcommand*{\eqref}[1]{%
  \hyperref[{#1}]{\textup{\tagform@{\ref*{#1}}}}%
}
\makeatother

\definecolor{zhan}{rgb}{0.0, 0.44, 1.0}

\hypersetup{
  colorlinks = true,
  linkcolor = red,
  anchorcolor = zhan,
  citecolor = blue,
  filecolor = cyan,
  menucolor = red,
  runcolor = cyan,
  urlcolor = magenta,
}

\newcommand{\mbc}{\cB^{\circ}}
\newcommand{\vr}[1]{\mcal{#1}^{\circ}}
\newcommand{\kc}{K^{\circ}}
\newcommand{\nseq}[3][x]{#1_{#2},\ldots,#1_{#3}}
\newcommand{\epg}{\left( pg\right)^{\varepsilon}}
\newcommand{\pir}[1]{#1^{1/p^{\infty}}}
\newcommand{\cp}[1]{\widehat{#1^+}}
\newcommand{\dsm}[2][R]{#1^{\oplus #2}}
\newcommand{\alm}[1]{\overset{a}{#1}}
\newcommand{\rpptl}{R^+(t_{\lambda}:\lambda\in \Lambda)}
\newcommand{\rpbtl}{R^+[t_{\lambda}:\lambda\in \Lambda]}
\newcommand{\rptl}{R(t_{\lambda}:\lambda\in \Lambda)}

\newcommand{\seqp}[3][N]{#2_1^{#1},\ldots,#2_{#3}^{#1}}

\DeclareMathOperator{\spa}{Spa}
\DeclareMathOperator{\sym}{sym}
\DeclareMathOperator{\pc}{pc}
\newcommand{\epf}{\mathsf{epf}}
\newcommand{\wepf}{\mathsf{wepf}}
\newcommand{\rof}{\mathsf{r1f}}
\newcommand{\cl}{\mathsf{cl}}

\begin{document}

\title{Closure Operations in Complete Local Rings of Mixed Characteristic}
\author{Zhan Jiang}
\date{\today}
\address{
Zhan Jiang \\
  Department of Mathematics \\
  University of Michigan \\
  Ann Arbor, MI 48109--1043 \\
  USA
}
\email{zoeng@umich.edu}
\urladdr{\url{www-personal.umich.edu/~zoeng/}}
\thanks{The author was partly supported by NSF grant DMS-1902116}

\begin{abstract}
  The extended plus ($\epf$) closure and the rank 1 ($\rof$) closure are two closure operations introduced by Raymond C. Heitmann for rings of mixed characteristic. Recently, he and Linquan Ma proved that the $\epf$ closure satisfies the usual colon-capturing property under mild conditions. In this paper, we extend their result and prove that the $\epf$ closure satisfies what we call the $p$-colon-capturing property. Based on that, we define a new closure notion called ``weak $\epf$ closure,'' and prove that it satisfies the generalized colon-capturing property and some other colon-capturing properties. This gives a new proof of the existence of big Cohen-Macaulay algebras in the mixed characteristic case. We also show that any module-finite extension of a complete local domain is $\epf$-phantom, which generalizes a result of Mel Hochster and Craig Huneke about ``phantom extensions.'' Finally, we prove some related results in characteristic $p$.
\end{abstract}

\maketitle

\section{Introduction}\label{sn-introduction}
In equal characteristic $p>0$, tight closure was introduced by Mel Hochster and Craig Huneke in 1986, and turned out to be a very powerful tool. For example, it can be used to prove the existence of balanced big Cohen-Macaulay algebras for rings containing a field. In order to have a similar tool in mixed characteristic, Raymond Heitmann introduced four closure operations for mixed characteristic rings \cite{Heitmann2001}. He also proved one of them, the $\epf$ closure, satisfies the (usual) colon-capturing \cite[Theorem 3.7]{Heitmann2002} for rings of mixed characteristic of dimension at most $3$. Based on this result, he was able to prove the direct summand conjecture in that case.

Geoffrey Dietz and Rebecca R.G. studied the relation between the existence of balanced big Cohen-Macaulay
algebras (modules) and closure operations. Dietz introduced seven axioms (see \cite[Axiom
1.1]{Dietz2010} and \cref{app-axm-DietzAxioms} in \cref{sn-Preliminaries}). We call a closure operation a \emph{Dietz closure} if it satisfies all of Dietz's axioms. Dietz proved that a local domain $R$ has a Dietz closure if and only if it has a balanced big Cohen-Macaulay module. In \cite{R.G.2018}, R.G. introduced a new axiom called the \emph{algebra axiom}, and proved that the existence of a Dietz closure satisfying the algebra axiom is equivalent to the existence of a balanced big Cohen-Macaulay algebra. Recently, the existence of balanced big Cohen-Macaulay algebras in mixed characteristic was completely solved by Yves Andr\'e using perfectoid techniques \cite{Andre2018d}. Inspired by this result, Raymond Heitmann and Linquan Ma were able to prove that the $\epf$ closure satisfies the (usual) colon-capturing condition \cite[Corollary 3.11]{Heitmann2018}.

The purpose of our paper is to develop a new closure operation called $\wepf$ (\cref{m-defn-WEPF}), and prove that it is a Dietz closure satisfying the algebra axiom (\cref{app-thm-WEPFDietz}). This gives a new proof of the existence of big Cohen-Macaulay algebras (modules). We achieve this by proving a strong property about $\epf$ closure of ideals generated by part of system of parameters (\cref{m-thm-pColonCap}), which we call \emph{$p$-colon-capturing} (\cref{m-defn-pcc}). This property generalizes some results in \cite{Heitmann2018}. We point out that our $p$-colon-capturing property can also be used to prove that $\rof$ is a Dietz closure satisfying the algebra axiom. So far as we know, the problem whether $\epf$ is a Dietz closure remains open.

We also prove that every module-finite extension of a complete
local domain of mixed characteristic with an $F$-finite residue field is $\epf$-phantom (\cref{m-thm-EPFPhantom}). This result,
together with Heitmann and Ma's result \cite[Theorem
3.19]{Heitmann2018}, implies the direct summand conjecture. We make great use of techniques from  Yves Andr\'e's \cite{Andre2018l} and
Bhargav Bhatt's results \cite{Bhatt2017}. We also prove a property (\cref{p-thm-ColonCap}) similar to $p$-colon-capturing in the positive characteristic case. This is a completely new phenomenon about tight closure.

This paper is organized as follows: \cref{sn-Preliminaries} collects some
preliminaries on basic notations and techniques. In \cref{sn-p-ColonCapturing}, we prove the $p$-colon-capturing property using the perfectoid Abhyankar lemma (\cref{m-thm-PerfectoidAbhyankar}). This is one of our main results (\cref{m-thm-pColonCap}). In \cref{sn-WeakEPFClosure} we introduce our
new closure operation $\wepf$, and prove that it is a Dietz closure satisfying the algebra axiom (\cref{m-thm-WepfGCC}, \cref{app-thm-WEPFDietz}). In \cref{sn-PhantomExtensions} we show that module-finite extensions are $\epf$-phantom (\cref{m-thm-EPFPhantom}) using techniques different from those in \cref{sn-p-ColonCapturing}. Finally, in \cref{sn-p-PositiveCharCase}, we study the behaviour of regular sequences on some non-noetherian rings (\cref{p-thm-RSonT}) and prove results similar to $p$-colon-capturing (\cref{p-thm-ColonCap}) in
characteristic $p$.

\subsection*{Acknowledgments}
I would like to heartily thank Mel Hochster for directing me to this question and offering helpful discussions and advice.
I am deeply indebted to the work of Raymond Heitmann and Linquan Ma on the $\epf$ closure \cite{Heitmann2018}, and thank them both for fruitful conversations on the first few drafts of this paper. Raymond Heitmann also pointed out the unnecessary assumption of uncountably infinite residue field in \cref{p-thm-ColonCap}.
I want to thank Devlin Mallory, Takumi Murayama, Rebecca R.G. and Kazuma Shimomoto for useful conversations and comments on this paper. I also want to thank the anonymous referee for the careful reading of the paper and many insightful comments and suggestions.

\section{Preliminaries} \label{sn-Preliminaries}
Throughout this paper, $p$ will always be a positive prime integer. A
ring of mixed characteristic $p$ is a ring $R$ of characteristic $0$
with $p$ in every maximal ideal of $R$. We will work with a complete
local ring of mixed characteristic $p$ in all sections except \cref{sn-p-PositiveCharCase}. We will also use the following notation.
\begin{ntn}\label{i-ntn-plus}
Let $R$ be a domain. By $R^+$ we mean an absolute integral closure of a domain $R$, i.e., the integral closure of $R$ in an algebraic closure of its fraction field $\fr(R)$. For any $R$-module $M$, we write $M^+:=R^+\otimes_R M$. For any submodule $W\subseteq M$, we write $W^+$ for the tensor product $R^+\otimes_R W$, and $\aim(W^+\to M^+)$ for the image of the map $R^+\otimes_R W\to R^+\otimes_R M$ in $M^+$.
\end{ntn}

Note that in the literature, the notion $I^+$ means plus closure of $I$, i.e., $IR^+\cap R$. Since we are using neither the plus closure nor the notation $I^+$, there should be no confusion.

\subsection{Closure operations in mixed characteristic}
 Let us recall the definition of Heitmann's two closure operations, $\epf$ and $\rof$, below.

\begin{defn}\label{i-def-epf}
  Let $R$ be an integral domain of mixed characteristic $p$ and let $I$ be an ideal of $R$. Then an element $x\in R$ is in the (full) extended plus closure of $I$, i.e., $x\in I^{\epf}$, provided there exists $c\in R-\set{0}$ such that for every positive rational number $\varepsilon$ and every positive integer $N$, $c^{\varepsilon}x\in (I,p^N)R^+$. The element $x$ is in the (full) rank $1$ closure of $I$, i.e., $x\in I^{\rof}$, if for every rank one valuation $\nu$ on $R^+$, every positive integer $N$, and every positive rational number $\varepsilon$, there exists $d\in R^+-\set{0}$ with $\nu(d)< \varepsilon$ such that $d x\in (I,p^N)R^+$.
\end{defn}
From the definition above, we immediately see that the $\epf$ closure is always contained in the $\rof$ closure, i.e., $I^{\epf}\subseteq I^{\rof}$ for any ideal $I\subseteq R$. We also note that there is a natural generalization of these definitions to (finitely generated) modules. See also \cite[Definition 7.1]{R.G.2016}. We include the definition below.

\begin{defn}\label{i-def-EpfModule}
  Let $R$ be an integral domain of mixed characteristic $p$ and let $W\subseteq M$ be finitely generated $R$-modules. Let $M^+,W^+$ be the notation in \cref{i-ntn-plus}, and let $u$ be an element of $M$. Then $u\in M$ is in the $\epf$ closure of $W$ if there is some $c\in R-\set{0}$ such that for any $\varepsilon\in\QQ^+,N\in\NN$ we have
  \[
    c^{\varepsilon}\otimes u \in \aim \left( W^+\rightarrow M^+ \right)+p^N M^+.
  \]
  Moreover, $u$ is in the $\rof$ closure of $W$ if for every rank one valuation $\nu$ on $R^+$, every positive integer $N$, and every positive rational number $\varepsilon$, there exists $d\in R^+-\set{0}$ with $\nu(d)< \varepsilon$ such that
  \[
    d\otimes u \in \aim \left( W^+\rightarrow M^+ \right)+p^N M^+.
  \]
\end{defn}

\subsection{Closure axioms}
Here we present the seven axioms defined by Dietz in \cite{Dietz2010}, together with the algebra axiom defined by R.G. in \cite[Axiom 3.1]{R.G.2018}.
\begin{axiom}\label{app-axm-DietzAxioms}
  Let $(R,\mm)$ be a complete local domain possessing a closure operation $\cl$. Let $Q,M$ and $W$ be arbitrary finitely generated $R$-modules with $Q\subseteq M$.
  \begin{enumerate}
  \item (Extension) $Q_M^{\cl}$ is a submodule of $M$ containing $Q$.
  \item (Idempotence) $(Q_M^{\cl})_M^{\cl}=Q_M^{\cl}$.
  \item (Order-preservation) If $Q\subseteq M\subseteq W$, then $Q_W^{\cl}\subseteq M_W^{\cl}$.
  \item (Functoriality) Let $f:M\rightarrow W$ be a homomorphism. Then $f(Q_M^{\cl})\subseteq (f(Q))_W^{\cl}$.
  \item (Semi-residuality) If $Q_M^{\cl}=Q$, then $0^{\cl}_{M/Q}=0$.
  \item The maximal ideal $\mm$ and the zero ideal $0$ are closed.
  \item (Generalized Colon-capturing) Let $\seq{k+1}$ be a partial system of parameters for $R$ and
    let $J=(\seq{k})$. Suppose that there exists a surjective homomorphism
    $f:M\rightarrow R/J$ such that $f(v) = x_{k+1}+J$. Then
    \[
      (Rv)^{\cl}_M \bigcap \aker f \subseteq (Jv)_M^{\cl}.
    \]
  \end{enumerate}
  Next is R.G.'s algebra axiom.
  \begin{enumerate}
    \setcounter{enumi}{7}
  \item If $R\overset{\alpha}{\to} M,1\mapsto e_{1}$ is $\cl$-phantom, then
    the map $\alpha':R\to \sym^{2}(M),1\mapsto e_{1}\otimes e_{1}$ is
    $\cl$-phantom.
  \end{enumerate}
\end{axiom}
The seventh axiom (the generalized colon-capturing axiom) is also equivalent to the following axiom if the closure operation satisfies the other six Dietz axioms. See \cite[Lemma 1.3]{Dietz2010}.
\begin{axiom}\label{axiom-alternate}
  Let $R$ be a complete local domain possessing a closure operation
  $\cl$. Assume that $\dim R=d$. Let $\seq{k+1}$ be a partial system of parameters for $R$ where $0\leqslant k<d$ and
  let $J=(\seq{k})$ ($J=0$ if $k=0$). Suppose that there exists a homomorphism
  $f:M\rightarrow R/J$ such that $f(v) = x_{k+1}+J$. Then
  \[
    (Rv)^{\cl}_M \bigcap \aker f \subseteq (Jv)_M^{\cl}.
  \]
\end{axiom}
Since both the $\epf$ and $\rof$ closures satisfy the first 6 axioms \cite[Section 7]{R.G.2016}, we have no trouble using this equivalent form. 

This axiom (7) in \cref{app-axm-DietzAxioms} and its alternate form \cref{axiom-alternate} are rather subtle in comparison with the other axioms. It is not even obvious that tight closure satisfies this condition. Axiom (7) implies that the closure operation is ``big enough'' without being ``too big''. Note that integral closure for ideals can be extended to modules such that it satisfies axioms (1) - (6) and ordinary colon capturing. 

 However, generalized colon-capturing is the most critical in the proof by Dietz that the existence of a closure operation satisfying axioms (1) - (7) in \cref{app-axm-DietzAxioms} is equivalent to the existence of a balanced big Cohen-Macaulay module. Dietz also proved that the usual notion of colon-capturing follows from it \cite[Proposition 1.4]{Dietz2010}.

\subsection{Phantom extensions}
The notion of phantom extensions was first introduced by Hochster and Huneke in \cite[Section 5]{Hochster1994a} in order to produce a new proof for the existence of big Cohen-Macaulay modules. In the same paper, they also proved that every module-finite extension of a reduced ring of positive characteristic is a phantom extension \cite[Theorem 5.17]{Hochster1994a}. The generalized notion related to a closure operation was introduced by Dietz \cite[Definition 2.2]{Dietz2010}, which we record below.

An exact sequence $0\rightarrow R\overset{\alpha}{\rightarrow} M\rightarrow Q\rightarrow 0$ determines an element $\epsilon$ in $\ext_R^1(Q,R)$ via the Yoneda correspondence. If $P_{\bullet}$ is a projective resolution of $Q$ consisting of finitely generated projective modules $P_i$, then $\epsilon$ is a cocycle element in $\ahom_R(P_1,R)$. We call $\epsilon$ $\cl$-phantom if $\epsilon$ is in $\aim\left(\ahom_R(P_0,R)\rightarrow\ahom_R(P_1,R)\right)^{\cl}_{\ahom_R(P_1,R)}$.

\begin{rmk}
  This is different from requiring that $\epsilon$ as an element of $\ext_R^1(Q,R)$ is in the $\cl$ closure of $0$. Because $\ext_R^1(Q,R)$ is a submodule of $\ahom_R(P_1,R)/\aim\left(\ahom_R(P_0,R)\rightarrow\ahom_R(P_1,R)\right)$, the $\cl$-closure of $0$ in the latter one could be potentially larger than the closure in the former one.
\end{rmk}

We note that this definition is independent of the choice of the resolution of $Q$. For proofs, see \cite[Discussion 2.3]{Dietz2010}.

\subsection{Almost mathematics}
The language of almost mathematics is carefully studied in \cite{Gabber2003}. We will not use the full strength of that. The setup of almost mathematics is given by a ring $A$ together with an $A$-flat ideal $I$ such that $I^2=I$. The situation where almost mathematics is involved in this paper usually occurs over an algebra $A$ with an $A$-flat ideal $I=(\pir{c})A$, where $(\pir{c})A$ means the ideal generated by a compatible system of $p$-power roots of $c$, i.e., $(c^{1/p},c^{1/p^2},\ldots)A$.
This situation can be explained explicitly: let $M$ be an $A$-module. An element $u\in M$ is $I$-almost zero, i.e., $u\alm{=}0$, if and only if $c^{1/p^k}u=0$ for any $k\in \NN$, or, equivalently, $Iu=0$. An element $u$ is $I$-almost in a submodule $N$ of $M$, i.e., $u\alm{\in} N$ if its image in $M/N$ is almost zero. A submodule $N_1$ is $I$-almost in $N_2$, i.e., $N_1\alm{\subseteq} N_2$, if every element in $N_1$ is $I$-almost in $N_2$. Two submodules $N_1,N_2$ of $M$ are $I$-almost equal, i.e., $N_1\alm{=}N_2$, if $N_1\alm{\subseteq} N_2$ and $N_2\alm{\subseteq} N_1$. We will usually focus on ideals rather than submodules.

\subsection{Almost-pro-isomorphisms}
Here we briefly discuss the notion of almost mathematics in the world of pro-objects. See the detailed discussion in \cite[Section 11.3]{Bhatt2017notes}. We fix a ring $A$ and an $A$-flat ideal $I$ such that $I^2=I$. Let us consider a simpler setting: all objects are projective systems $\set{M_j}_{j\in J}$ of $A$-modules indexed by the positive integers.
\begin{defn}
  A pro-$A$-module $\set{M_j}_{j\in J}$ of $A$-modules is \emph{almost-pro-zero} if for each $w\in I$ and $j\in J$, there exists some $k\geqslant j$ such that the transition map $M_k\to M_j$ has its image killed by $w$; a map $\set{M_j}_{j\in J}\to\set{N_k}_{k\in K}$ of pro-$A$-modules is called an \emph{almost-pro-isomorphism} if the kernel and cokernel pro-objects are almost-pro-zero.
\end{defn}
In particular, we need the following lemma from \cite[Corollary 11.3.5]{Bhatt2017notes}.

\begin{lem}\label{m-lem-AlmostProIsomoprhism}
  Let $\set{M_j}_{j\in J}\to \set{N_k}_{k\in K}$ be an almost-pro-isomorphism, and let $F:\mrm{Mod}_R\to\mrm{Mod}_R$ be an $R$-linear functor. Then $\rdf\varprojlim\limits_j F(M_j)\to \rdf\varprojlim\limits_k F(N_k)$ is an almost isomorphism on each cohomology group.
\end{lem}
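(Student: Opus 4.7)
The plan is to reduce the statement to two elementary facts about almost-pro-zero systems and then apply a formal triangle argument. First, by passing to cofinal subsystems I may assume $J = K = \NN$ and that the almost-pro-isomorphism is a compatible family of $R$-linear maps $f_j\colon M_j \to N_j$; the hypothesis becomes that the pro-systems $\{\aker f_j\}$ and $\{\coker f_j\}$ are almost-pro-zero.

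The first key fact I would establish is that for any almost-pro-zero $\NN$-indexed pro-system $\{P_j\}$, both $\varprojlim_j P_j$ and $\varprojlim^1_j P_j$ are almost zero. The $\varprojlim$ case is immediate: for a coherent $(x_j)$ and $w \in I$, choose $k$ so that the transition $P_k \to P_j$ is killed by $w$, giving $w x_j = 0$. The $\varprojlim^1$ case follows from the Milnor presentation $\prod_j P_j \xrightarrow{1-\sigma} \prod_j P_j \to \varprojlim^1_j P_j \to 0$ via a Mittag-Leffler-style argument adapted to the ``killed by $w$'' setting. The second fact is that $F$ sends almost-pro-zero systems to almost-pro-zero systems: if a transition $\phi$ satisfies $w\phi = 0$ as an $R$-linear map, then by $R$-linearity $w F(\phi) = F(w\phi) = F(0) = 0$.

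To finish, I would use the distinguished triangle
\[
\rdf\varprojlim_j F(M_j) \to \rdf\varprojlim_j F(N_j) \to \rdf\varprojlim_j \mathrm{Cone}(F(f_j)) \to +1
\]
in $D(R)$, reducing the problem to showing that $\rdf\varprojlim_j \mathrm{Cone}(F(f_j))$ has almost-zero cohomology in each degree. Via the hypercohomology spectral sequence for the two-term complex $\mathrm{Cone}(F(f_j))$, this in turn reduces to showing that both pro-systems $\{\aker F(f_j)\}$ and $\{\coker F(f_j)\}$ are almost-pro-zero, after which the two facts above conclude the argument.

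The main obstacle is that $F$ is merely $R$-linear and not assumed exact, so $\aker F(f_j)$ need not agree with $F(\aker f_j)$ and $\coker F(f_j)$ need not agree with $F(\coker f_j)$. I would address this via the canonical comparison maps $F(\aker f_j) \to \aker F(f_j)$ and $\coker F(f_j) \to F(\coker f_j)$, which exist by functoriality since the compositions $\aker f_j \hookrightarrow M_j \xrightarrow{f_j} N_j$ and $M_j \xrightarrow{f_j} N_j \twoheadrightarrow \coker f_j$ vanish. Then, factoring the transition $M_k \to M_j$ through $\aim f_k$, one would track how the almost-pro-zero property of $\{\aker f_j\}$ and $\{\coker f_j\}$ forces the induced transitions on $\{\aker F(f_j)\}$ and $\{\coker F(f_j)\}$ to be killed by each $w \in I$. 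This element-chasing step is the delicate technical heart of the proof.
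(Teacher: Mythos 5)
The paper does not prove this lemma; it cites it as \cite[Corollary 11.3.5]{Bhatt2017notes}, so there is no internal proof to compare against. Evaluating your proposal on its own terms: the skeleton is right, but the step you flag as the ``delicate technical heart'' contains a genuine gap, and the comparison-map strategy you outline there will not close it.

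Your two preliminary facts are correct: $\varprojlim$ and $\varprojlim^1$ of an $\NN$-indexed almost-pro-zero system are almost zero, and an $R$-linear functor $F$ carries almost-pro-zero systems to almost-pro-zero systems, since a transition $\phi$ with $w\phi = 0$ gives $wF(\phi) = F(w\phi) = 0$. The cone/spectral-sequence reduction is also fine: everything hinges on showing $\set{\aker F(f_j)}$ and $\set{\coker F(f_j)}$ are almost-pro-zero. The problem is that the canonical comparison maps run the wrong way for this. Knowing $\set{F(\aker f_j)}$ is almost-pro-zero says nothing about $\set{\aker F(f_j)}$ unless $F(\aker f_j)\to\aker F(f_j)$ is surjective, which it need not be; and $\coker F(f_j)\to F(\coker f_j)$ need not be injective, so almost-pro-vanishing of the target does not propagate back. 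For a non-exact $F$ the class $\aker F(f_j)$ can be much larger than the image of $F(\aker f_j)$ (think of $F = -\otimes R/w$: the map $wR\hookrightarrow R$ has zero kernel, but $F$ of it is the zero map). No amount of element-chasing through those two comparison maps alone will rescue the claim.

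The missing idea is to construct an explicit pro-level \emph{almost inverse} before applying $F$. Concretely: given $w_1,w_2\in I$ and an index $j$, use almost-pro-vanishing of $\set{\aker f_j}$ to pick $k_1\geqslant j$ so that $w_1$ kills the image of $\aker f_{k_1}\to\aker f_j$, then use almost-pro-vanishing of $\set{\coker f_j}$ to pick $k\geqslant k_1$ so that $w_2\cdot\aim(N_k\to N_{k_1})\subseteq \aim f_{k_1}$. One then defines an $R$-linear map $g\colon N_k\to M_j$ by sending $y$ to $w_1\pi^M_{k_1,j}(z)$ where $f_{k_1}(z)=w_2\pi^N_{k,k_1}(y)$; the $w_1$-factor makes $g$ well-defined despite the ambiguity in $z$. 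A short check gives $g\circ f_k = w_1w_2\,\pi^M_{k,j}$ and $f_j\circ g = w_1w_2\,\pi^N_{k,j}$. Since $I^2=I$, this yields, for each $w\in I$ and $j$, some $k$ and $g\colon N_k\to M_j$ with $gf_k=w\pi^M_{k,j}$ and $f_jg=w\pi^N_{k,j}$. Now apply $F$: $R$-linearity preserves these identities, so $F(g)F(f_k)=wF(\pi^M_{k,j})$ and $F(f_j)F(g)=wF(\pi^N_{k,j})$. From the first identity, any $x\in\aker F(f_k)$ satisfies $wF(\pi^M_{k,j})(x)=0$, so the transition on kernels is killed by $w$; from the second, $w\cdot\aim\bigl(F(N_k)\to F(N_j)\bigr)\subseteq \aim F(f_j)$, so the transition on cokernels is killed by $w$. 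This is exactly what your reduction requires, and it is the device your proposal is missing. With it in hand, your triangle argument and Facts 1--2 finish the proof.
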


\subsection{Perfectoid algebras}\label{ssn-PerfectoidAlgebra}
We will freely use the language of perfectoid spaces \cite{Scholze2012}. Throughout this paper we will always work in the following situation: Let $A$ be a complete unramified regular local ring of mixed characteristic $p$ that has an $F$-finite residue field $k$. By Cohen's structure theorem $A\cong V\ps{\nseq{2}{d}}$ where $V$ is the coefficient ring of $A$. Let $k^{\pc}$ be the perfect closure of $k$ and $W(k^{\pc})$ be the Witt vectors of $k^{\pc}$. Let $A_0$ be the $p$-adic completion of $A\otimes_V W(k^{\pc})$.

Let $\kc$ be the $p$-adic completion of $W(k^{\pc})[\pir{p}]:=\cup_{i=1}^{\infty} W(k^{\pc})[p^{1/p^i}]$ and $K=\kc[\frac{1}{p}]$. Then $K$ is a perfectoid field with $\kc$ its valuation ring. An element $\pi\in \kc$ that satisfies $\abs{p}\leqslant\abs{\pi}< 1$ is called a \emph{pseudo-uniformizer}. All theorems we cite in this section work for any choice of pseudo-uniformizer (usually we choose $\pi=p$). Let $A_{\infty,0}$ be the $p$-adic completion of $A_0[\pir{p},\pir{x_2},\ldots,\pir{x_d}]$. Then $A_{\infty,0}$ is an integral perfectoid $\kc$-algebra, and $A_{\infty,0}[\frac{1}{p}]$ is a perfectoid $K=\kc [\frac{1}{p}]$-algebra.

\begin{rmk} 
  We note that $A_{\infty,0}$ is also referred to as a perfectoid $\kc$-algebra (without ``integral''). The difference is very technical and will not affect any conclusion in our proofs. Explicitly, a perfectoid $\kc$-algebra $\cA$ is a $\pi$-adically complete $\kc$-algebra flat over $\kc$, and the map $\cA/\pi^{1/p}\rightarrow \cA/\pi$ is an isomorphism. Let $\cA_{*}$ be the set of elements in $\cA[\frac{1}{\pi}]$ that are $(\pir{\pi})$-almost in $\cA$, i.e., $\cA_{*}=\set{a\in\cA[\frac{1}{\pi}]\vert \pir{\pi} a \in \cA}$. By definition we have $\cA\alm{\cong}\cA_{*}$ for any perfectoid $\kc$-algebra $\cA$. An integral perfectoid $\kc$-algebra $\cA$ is a perfectoid $\kc$-algebra such that $\cA\cong \cA_{*}$ (an honest isomorphism). Since we always work in the $\pir{\pi}$-almost world, this difference will not affect anything.
\end{rmk}

We next state a result of Andr\'e \cite[Section 2.5]{Andre2018d} in a form we need. See also \cite[Theorem 1.5]{Bhatt2017} or \cite[Theorem 9.4.3]{Bhatt2017notes}.
\begin{thm}\label{m-thm-pPowerRoots}
  Let $\vr{A}$ be an integral perfectoid $\kc$-algebra and let $\pi$ be a pseudo-uniformizer of $\kc$. Let $g\in \vr{A}$ be an element. Then there exists a map $\vr{A}\rightarrow \vr{B}$ of integral perfectoid $\kc$-algebras that is almost faithfully flat modulo $\pi$ such that the element $g$ admits a compatible system of $p$-power roots $g^{1/p^k}$ in $\vr{B}$.
\end{thm}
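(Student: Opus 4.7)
The plan is to realize $\vr{B}$ by formally adjoining a compatible system of $p$-power roots of $g$ and then almost-perfectoidizing the result. First I would form the filtered $\vr{A}$-algebra $\vr{A}[T^{1/p^\infty}] = \bigcup_n \vr{A}[T^{1/p^n}]$, $\pi$-adically complete it to $\vr{A}\langle T^{1/p^\infty}\rangle$, and observe that this is an integral perfectoid $\kc$-algebra: the relative Frobenius induces an isomorphism $\vr{A}\langle T^{1/p^\infty}\rangle/\pi^{1/p} \to \vr{A}\langle T^{1/p^\infty}\rangle/\pi$ because the analogous statement holds for $\vr{A}$ and the new generators $T^{1/p^n}$ already admit the required $p$-th roots. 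The candidate $\vr{B}$ is then the almost perfectoidization of the $\pi$-adic completion of the quotient $\vr{A}\langle T^{1/p^\infty}\rangle/(T - g)$. By construction, the images of $T^{1/p^k}$ in $\vr{B}$ provide a compatible system of $p$-power roots of $g$, since $T = g$ in $\vr{B}$.

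Next I would verify that $\vr{B}$ is actually integral perfectoid and that $\vr{A}\to \vr{B}$ is almost faithfully flat modulo $\pi$. The cleanest route is to transport everything to the tilt: the tilt $\vr{A}^\flat$ of $\vr{A}$ is a perfect algebra of characteristic $p$, and the element $g$ determines, up to $\pir{\pi}$-almost ambiguity, an element $g^\flat\in \vr{A}^\flat$. The tilted candidate $\vr{B}^\flat$ admits an explicit description as (a completion of) the perfection of $\vr{A}^\flat[T]/(T - g^\flat)$. In characteristic $p$ this extension is manifestly almost faithfully flat over $\vr{A}^\flat$: before perfecting, it is obtained by killing a single degree-one polynomial in a free polynomial algebra, and taking the perfection (the colimit along Frobenius) preserves almost faithful flatness since Frobenius is flat on perfect rings and filtered colimits are exact. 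The tilting equivalence for integral perfectoid algebras then transports this back to the desired map $\vr{A}/\pi\to \vr{B}/\pi$.

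The main obstacle is showing that imposing the relation $T = g$ does not destroy the perfectoid property and, in particular, that $\vr{B}$ is nonzero. The element $T - g$ is not in general a regular element in any way compatible with the perfectoid structure, so one cannot simply take a naive quotient. One has to pass through the tilt or invoke André's perfectoidization functor, which produces a genuine integral perfectoid algebra from $\vr{A}\langle T^{1/p^\infty}\rangle/(T - g)$ while only modifying it $\pir{\pi}$-almost. This is precisely the substantive content of the perfectoid Abhyankar lemma of André (as reformulated by Bhatt), and the theorem is its direct consequence after unwinding the construction.
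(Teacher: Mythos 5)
The paper itself does not prove \cref{m-thm-pPowerRoots}; it is cited directly from Andr\'e's work \cite[Section 2.5]{Andre2018d} (see also \cite[Theorem 1.5]{Bhatt2017}, \cite[Theorem 9.4.3]{Bhatt2017notes}). So there is no in-paper proof to compare against, but the proposal has concrete problems on its own terms.

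The central step of your argument --- that the tilt of $\vr{B}$ can be identified with (a completion of) the perfection of $\vr{A}^\flat[T]/(T - g^\flat)$ --- does not hold, and if it did it would prove too much. Since $\vr{A}^\flat$ is perfect of characteristic $p$, any lift $g^\flat$ already has a unique compatible system of $p$-power roots $g^{\flat,1/p^n}$ in $\vr{A}^\flat$, and $(T^{1/p^n} - g^{\flat,1/p^n})^{p^n} = T - g^\flat$, so passing to the perfection forces $T^{1/p^n} = g^{\flat,1/p^n}$ for all $n$; the perfection of $\vr{A}^\flat[T^{1/p^\infty}]/(T - g^\flat)$ collapses to $\vr{A}^\flat$ itself. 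Under the tilting equivalence this would make $\vr{A}\to\vr{B}$ an almost isomorphism, and then $g^{1/p^n}\in\vr{B}$ together with $\vr{A}_*=\vr{A}$ would force $g^{1/p^n}\in\vr{A}$ for all $n$ --- i.e., the theorem would be vacuous. The deeper issue is that the characteristic-zero relation $T - g$ has no canonical characteristic-$p$ shadow $T - g^\flat$ such that ``tilt of the quotient equals quotient of the tilt''; the element $g$ only determines a class in $\vr{A}/\pi$, any lift $g^\flat$ to $\vr{A}^\flat$ is non-canonical, and $(g^\flat)^\sharp$ need not equal $g$. So the tilt-side almost faithful flatness you establish is for the wrong object, and the genuinely hard verification --- that the untilted quotient-completion is nonzero, integral perfectoid, and almost faithfully flat mod $\pi$ --- is not carried out.

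The closing claim that the statement ``is precisely the substantive content of the perfectoid Abhyankar lemma'' also misidentifies which result is in play. In this paper, \cref{m-thm-pPowerRoots} is Andr\'e's flatness lemma from \cite{Andre2018d}, whereas the Perfectoid Abhyankar Lemma is the separate \cref{m-thm-PerfectoidAbhyankar} cited from \cite{Andre2018l}; the former is an input used \emph{alongside} the latter (to manufacture compatible $p$-power roots of the discriminant $g$ so that the Abhyankar lemma applies), not a corollary of it. A correct proof must grapple directly with the fact that $T-g$ is not part of a regular sequence compatible with the perfectoid structure on $\vr{A}\langle T^{1/p^\infty}\rangle$; this is handled in \cite{Bhatt2017,Bhatt2017notes} via the quantitative Riemann Hebbarkeitssatz and a careful analysis of rational localizations, not by a naive tilt of the relation.
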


 We need this compatible system of $p$-power roots of $g$ to make use of the following remarkable result of Andr\'e, which is referred to as the ``Perfectoid Abhyankar Lemma'' \cite[Theorem 0.3.1]{Andre2018l}. Again, we rephrase it into a form that suits our objectives. Here, for any perfectoid $K$-algebra $\cA$, we use $\vr{A}$ to denote its ring of power-bounded elements, i.e., elements whose powers form a bounded subset in $\cA$. The ring $\vr{A}$ is a perfectoid $\kc$-algebra if $\cA$ is a perfectoid $K$-algebra.
\begin{thm}\label{m-thm-PerfectoidAbhyankar}
  Let $\vr{A}$ be a perfectoid $\kc$-algebra, and $\cA$ a perfectoid $K$-algebra. Suppose that $g\in \vr{A}$ is a nonzerodivisor that admits a compatible system of $p$-power roots of $g$. Let $\cB'$ be a finite \'etale $\cA[\frac{1}{g}]$-algebra. Then
  \begin{enumerate}
  \item There exists a larger perfectoid algebra $\cB$ between $\cA$ and $\cB'$ such that the inclusion $\cA\rightarrow \cB$ is continuous. We have $\cB[\frac{1}{g}]=\cB'$, and $\vr{B}$ is contained in the integral closure of $\vr{A}$ and this inclusion is a $\pir{(pg)}$-almost isomorphism.
  \item For any $m\in \NN$, $\vr{B}/p^m$ is $\pir{(pg)}$-almost finite \'etale over $\vr{A}/p^m$.
  \end{enumerate}
\end{thm}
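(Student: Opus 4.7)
The plan is to follow Andr\'e's approach in \cite{Andre2018l}, using Scholze's tilting equivalence to reduce to characteristic $p$, where Frobenius affords much better control over finite \'etale covers. The compatible system $(g^{1/p^k})_{k\geqslant 0}$ inside $\vr{A}$ is precisely the datum needed to produce a tilt $g^{\flat}\in \vr{A}^{\flat}$, and because $g$ is a nonzerodivisor the tilt $g^{\flat}$ is again a nonzerodivisor in the (perfect) characteristic-$p$ ring $\vr{A}^{\flat}$.

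First I would transport the \'etale cover to characteristic $p$. The tilting equivalence identifies perfectoid $K$-algebras with perfectoid $K^{\flat}$-algebras, and a theorem of Scholze--Kedlaya--Liu identifies finite \'etale $\cA[\tfrac{1}{g}]$-algebras with finite \'etale $\cA^{\flat}[\tfrac{1}{g^{\flat}}]$-algebras. Let $\cB'^{\flat}$ denote the tilt of $\cB'$, and define $\vr{B}^{\flat}$ as the $\pi$-adic completion of the integral closure of $\vr{A}^{\flat}$ in $\cB'^{\flat}$. One then checks $\vr{B}^{\flat}$ is integral perfectoid: in characteristic $p$ this is equivalent to Frobenius being bijective modulo the pseudo-uniformizer, which follows because Frobenius commutes with integral closure inside the perfect ambient algebra $\cB'^{\flat}$.

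The technical heart, and the principal obstacle, is establishing the $\pir{(pg)}$-almost finite \'etaleness of $\vr{A}^{\flat}\to \vr{B}^{\flat}$. The $p$-part is the classical Faltings--Scholze almost purity theorem; the novelty of the Perfectoid Abhyankar Lemma is the $g$-part, which handles ramification along the divisor $\{g=0\}$. Andr\'e's mechanism is to work through the tower $\vr{A}^{\flat}\subseteq \vr{A}^{\flat}[(g^{\flat})^{1/p^k}]$: each additional $p$-th root absorbs a factor of $(g^{\flat})^{(p-1)/p}$ in the discriminant of $\vr{B}^{\flat}$ over $\vr{A}^{\flat}[(g^{\flat})^{1/p^k}]$ computed via the trace pairing, so in the limit $k\to\infty$ the discriminant becomes $\pir{g^{\flat}}$-almost trivial. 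Combined with the $p$-part, this gives $\pir{(pg)^{\flat}}$-almost finite \'etaleness, and untilting returns the perfectoid $\kc$-algebra $\vr{B}$ with the asserted almost isomorphism to the integral closure of $\vr{A}$ inside $\cB'$. The continuity of $\cA\to\cB$ in (1) is intrinsic to the perfectoid construction, while (2) follows immediately from the global almost-\'etale statement by reducing modulo $p^m$.
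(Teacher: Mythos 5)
The paper does not prove this theorem: it is imported verbatim (modulo rephrasing) from Andr\'e's \emph{Le lemme d'Abhyankar perfecto\"ide} \cite[Theorem 0.3.1]{Andre2018l}, and the paper treats it as a black box. So there is no ``paper's own proof'' to compare against; what you have written is a high-level sketch of Andr\'e's original argument.

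As a sketch of Andr\'e's strategy, your outline is broadly faithful: tilt to characteristic $p$ via Scholze's equivalence, use the Scholze--Kedlaya--Liu identification of finite \'etale covers of $\cA[\frac{1}{g}]$ with those of $\cA^{\flat}[\frac{1}{g^{\flat}}]$, form the completed integral closure on the tilted side, and establish $\pir{(pg)^{\flat}}$-almost finite \'etaleness before untilting. The central mechanism --- that adjoining the tower $(g^{\flat})^{1/p^k}$ absorbs the ramification along $\{g=0\}$, in analogy with the classical Abhyankar lemma --- is also the right guiding picture. However, what you present as ``the technical heart'' is exactly where your proposal stops being a proof. The claim that each successive $p$-th root ``absorbs a factor of $(g^{\flat})^{(p-1)/p}$ in the discriminant,'' so that the discriminant becomes $\pir{g^{\flat}}$-almost trivial in the limit, is the conclusion one wants, not an argument for it: it presupposes precisely the ramification control over the non-noetherian, non-normal rings $\vr{A}^{\flat}[(g^{\flat})^{1/p^k}]$ that Andr\'e has to work to establish. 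His actual proof does not run through a naive discriminant computation; it requires decompletion to a tower of topologically finite-type rings where Gabber--Ramero's almost purity over a deeply ramified base can be invoked, a careful uniformity statement across the tower (so that the almost-\'etale constants do not degrade as $k\to\infty$), and a separate argument that the completed union remains integral perfectoid and agrees $\pir{(pg)}$-almost with the integral closure of $\vr{A}$ in $\cB'$. None of that is supplied here. In short: this is a correct road map of the approach, but the one step you flag as essential is asserted rather than proved, and filling it in constitutes essentially the whole of Andr\'e's paper.
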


Typically, one has a complete local domain $R$ module-finite over $A$. One often starts with $R$ and chooses $A$. We can choose $g\in A$ to be a discriminant of $R$ over $A$. Thus $R_g$ is finite \'etale over $A_g$. We apply \cref{m-thm-pPowerRoots} to $A_{\infty,0}$ and $g\in A$ to obtain an integral perfectoid $\kc$-algebra $A_{\infty}$ that contains a compatible system of $p$-power roots of $g$. Then $R\otimes A_\infty [\frac{1}{g}]$ is finite \'etale over $A_\infty[\frac{1}{g}]$. The way we use \cref{m-thm-PerfectoidAbhyankar} is by setting $\vr{A}=A_\infty, \cA=A_\infty[\frac{1}{p}],\cB'=R\otimes \cA_\infty[\frac{1}{pg}]$.

\section{\texorpdfstring{$p$}{p}-Colon-Capturing}\label{sn-p-ColonCapturing}
Let $R$ be a $d$-dimensional complete local domain of mixed characteristic $p$. We will define the $p$-colon-capturing property (\cref{m-defn-pcc}) and then start to prove that $\epf$ satisfies this property (\cref{m-thm-pColonCap}).

Let us discuss the behavior of the $\epf$ closure in $R^+$. For any ideal $I\subseteq R$, the $\epf$ closure of $IR^+$ in $R^+$ is the set of elements
\[
  \left\{ u\in R^+\, |\, \exists c\in R-\set{0}, \quad c^{\varepsilon}u\in IR^++p^NR^+ ,\quad \forall N\in\NN, \varepsilon\in\QQ^+\right\}.
\]

\begin{rmk}
  One can also use $c\in R^+$ instead of $c\in R$, i.e.,
  \[
    \left(IR^+\right)^{\epf}=\left\{ u\in R^+\, |\, \exists c\in R^+-\set{0}, \quad c^{\varepsilon}u\in IR^++p^NR^+ ,\quad \forall N\in\NN, \varepsilon\in\QQ^+\right\}.
  \]
  Because if some $c$ works, since $c$ is integral over $R$, it has a nonzero multiple $cs\in R$, and $r=cs$ will work as well.
\end{rmk}

We have an easy observation.
\begin{lem}\label{m-lem-EpfInAbsIntClosure}
  Let $R$ be an integral domain of mixed characteristic $p$. Then for any ideal $I\subseteq R$, we have
  \[
    (IR^+)^{\epf} = \bigcup (IS)^{\epf} \quad \text{for all $S\subseteq R^+$ module-finite over $R$.}
  \]
\end{lem}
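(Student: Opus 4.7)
The plan is to verify both containments directly from \cref{i-def-epf}, using the single structural observation that for any ring $S$ with $R \subseteq S \subseteq R^+$ which is module-finite over $R$, the absolute integral closure $S^+$ coincides with $R^+$. This is because $S$ and $R$ share the same fraction field up to algebraic extension, so any algebraic closure of $\fr(S)$ is also an algebraic closure of $\fr(R)$, and integral closures agree. Consequently, for such $S$ the set $(IS, p^N)S^+$ equals $IR^+ + p^N R^+$, so the $\epf$-condition for $u \in S$ computed over $S$ uses exactly the same containments as the one computed over $R^+$.

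For the containment $\bigcup (IS)^{\epf} \subseteq (IR^+)^{\epf}$, I would take $u \in (IS)^{\epf}$ for some module-finite $S$, producing a witness $c \in S - \set{0}$ with $c^{\varepsilon} u \in IS \cdot S^+ + p^N S^+$ for all $\varepsilon \in \QQ^+, N \in \NN$. By the identification $S^+ = R^+$, this reads $c^{\varepsilon}u \in IR^+ + p^N R^+$. Since $c \in R^+ - \set{0}$, the remark immediately preceding the lemma (which upgrades a witness in $R^+$ to one in $R$) shows $u \in (IR^+)^{\epf}$.

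For the reverse containment, suppose $u \in (IR^+)^{\epf}$, so there exists $c \in R - \set{0}$ with $c^{\varepsilon} u \in IR^+ + p^N R^+$ for all $\varepsilon, N$. Since $u \in R^+$ is integral over $R$, the ring $S := R[u]$ is module-finite over $R$ and sits inside $R^+$, so $S^+ = R^+$. Both $u$ and $c$ lie in $S$, and the containment $c^{\varepsilon} u \in IR^+ + p^N R^+ = IS \cdot S^+ + p^N S^+$ is exactly the definition of $u \in (IS)^{\epf}$. Hence $u$ lies in the union.

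The only real content is the identification $S^+ = R^+$; once this is in hand, everything reduces to unwinding definitions, so I do not expect any genuine obstacle beyond recording the witness-transfer carefully.
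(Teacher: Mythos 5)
Your proof is correct and follows essentially the same route as the paper: both containments come down to the observation that for intermediate module-finite $R \subseteq S \subseteq R^+$ one has $S^+ = R^+$, so the $\epf$ condition over $S$ and over $R^+$ test the same containments, with the forward direction handled by $S := R[u]$. You simply make explicit the identification $S^+ = R^+$ and the witness transfer that the paper treats as immediate.
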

\begin{proof}
  The containment $\supseteq$ is obvious. For the converse direction, suppose $u\in (IR^+)^{\epf}$. Then by definition we have $c^{\varepsilon}u \in (I,p^N)R^+$ for some $c\in R$. Since $u$ is algebraic over $R$, there is some module-finite extension $S$ of $R$ such that $u\in S$, and then \cref{i-def-epf} implies that $u\in (IS)^{\epf}$.
\end{proof}

We give the definition of our key property, $p$-colon-capturing.
\begin{defn}\label{m-defn-pcc}
  Let $R$ be a $d$-dimensional complete local domain of mixed
  characteristic $p$. Let $\seq{n}$ be part of a system of parameters
  of $R$. We say that $\seq{n}$ satisfies \emph{$p$-colon-capturing}
  if there is some fixed positive integer $N_0$ such that for all
  integers $N\geqslant N_{0}$ we have
  \[
    \left(\seq{n-1},p^N\right):_{R^+}x_n\subseteq
    \left(\left(\seq{n-1}, p^{N-N_0}\right)R^+\right)^{\epf}.
  \]
\end{defn}

The main theorem we aim to prove in this section is stated below.
\begin{thm}\label{m-thm-pColonCap}
  Let $R$ be a complete local domain of mixed characteristic $p$ with an $F$-finite residue field. Then all systems of parameters in $R$ satisfy $p$-colon-capturing.
\end{thm}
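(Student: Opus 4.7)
The plan is to reduce the statement to an almost colon-capturing statement inside the integral perfectoid algebra $A_\infty$ of \cref{ssn-PerfectoidAlgebra} and then descend back to $R^+$ via the perfectoid Abhyankar lemma. First, I would pick a complete unramified regular local subring $A\cong V\ps{y_2,\ldots,y_d}\hookrightarrow R$ over which $R$ is module-finite. Using the $F$-finiteness of the residue field together with a standard Noether-normalization manoeuvre, I would arrange that $x_1,\ldots,x_n$ lies inside $A$ and forms part of a regular system of parameters of $A$ (with care for whether one of the $x_i$ is a unit multiple of $p$). I would then pick a discriminant $g\in A$ so that $R[1/g]$ is finite étale over $A[1/g]$, build $A_{\infty,0}$ as in \cref{ssn-PerfectoidAlgebra}, and apply \cref{m-thm-pPowerRoots} to enlarge it to an integral perfectoid $\kc$-algebra $A_\infty$ containing a compatible system of $p$-power roots of $g$.

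The heart of the proof is to establish a fixed integer $N_0$, depending only on $A$, $A_\infty$, and $g$, such that for every $N\geqslant N_0$ and every $\varepsilon\in\QQ^+$,
\[
  (pg)^{\varepsilon}\cdot\bigl((x_1,\ldots,x_{n-1},p^N):_{A_\infty} x_n\bigr)\subseteq (x_1,\ldots,x_{n-1},p^{N-N_0})A_\infty.
\]
At each finite level $A_0[p^{1/p^k},y_2^{1/p^k},\ldots,y_d^{1/p^k}]$ the parameters $p,y_2,\ldots,y_d$ form a regular sequence (these are literally distinct variables adjoined to a DVR), so the Koszul complex is acyclic and the ordinary colon relation $(x_1,\ldots,x_{n-1},p^N):x_n=(x_1,\ldots,x_{n-1},p^N)$ holds. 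The almost-pro-isomorphism lemma \cref{m-lem-AlmostProIsomoprhism}, applied to the projective system of these finite levels as $k\to\infty$, allows the acyclicity to descend to the $p$-adic completion defining $A_\infty$ up to a $(pg)^{1/p^\infty}$-almost error. The delicate point, and the main obstacle of the argument, is to show that the $p$-adic completion introduces only a bounded shift $N_0$ of $p$-powers that is independent of $N$.

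Next I would ascend the almost colon-capturing from $A_\infty$ to the module-finite extension $R\otimes_A A_\infty$. Applying \cref{m-thm-PerfectoidAbhyankar} with $\vr{A}=A_\infty$, $\cA=A_\infty[1/p]$, and $\cB'=R\otimes_A\cA[1/g]$ produces an integral perfectoid $\kc$-algebra $\vr{B}$ that is $(pg)^{1/p^\infty}$-almost finite étale over $A_\infty$ modulo every $p^m$, and in particular almost faithfully flat. A standard almost flat-base-change argument then transfers the displayed containment from $A_\infty$ to $\vr{B}$ with the same shift $N_0$: any relation $ux_n\in(x_1,\ldots,x_{n-1},p^N)\vr{B}$ pulls back to a relation over $A_\infty$ after multiplying by $(pg)^{\varepsilon/2}$, to which the previous step applies.

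Finally, given $u\in R^+$ with $ux_n\in(x_1,\ldots,x_{n-1},p^N)R^+$, \cref{m-lem-EpfInAbsIntClosure} reduces everything to checking the $\epf$-containment in a module-finite extension $S\subseteq R^+$ of $R$ containing $u$; after enlarging $S$ to include the relevant roots of a discriminant, $S$ maps into $\vr{B}$ with $(pg)^{1/p^\infty}$-almost controlled kernel. The containment in $\vr{B}$ established above then yields $(pg)^{\varepsilon}u\in(x_1,\ldots,x_{n-1},p^{N-N_0})R^++p^M R^+$ for any prescribed $M$, which is exactly the $\epf$-condition of \cref{i-def-epf} with witness $c=pg\in R-\set{0}$. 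Carefully tracking the $(pg)^{1/p^\infty}$-almost errors through the ascent to $\vr{B}$ and the descent to $R^+$, so that they can all be absorbed into a single $\epf$-witness independent of $N$ and $\varepsilon$, is the bookkeeping challenge of the final paragraph.
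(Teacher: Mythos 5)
Your proposal captures the right global skeleton — pass through an almost flat $A_\infty$-type algebra, obtain an almost colon-capturing with a uniform shift $N_0$, ascend via the perfectoid Abhyankar lemma, then descend to $R^+$ via a module-finite $S$ and \cref{m-lem-EpfInAbsIntClosure}. But there is a genuine gap right at the start: the claimed ``Noether-normalization manoeuvre'' that places $x_1,\ldots,x_n$ inside $A$ as part of a \emph{regular} system of parameters of $A$ is not available in general. Because $A$ is an unramified complete regular local ring, $p$ is forced to be one of the regular parameters of $A$; to make $x_1,\ldots,x_n$ sit among the others you would need $p,x_1,\ldots,x_n$ to be a partial system of parameters of $R$, i.e.\ $\hght(p,x_1,\ldots,x_n)=n+1$. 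That can fail without any $x_i$ being a unit multiple of $p$: it fails exactly when $p$ lies in some height-$n$ minimal prime of $(x_1,\ldots,x_n)$. For instance, in $R=V\ps{y,z}/(y^2-pz)$ with $x_1=y$, the ideal $(y,p)$ is a height-one prime containing both, so $p,y$ is not a partial s.o.p.\ and $y$ cannot be made a regular parameter of any admissible $A$. The paper closes precisely this gap with \cref{m-lem-R-pColonCap}: a double induction on $n$ and on the number of parameters not lying in $A$, which replaces an ``offending'' $y_n\notin A$ by a suitable $w_n\in (y_1,\ldots,y_n)R\cap A$ and, in the remaining case, permutes and applies the inductive hypotheses with two shifts $N_1,N_2$. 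Nothing in your proposal replaces this step.

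Two secondary issues. First, you propose to extract the uniform $N_0$ by running \cref{m-lem-AlmostProIsomoprhism} over ``finite levels'' $A_0[p^{1/p^k},y_i^{1/p^k}]$ and flag the $p$-adic completion as ``the main obstacle''; but this is both the wrong tool and the wrong worry. The tower of finite levels is a \emph{directed} (injective) system, not a projective one, so the almost-pro-isomorphism lemma does not apply as stated; and the paper avoids the issue entirely. In \cref{m-lem-A-pColonCap} the integer $N_0$ comes from Noetherianity of $A$ alone, namely from $(x_1,\ldots,x_n)A:_A p^\infty=(x_1,\ldots,x_n)A:_A p^{N_0}$, after which a short direct manipulation transfers the inclusion to any $(pg)^{1/p^\infty}$-almost flat $A$-algebra $T$ — no Koszul acyclicity of finite levels and no control of the completion are needed. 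Second, your final paragraph uses $c=pg$ as the $\epf$-witness, but the paper's definition requires $c^\varepsilon u\in (I,p^N)R^+$ with a \emph{compatible system of rational powers} of $c$, which is exactly what the passage through $S^{pg}\alm{\cong}\cp{S}$ in \cref{m-lem-EpfSufficientCondition} supplies; asserting the conclusion directly as you do skips the two lemmas (\cite[Lemma 3.1, 3.3]{Heitmann2018}) that make ``$(pg)^{1/p^\infty}$-almost in $I\mbc$'' actually imply $\epf$-membership.
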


In order to prove the main theorem, we need two lemmas.
\begin{lem}\label{m-lem-A-pColonCap}
  Let $A$ be a regular complete local domain of mixed characteristic $p$. Let $\seq{d}$ be a system of parameters in $A$. Since $A$ is noetherian, we may choose some $N_0$ such that $ (\seq{n})A:_A p^{\infty}=(\seq{n})A:_A p^{N_0}$. Let $T$ be a $\pir{(pg)}$-almost flat $A$-algebra. Then for all $N\geqslant N_{0}$ and $1\leqslant n\leqslant d$, we have
  \[
    \left( \seq{n-1},p^{N}\right)T:_{T} x_{n}\alm{\subseteq}
    \left(\seq{n-1},p^{N-N_{0}}\right)T.
  \] 
\end{lem}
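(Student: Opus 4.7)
The plan is to split the argument into a noetherian step inside $A$ followed by an almost-flatness transfer to $T$. First, I would establish the analogous colon containment in $A$ itself, namely
\[
  (\seq{n-1},p^{N})A :_{A} x_{n}\subseteq (\seq{n-1},p^{N-N_{0}})A,
\]
and then use that $\tor_{1}^{A}(T,-)$ is $\pir{(pg)}$-almost zero to deduce the corresponding almost containment after tensoring with $T$.

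For the statement inside $A$, the crucial input is that $A$ is regular and hence Cohen--Macaulay, so $\nseq{1}{d}$ is a regular sequence in any order. Working modulo $(\nseq{1}{n-1})$, I would reduce to showing that in $\bar A := A/(\nseq{1}{n-1})A$ the relation $\bar a\, x_{n} = p^{N}\bar b$ forces $\bar a \in p^{N-N_{0}}\bar A$. The element $\bar b$ lies in $(x_{n})\bar A :_{\bar A} p^{N}$, which by the defining property of $N_{0}$ and the hypothesis $N \geqslant N_{0}$ coincides with $(x_{n})\bar A :_{\bar A} p^{N_{0}}$. Writing $p^{N_{0}}\bar b = x_{n}\bar c$, multiplying by $p^{N-N_{0}}$, and cancelling the nonzerodivisor $x_{n}$ then yields $\bar a = p^{N-N_{0}}\bar c$, as required.

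For the transfer to $T$, I would apply $T \otimes_{A} -$ to the short exact sequence
\[
  0 \rightarrow A/\bigl((\seq{n-1},p^{N})A :_{A} x_{n}\bigr) \xrightarrow{x_{n}} A/(\seq{n-1},p^{N})A \rightarrow A/(\seq{n},p^{N})A \rightarrow 0
\]
and run the resulting long exact sequence. The hypothesis that $T$ is $\pir{(pg)}$-almost flat over $A$ ensures that $\tor_{1}^{A}(T, A/(\seq{n},p^{N}))$ is $\pir{(pg)}$-almost zero, so the kernel of multiplication by $x_{n}$ on $T/\bigl((\seq{n-1},p^{N})A :_{A} x_{n}\bigr)T$ is almost zero. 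Unwinding this kernel gives
\[
  (\seq{n-1},p^{N})T:_{T}x_{n} \alm{\subseteq} \bigl((\seq{n-1},p^{N})A :_{A} x_{n}\bigr)T,
\]
which combined with the noetherian containment of the second paragraph yields the conclusion. I expect the only item requiring care to be the identification of the kernel of multiplication by $x_{n}$ with the image of the $\tor_{1}$ term; the computation inside $A$ is routine given the regularity of $A$ and the choice of $N_{0}$.
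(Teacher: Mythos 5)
Your proof is correct, but it is organized differently from the paper's. The paper never isolates the Noetherian containment $(\seq{n-1},p^{N})A:_{A}x_{n}\subseteq(\seq{n-1},p^{N-N_{0}})A$; instead it transfers the two relevant colon computations to $T$ separately --- namely $(\seq{n})T:_{T}p^{\infty}\alm{=}(\seq{n})T:_{T}p^{N_{0}}$ and $(\seq{n-1})T:_{T}x_{n}\alm{\subseteq}(\seq{n-1})T$, both consequences of almost flatness --- and then runs an element chase entirely inside $T$: from $ux_{n}-wp^{N}\in(\seq{n-1})T$ it deduces that $w$ is almost in $(\seq{n})T:_{T}p^{N_{0}}$, writes $\epg wp^{N_{0}}=v_{\varepsilon}x_{n}+(\cdots)$ for each $\varepsilon$, substitutes back, and cancels $x_{n}$ almost. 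Your version front-loads all of the commutative algebra into $A$, where the regular-sequence and $N_{0}$ arguments are exact rather than almost, and then performs a single base change via the $\tor_{1}$ term of the short exact sequence $0\to A/(I:_{A}x_{n})\to A/I\to A/(I,x_{n})\to 0$ with $I=(\seq{n-1},p^{N})A$. Both arguments ultimately rest on the same input (colon ideals base-change almost correctly along $A\to T$), but yours is cleaner for this particular statement since it avoids the $\varepsilon$ bookkeeping. What your route does not buy is reusability: in \cref{m-lem-R-pColonCap} the parameters $y_{i}$ live in $R$ rather than in the regular ring $A$, so the computation cannot be pushed down to $A$ and the $\varepsilon$-chase in $T$ becomes unavoidable; the paper's proof of the present lemma is written in the style that is needed there.
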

\begin{proof}
  Since $T$ is $\pir{(pg)}$-almost flat over $A$,
  \begin{equation*}
    (\seq{n})T:_{T}p^{\infty}\alm{=}\left( (\seq{n})A:_Ap^{\infty} \right)T=\left(  (\seq{n})A:_Ap^{N_0}\right)T\alm{=}(\seq{n})T:_{T}p^{N_0}.
  \end{equation*}
  Let $N\geqslant N_{0}$ be some arbitrary integer and let $u$ be an arbitrary element in
  $\left(\seq{n-1},p^N\right)T:_{T}x_n$. We have 
  \begin{equation}\label{eq:m-pcc-1}
    ux_n - wp^N  \in (\seq{n-1})T.
  \end{equation}
  for some $w\in T$, which implies that $w \in \left( \seq{n}
  \right)T:_{T} p^N $. Note that since
  \[ \left( \seq{n} \right)T:_{T} p^N
    \subseteq \left( \seq{n} \right)T:_{T} p^\infty \alm{=} \left(
      \seq{n} \right)T:_{T} p^{N_0},
  \]
  we have
  \begin{equation}\label{eq:m-pcc-11}
    w \alm{\in} \left( \seq{n} \right)T:_{T} p^{N_0}.
  \end{equation}
  Let $\varepsilon$ be an arbitrary positive
  rational number. We rewrite \eqref{eq:m-pcc-1} as
  \begin{align}
    ux_n - \left( wp^{N_0} \right)p^{N-N_0}  &\in (\seq{n-1})T \\
    \Rightarrow \epg ux_n - \left(\epg wp^{N_0} \right)p^{N-N_0}  &\in (\seq{n-1})T \label{eq:m-pcc-2}
  \end{align}
  \eqref{eq:m-pcc-11} shows that for any $\varepsilon$, $\epg wp^{N_0} \in \left( \seq{n} \right)T $. So for each $\varepsilon>0$, there is some $v_{\varepsilon}\in T$ such that $\epg wp^{N_0} - v_{\varepsilon}x_n\in \left(
    \seq{n-1} \right)T$. Combining this with \eqref{eq:m-pcc-2}, we have
  \begin{align*}
    \epg ux_n - (v_{\varepsilon}x_n)p^{N-N_0} &\in (\seq{n-1})T \\
    \Rightarrow \epg u - v_{\varepsilon}p^{N-N_0} &\in (\seq{n-1})T:_{T}x_n.
  \end{align*}
  Since $T$ is $\pir{(pg)}$-almost flat over $A$, we have $ \left(
    (\seq{n-1})T:_{T} x_n \right)\alm{\subseteq} \left( \seq{n-1}
  \right) T $. So we have
  \[
    \forall \varepsilon,\quad \epg u \alm{\in} \left(\seq{n-1},p^{N-N_0}\right)T.
  \]
  Since this is true for all $\varepsilon$, we conclude that $u\alm{\in}  \left(\seq{n-1},p^{N-N_0}\right)T$.
\end{proof}

\begin{lem}\label{m-lem-R-pColonCap}
  Let $R$ be a complete local domain that is module-finite over a regular
  complete local domain $A$, where both $R$ and $A$ are of mixed characteristic $p$.  Let $\seq[y]{n}$ be part of some
  system of parameters in $R$. 
  There exists some positive integer $N_0$ such that for any integer
  $N\geqslant N_{0}$ and any $R$-algebra $T$ that is $\pir{(pg)}$-almost
  flat over $A$,
  \[
    \left( \seq[y]{n-1},p^N \right)T:_T y_n  \alm{\subseteq} \left(
      \seq[y]{n-1},p^{N-N_0} \right)T.
  \]
\end{lem}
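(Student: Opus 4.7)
My plan is to adapt the proof of Lemma \ref{m-lem-A-pColonCap} to the current setting, at the cost of some additional work coming from the module-finiteness of $R$ over $A$. The earlier proof turned on two ingredients: a uniform $p^{\infty}$-torsion bound $(\seq{n})T:_T p^{\infty}\alm{\subseteq}(\seq{n})T:_T p^{N_0}$, and almost-regularity $(\seq{n-1})T:_T x_n\alm{\subseteq}(\seq{n-1})T$, both obtained by applying the $\pir{(pg)}$-almost flatness of $T$ over $A$ to the corresponding statements in the noetherian regular ring $A$.

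The first step is to establish analogues of these two ingredients for the $R$-parameters $\seq[y]{n}$. Since $R/A$ is module-finite, each $M_j:=R/(\seq[y]{j})R$ is a finitely generated $A$-module, so by noetherianity of $A$ the $p^{\infty}$-torsion of $M_n$ is annihilated by some fixed $p^{N_0}$; and since $\seq[y]{n}$ is part of a system of parameters of $R$, multiplication by $y_j$ is injective on $M_{j-1}$. Tensoring the short exact sequences $0\to M_n[p^{N_0}]\to M_n\to M_n/M_n[p^{N_0}]\to 0$ and $0\to M_{j-1}\xrightarrow{y_j}M_{j-1}\to M_{j-1}/y_j M_{j-1}\to 0$ (viewed as sequences of $A$-modules) with the $\pir{(pg)}$-almost flat $A$-algebra $T$ produces the desired almost statements for the $A$-tensor products $T\otimes_A M_j$.

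The technical heart is to transfer these statements from $T\otimes_A M_j$ to $T\otimes_R M_j=T/(\seq[y]{j})T$, along the natural surjection between them induced by the $R$-algebra structure of $T$. The idea is that any $p^{\infty}$-torsion or $y_j$-torsion element of $T/(\seq[y]{j})T$ admits, up to $\pir{(pg)}$-almost error, a lift of the same torsion type in $T\otimes_A M_j$, so the uniform almost-bounds on the $A$-tensor product propagate to $T/(\seq[y]{j})T$. This exploits both the $R$-algebra structure of $T$ and the fact that $M_j$ is finitely presented over $A$, and it is where the constant $N_0$ absorbs any $R/A$-related error (while remaining independent of $T$).

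Once the two $R$-analogues are in place, the remainder of the argument copies the proof of Lemma \ref{m-lem-A-pColonCap} essentially verbatim: starting from $uy_n-wp^N\in(\seq[y]{n-1})T$ one deduces $w\alm{\in}(\seq[y]{n})T:_T p^{N_0}$ from the torsion bound, manipulates (exactly as in \eqref{eq:m-pcc-2}) to obtain $\epg u-v_{\varepsilon}p^{N-N_0}\in(\seq[y]{n-1})T:_T y_n$, invokes the almost-regularity of $y_n$ on $T/(\seq[y]{n-1})T$, and finishes by letting $\varepsilon\to 0$. I expect the principal obstacle to be the transfer step in the previous paragraph, since the surjection $T\otimes_A M_j\twoheadrightarrow T/(\seq[y]{j})T$ can have non-trivial kernel; the rest is bookkeeping that parallels Lemma \ref{m-lem-A-pColonCap}.
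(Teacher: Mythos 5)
The paper's proof proceeds by a double induction quite unlike what you propose: it measures how many of the parameters $y_1,\ldots,y_n$ already lie in $A$, and in the case $y_n \notin A$ replaces $y_n$ by an element $w_n \in (y_1,\ldots,y_n)R \cap A$ that is still a parameter, using the containment $(y_1,\underline{y},p^N)T :_T y_n \subseteq (y_1,\underline{y},p^N)T :_T w_n$ (valid because $w_n\in(y_1,\underline{y},y_n)R$ and $T$ is an $R$-algebra) to descend to a sequence with more elements of $A$; the case where $y_n \in A$ but some earlier $y_i \notin A$ is handled by a separate manipulation that cyclically permutes the sequence and invokes both induction hypotheses. The base case, where every parameter lies in $A$, is exactly \cref{m-lem-A-pColonCap}. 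At no point does the paper attempt to transfer torsion or regularity statements from $T\otimes_A M_j$ to $T/(\seq[y]{j})T$.

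The step you yourself flag as the ``technical heart'' is a genuine gap, and I do not think it can be filled along the lines you propose. The surjection $T\otimes_A M_j \twoheadrightarrow T/(\seq[y]{j})T$ has a nontrivial kernel $K$ coming from the kernel of $T\otimes_A R \to T$, and almost-regularity of $y_{j+1}$ on the source does not descend to the quotient. Concretely, if $\bar{u}\in T/(\seq[y]{j})T$ satisfies $y_{j+1}\bar{u}=0$ and $u\in T\otimes_A M_j$ is any lift, then $y_{j+1}u$ lies in $K$ but need not be almost zero, so the almost-injectivity of multiplication by $y_{j+1}$ on $T\otimes_A M_j$ gives no information about $\bar u$. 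To modify $u$ by an element of $K$ so that $y_{j+1}u$ becomes almost zero, you would need multiplication by $y_{j+1}$ to be almost surjective on $K$, and you have no control over $K$ from $\pir{(pg)}$-almost flatness of $T$ over $A$ alone, since $K$ is an artifact of the $R$-algebra structure of $T$ rather than of $T$'s $A$-module structure. The same problem afflicts your proposed transfer of the $p^\infty$-torsion bound. The hypotheses you list (module-finiteness of $R/A$, noetherianity of $A$, finite presentation of $M_j$ over $A$) do not suffice to produce the lift ``of the same torsion type up to almost error'' that you invoke. You should look for a way to avoid the $T\otimes_A M_j$ versus $T\otimes_R M_j$ comparison altogether, which is what the paper's parameter-swapping induction accomplishes.
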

\begin{proof}
  Let $k$ be the number of elements in $\left\{ \seq[y]{n} \right\}$
  that are in $A$. We prove the lemma by induction on $n$ and
  $n-k$. The base cases $n=k$ for all $1\leqslant n\leqslant d$ follow from \cref{m-lem-A-pColonCap} with the same $N_0$.
  Let us choose the same $N_0$ from \cref{m-lem-A-pColonCap}. We assume that
  $n-k>0$. To simplify the notation, we write $\underline{y}$ for the sequence $y_2,\ldots,y_{n-1}$.

  If $y_n\not\in A$, then we can choose some $w_n \in (y_1,\underline{y},y_n)R\cap A$ that is not contained in any minimal prime of
  $\left( y_1,\underline{y} \right)R$ in $R$, and then $y_1,\underline{y},w_n$
  continue to be part of a system of parameters in $R$.  There is one more element of $y_1,\underline{y},w_n$ in $A$ than of
  $y_1,\underline{y},y_n$. By the induction hypothesis on $n-k$, there is some
  $N_{0}$ such that for any $N\geqslant N_{0}$ and any $R$-algebra $T$ that is $\pir{(pg)}$-almost
  flat over $A$,
  \begin{equation}\label{eq:m-pcc-3}
    \left( y_1,\underline{y},p^N \right)T:_{T}w_n \alm{\subseteq} \left( y_1,\underline{y},p^{N-N_0} \right)T.
  \end{equation}
  
  Since $T$ is an $R$-algebra, we have
  \begin{equation}\label{eq:m-pcc-4}
    \left( y_1,\underline{y},p^N \right)T:_{T} y_n = \left( y_1,\underline{y},p^N \right)T:_{T} ( y_1,\underline{y},y_n)\subseteq\left( y_1,\underline{y},p^N \right)T:_{T} (y_1,\underline{y},w_n)= \left( y_1,\underline{y},p^N \right)T:_{T}w_n.
  \end{equation}
  The result now follows from \eqref{eq:m-pcc-3} and \eqref{eq:m-pcc-4}.  

  In the remaining case, we can assume that $y_n\in A$. Without loss
  of generality, we assume that $y_1\not\in A$. By applying what we have proved
  above to the sequence $\underline{y},y_{n},y_{1}$, we know that
  there is some $N_{1}$ such that for all $N\geqslant N_{1}$ and all $R$-algebras $T$ that are $\pir{(pg)}$-almost
  flat over $A$,
  \begin{equation}\label{eq:m-pcc-5}
    \left(\underline{y},y_{n},p^{N}\right)T:_{T}y_{1}\alm{\subseteq} \left( \underline{y},y_n,p^{N-N_1} \right)T.
  \end{equation}
  Also, by applying the induction hypothesis on $n$ to the shorter sequence $\underline{y},y_n$, there is some $N_{2}$ such
  that for all $N\geqslant N_{2}$ and all $R$-algebras $T$ that are $\pir{(pg)}$-almost
  flat over $A$,
  \begin{equation} \label{eq:m-pcc-6}
    \left(\underline{y},p^{N}\right)T:_{T}y_{n}\alm{\subseteq} \left( \underline{y},p^{N-N_2}\right)T.
  \end{equation}
  
  Let $N\geqslant N_{1}+N_{2}$ be an integer. For any $u\in  \left(
    y_1,\underline{y},p^N \right)T:_{T} y_n$, we can write
  \begin{equation}\label{eq:m-pcc-7}\\
    y_nu=u_1y_1+\cdots+u_{n-1}y_{n-1}+vp^N
  \end{equation}
  for some $\seq[u]{n-1},v\in T$. Then $u_1y_1 \in \left(\underline{y},y_n,p^N\right)T$, which by \eqref{eq:m-pcc-5} implies
  that
  \[
    u_1  \alm{\in} \left( \underline{y},y_n,p^{N-N_{1}} \right)T.
  \]

  For any positive rational number $\varepsilon$, we have $\epg u_1 =
  b_2y_2+\cdots +b_ny_n+wp^{N-N_1}$ for some $b_2,\ldots,b_n,w\in T$ which depend on $\varepsilon$. Multiply \eqref{eq:m-pcc-7} by $\epg$ and
  make use of the expression of $\epg u_1$. This yields
  \begin{align*}    
    \epg y_nu &= \epg u_1y_1 +\cdots+\epg u_{n-1}y_{n-1}+\epg vp^N \\
    \Rightarrow    \epg y_n u - b_ny_1y_n &\in \left( \underline{y},p^{N-N_1} \right)T \\
    \Rightarrow    \epg u - b_ny_1 &\in \left(\underline{y},p^{N-N_1} \right)T:_{T}y_n \\
    \Rightarrow \epg u - b_ny_1 &\alm{\in} \left( \underline{y},p^{N-N_1-N_2} \right)T\quad    \text{(by \eqref{eq:m-pcc-6})}\\
    \Rightarrow   \epg u &\alm{\in} \left( y_1,\underline{y},p^{N-N_1-N_2} \right)T,
  \end{align*}
  and this is true for any positive rational number $\varepsilon$ and any $u\in  \left( y_1,\underline{y},p^N \right)T:_{T} y_n$. Hence, we can conclude that
  \[
    \left( \seq[y]{n-1},p^N \right)T:_{T} y_n\alm{\subseteq} \left(
      y_1,\ldots,y_{n-1},p^{N-N_1-N_2} \right)T.
  \]
\end{proof}

Next we discuss some perfectoid constructions that will be used in the proof of \cref{m-thm-pColonCap}.
\begin{cons}\label{m-cons-AndrePerfectoidConstruction}
  Let $A\to S$, a module-finite map of complete local domains of mixed characteristic $p$, be given where $A$ is regular and unramified with an $F$-finite residue field. We apply the same construction as in \cref{ssn-PerfectoidAlgebra} to obtain $A_{\infty,0}$. Since $A\rightarrow S$ is generically \'etale, there is some element $g$ in $A$ such that $(p,g)$ generates a height 2 ideal and $A_{pg}\rightarrow S_{pg}$ is finite \'etale. Let $A_{\infty}$ be obtained by applying \cref{m-thm-pPowerRoots} to $A_{\infty,0}$ and $g$. Then by \cref{m-thm-PerfectoidAbhyankar} (where $\cA=A_{\infty}[\frac{1}{p}]$ and $\cB'=A_{\infty}[\frac{1}{pg}]\otimes_A S$), we are able to find an $S$-algebra $\mbc$ satisfying the following properties:
  \begin{itemize}
  \item $\mbc$ is $\pir{(pg)}$-almost flat over $A$.
  \item There exists a $\pir{(pg)}$-almost map from $\mbc$ to $S^{pg}$
    where $S^{pg}$ is the integral closure of $\cp{S}$ in
    $\cp{S}[\frac{1}{pg}]$, and $\cp{S}$ is the $p$-adic completion of
    $S^+$.
  \end{itemize} 
  For proofs, see \cite[Lemma 3.8, Lemma 3.9]{Heitmann2018}. 
\end{cons}

A direct consequence of the construction above is the following lemma.
\begin{lem}\label{m-lem-EpfSufficientCondition}
  With notation as above, let $I\subseteq S $ be an ideal of $S$ and $u\in S$. If $u$ is $\pir{(pg)}$-almost in $I\mbc$, then $u\in I^{\epf}$.
\end{lem}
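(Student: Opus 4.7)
The plan is to show that $c = pg$ witnesses $u \in I^{\epf}$, in the sense of \cref{i-def-epf}. Note that $pg \in S \setminus \{0\}$ and that $(pg)^{\varepsilon}$ lies in $S^+$ for every rational $\varepsilon > 0$ because $S^+$ is the absolute integral closure of $S$. Concretely, one must verify $(pg)^{\varepsilon} u \in IS^+ + p^N S^+$ for every $\varepsilon \in \QQ^+$ and every $N \in \NN$.

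The first step is to transport the hypothesis $u \alm{\in} I\mbc$ through the $\pir{(pg)}$-almost $S$-algebra map $\mbc \to S^{pg}$ provided by \cref{m-cons-AndrePerfectoidConstruction}. For each $k$, the relation $(pg)^{1/p^k} u \in I\mbc$ maps to an analogous containment in $S^{pg}$ modulo a $\pir{(pg)}$-almost zero error, since the map is the identity on $S$ while $I, u \in S$. Multiplying by a further factor $(pg)^{\eta_1}$ with $\eta_1 > 0$ absorbs the error, giving $(pg)^{1/p^k + \eta_1} u \in IS^{pg}$, and in particular $(pg)^{1/p^k + \eta_1} u \in IS^{pg} + p^N S^{pg}$.

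The second step is to descend from $S^{pg}$ to $\cp{S}$ modulo $p^N$. The crucial input here is that the inclusion $\cp{S} \hookrightarrow S^{pg}$ becomes a $\pir{(pg)}$-almost isomorphism after reduction modulo $p^N$, a fact built into \cref{m-cons-AndrePerfectoidConstruction} through the almost purity content of \cref{m-thm-PerfectoidAbhyankar} (compare \cite[Lemmas~3.8, 3.9]{Heitmann2018}). Absorbing another small positive rational power $(pg)^{\eta_2}$ then gives $(pg)^{1/p^k + \eta_1 + \eta_2} u \in I\cp{S} + p^N \cp{S}$, and the canonical identification $S^+/p^N \cong \cp{S}/p^N$ (coming from $\cp{S}$ being the $p$-adic completion of $S^+$) upgrades this to $(pg)^{1/p^k + \eta_1 + \eta_2} u \in IS^+ + p^N S^+$.

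Finally, since $1/p^k$, $\eta_1$ and $\eta_2$ can each be chosen as arbitrarily small positive rationals, the total exponent $1/p^k + \eta_1 + \eta_2$ can be made at most any prescribed $\varepsilon \in \QQ^+$, and multiplying by $(pg)^{\varepsilon - (1/p^k + \eta_1 + \eta_2)} \in S^+$ yields $(pg)^{\varepsilon} u \in IS^+ + p^N S^+$ for the given $\varepsilon$. The main obstacle is the second step: the $\pir{(pg)}$-almost isomorphism $\cp{S}/p^N \alm{\cong} S^{pg}/p^N$ is the genuine perfectoid input, while the rest of the argument is bookkeeping with exponents that exploits the quantification over all positive rational $\varepsilon$ in the definition of the $\epf$ closure.
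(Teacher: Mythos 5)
Your argument is correct and takes essentially the same route as the paper's, unpacking the black-box citations to \cite[Lemmas 3.1 and 3.3]{Heitmann2018} into explicit $(pg)^{\eta}$ exponent bookkeeping. One small attribution to tidy up: the $\pir{(pg)}$-almost isomorphism $\cp{S}\hookrightarrow S^{pg}$ is not part of \cref{m-cons-AndrePerfectoidConstruction} and does not come from \cref{m-thm-PerfectoidAbhyankar} or almost purity; it is the separate fact \cite[Lemma 3.3]{Heitmann2018}, which rests on $S^+$ being absolutely integrally closed, so that its $p$-adic completion $\cp{S}$ is almost integrally closed in $\cp{S}[\tfrac{1}{pg}]$.
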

\begin{proof}
  Since $\mbc$ maps $\pir{(pg)}$-almostly to $S^{pg}$, we have
  $u\alm{\in} IS^{pg}$. By \cite[Lemma 3.3]{Heitmann2018} we know that
  $S^{pg}$ is $\pir{(pg)}$-almost isomorphic to $\cp{S}$. Hence
  $u\alm{\in} I\cp{S}$. Then \cite[Lemma 3.1]{Heitmann2018} finishes
  the proof.
\end{proof}

We are ready to prove our main result of this section.
\begin{proof}[Proof of \cref{m-thm-pColonCap}]
  Since $R$ is a complete local domain, by Cohen's structure theorem, there is a complete regular local domain $A$ such that $A\to R$ is a module-finite extension. So $A$ has an $F$-finite residue field. We fix this $A$ for the remainder of the proof.
  
  For any $\seq{n}$ that is part of a system of parameters, we want to
  prove that there is some $N_{0}$ such that for all $N\geqslant N_{0}$,
  \[
    \left( \seq[y]{n-1},p^N \right)R^+:_{R^+} y_n \subseteq \left(\left(
        \seq[y]{n-1},p^{N-N_0} \right)R^+\right)^{\epf}.
  \]
  We apply \cref{m-lem-R-pColonCap} to the system of parameters $\seq[y]{n}$. We learn that there is some positive integer $N_{0}$ such that for any $N\geqslant N_{0}$ and any $R$-algebra $T$ that is $\pir{(pg)}$-almost flat over $A$,
  \begin{equation}\label{eq:m-pcc-8}
    \left( \seq[y]{n-1},p^N \right)T:_T y_n  \alm{\subseteq} \left(\seq[y]{n-1},p^{N-N_0} \right)T.
  \end{equation}

  Let $u$ be an arbitrary element in $\left( \seq[y]{n-1},p^N \right)R^+:_{R^+}y_n$. Then we have
  \begin{equation}\label{eq:m-pcc-9}
    y_nu = y_1u_1+\cdots+y_{n-1}u_{n-1}+vp^N
  \end{equation}
  for some $\seq[u]{n-1},v\in R^+$. All elements here are integral over $R$. Hence, there is some module-finite extension $S$ of $R$ such that this relation holds in $S$, i.e., $u\in \left(\seq[y]{n-1},p^N \right)S:_S y_n$. Applying \cref{m-cons-AndrePerfectoidConstruction} to $A\to S$, we obtain an $S$-algebra $\mbc$ that is $\pir{(pg)}$-almost flat over $A$. $\mbc$ is also an $R$-algebra. So we set $T=\mbc$ in \eqref{eq:m-pcc-8} and obtain that
  \begin{equation}\label{eq:m-pcc-10}
    \left( \seq[y]{n-1},p^{N}\right)\mbc:_{\mbc} y_{n} \alm{\subseteq} \left(
      \seq[y]{n-1},p^{N-N_{0}}\right)\mbc
  \end{equation}
  for all $N\geqslant N_{0}$. Since the relation \eqref{eq:m-pcc-9} maps to a relation in $\mbc$, we see that $u$ is in the left-hand side of \eqref{eq:m-pcc-10}. Hence it is $\pir{(pg)}$-almost in the right-hand side of \eqref{eq:m-pcc-10}. Then, by \cref{m-lem-EpfSufficientCondition}, we know that $u\in \left(\left(\seq{n-1},p^{N-N_0}\right)S\right)^{\epf}$ for all $N\geqslant N_0$. This completes the proof, by \cref{m-lem-EpfInAbsIntClosure}.
\end{proof}


\section{Weak \texorpdfstring{$\epf$}{epf} Closure}\label{sn-WeakEPFClosure}
In this section, we develop a new closure operation, called ``weak $\epf$ closure'', and denote it by $\wepf$. We prove that it satisfies not only the generalized colon-capturing property (\cref{m-thm-WepfGCC}), but also two stronger colon-capturing properties (\cref{m-prop-WepfStrongColonCap}). We also show that $\wepf$ is a Dietz closure satisfying the algebra axiom (\cref{app-thm-WEPFDietz}). Let us begin with the definition of $\wepf$.

\begin{defn}\label{m-defn-WEPF}
  Let $R$ be a complete local domain of mixed characteristic $p>0$. Let $I\subseteq R$ be an ideal. Then the \emph{weak $\epf$ closure} of $I$, denoted by $I^{\wepf}$, is defined to be $I^{\wepf}:=\bigcap_{N=1}^{\infty} \left( I,p^N \right)^{\epf}$. Similarly for finitely generated $R$-modules $W\subseteq M$, we define $W^{\wepf}:=\bigcap_{N=1}^{\infty} \left( W+p^N M \right)^{\epf}_M$.
\end{defn}
\begin{rmk}
  It is clear from the definition that $I^{\epf}\subseteq I^{\wepf}$. So the weak $\epf$ closure also satisfies the usual colon-capturing. It is
  not hard to see that $I^{\wepf}\subseteq I^{\rof}$: Let $u\in
  I^{\wepf}$. For any rank 1 valuation $\nu$ on $R^+$, any $N\in \NN$
  and any $\varepsilon\in\QQ^+$, since $u\in I^{\wepf}\subseteq
  (I,p^N)^{\epf}$, there is some $c\in R$ such that $c^{\delta} u \in
  (I,p^N)R^+ $ for any $\delta\in \QQ^+$. We choose $\delta$ small
  enough such that $\nu(c^{\delta})< \varepsilon$. The existence of such a sequence of $c^{\delta}$ implies that $u\in I^{\rof}$.
\end{rmk}

Since the $\epf$ closure on complete regular local domains with $F$-finite residue fields is trivial by \cite[Theorem 3.9]{Heitmann2018}, we have 
\[
  I^{\wepf}=\bigcap_{N=1}^{\infty} \left(I,p^N\right)^{\epf}  =\bigcap_{N=1}^{\infty} \left(I,p^N\right) = I.
\]
Therefore we have
\begin{cor}\label{m-cor-WEPFTrivial}
The $\wepf$ closure is trivial on complete regular local domains of mixed characteristic with $F$-finite residue fields.
\end{cor}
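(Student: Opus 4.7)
The plan is exactly the three-line computation sketched in the paragraph preceding the statement. Given an ideal $I$ of a complete regular local domain $R$ of mixed characteristic with $F$-finite residue field, I would first unfold \cref{m-defn-WEPF} to write
\[
  I^{\wepf} = \bigcap_{N\geqslant 1} (I, p^N)^{\epf}.
\]
Next, I would invoke \cite[Theorem 3.9]{Heitmann2018}, which asserts that the $\epf$ closure of every ideal of such a regular $R$ is trivial; applying this to each $(I, p^N)$ replaces $(I, p^N)^{\epf}$ by $(I, p^N)$. Finally, I would apply the Krull intersection theorem: passing to $R/I$, one has $\bigcap_N (p^N)(R/I) = 0$ since $p$ lies in the maximal ideal of the Noetherian local ring $R/I$, whence $\bigcap_N (I, p^N) = I$. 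Combining the three steps yields $I^{\wepf} = I$, which is the claim.

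There is no real obstacle here: all the substantive content sits in the cited triviality of $\epf$ on complete regular local domains with $F$-finite residue field, which rests on the perfectoid machinery of Heitmann and Ma. The only items worth checking are (i) that $p\in\mm$, which is built into the definition of mixed characteristic and is precisely what makes the Krull intersection argument applicable, and (ii) that if one wishes to record the module version (as the definition of $\wepf$ in \cref{m-defn-WEPF} permits), the same three-step argument goes through verbatim after one notes that $(W + p^N M)^{\epf}_M = W + p^N M$ by the module form of the same triviality result, and then invokes Krull intersection for the finitely generated $R$-module $M/W$.
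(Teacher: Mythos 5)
Your proof is correct and matches the paper's argument exactly: unfold the definition of $\wepf$, apply Heitmann--Ma's triviality of $\epf$ on complete regular local domains with $F$-finite residue field, and conclude by Krull intersection since $p\in\mm$. The paper runs these three steps in a single displayed line preceding the corollary statement; your added remarks about the module version and the role of $p\in\mm$ are harmless elaborations of the same argument.
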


Next we want to show that the $\wepf$ closure satisfies the generalized colon-capturing axiom using $p$-colon-capturing.

\begin{thm}\label{m-thm-WepfGCC}
  Let $R$ be a complete local domain of mixed characteristic $p$ with an $F$-finite residue field. Then the $\wepf$ closure on $R$ satisfies the generalized colon-capturing axiom.
\end{thm}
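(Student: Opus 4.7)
The plan is to derive the generalized colon-capturing axiom for $\wepf$ from the $p$-colon-capturing property for $\epf$ (\cref{m-thm-pColonCap}) by applying the homomorphism $f$ to extract a colon relation in $R^+$. Using the alternate form \cref{axiom-alternate}, I am given a partial system of parameters $\seq{k+1}$, $J=(\seq{k})$, and a homomorphism $f:M\to R/J$ with $f(v)=x_{k+1}+J$, and I must show that $u\in (Rv)^{\wepf}_M\cap \aker f$ implies $u\in (Jv)^{\wepf}_M$. Unwinding the definition of $\wepf$, it suffices to prove that $u\in (Jv+p^M M)^{\epf}_M$ for every fixed $M\geqslant 1$.

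Fix such an $M$ and set $N=M+N_0$, where $N_0$ is the $p$-colon-capturing constant from \cref{m-thm-pColonCap}. Since $u\in (Rv+p^N M)^{\epf}_M$ by hypothesis, there exists $c_1\in R\setminus\{0\}$ such that for every $\varepsilon\in\QQ^+$ and every $N'\geqslant N$ one has an expression
\[
c_1^{\varepsilon}\otimes u \;=\; r\otimes v + p^N m + p^{N'} m'\quad\text{in }M^+
\]
for some $r\in R^+$ and $m,m'\in M^+$ (all depending on $\varepsilon$ and $N'$). Applying the $R^+$-linear extension of $f$ and using $f(u)=0$ together with $f(v)\equiv x_{k+1}\pmod J$ yields $r\,x_{k+1}\in (J,p^N)R^+$, so $r \in (J,p^N)R^+ :_{R^+} x_{k+1}$. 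By $p$-colon-capturing, $r\in \bigl((J,p^M)R^+\bigr)^{\epf}$.

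To combine these two $\epf$ containments into a single witness for $u$, I will use the crucial observation that the proof of \cref{m-thm-pColonCap} produces the containment $r\in ((J,p^M)R^+)^{\epf}$ via \cref{m-cons-AndrePerfectoidConstruction} and \cref{m-lem-EpfSufficientCondition}, and thus yields a \emph{uniform} witness $c_2=pg$ independent of the particular element $r$. Consequently, for every $\delta\in\QQ^+$ and every $N''\geqslant 1$, $c_2^{\delta}\,r \in (J,p^M,p^{N''})R^+$. Setting $c=c_1c_2\in R\setminus\{0\}$, multiplying the displayed equation by $c_2^{\delta}$ with $\delta=\varepsilon=\eta$, and using $p^N M^+,\,p^{N'}M^+\subseteq p^M M^+\subseteq \aim((Jv+p^M M)^+\to M^+)$, I obtain, for an arbitrary prescribed $N'''$ (after choosing $N'\geqslant N$ and $N''\geqslant N'''$ suitably),
\[
c^{\eta}\otimes u\;\in\; \aim\bigl((Jv+p^M M)^+\to M^+\bigr) + p^{N'''}M^+.
\]
This holds for every $\eta\in\QQ^+$ and $N'''\geqslant 1$, so $u\in (Jv+p^M M)^{\epf}_M$. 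Since $M$ was arbitrary, $u\in (Jv)^{\wepf}_M$.

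The main obstacle is the witness-tracking in the last step. The $\epf$ closure is defined element-by-element with a potentially different witness for each element, so a priori the witness for $r\in ((J,p^M)R^+)^{\epf}$ could depend on $\varepsilon$ and $N'$ through the dependence of $r$ on these parameters. The resolution is to inspect the perfectoid argument in the proof of \cref{m-thm-pColonCap}: the containment arises from $r$ being $\pir{(pg)}$-almost in $(J,p^M)\mbc$, and \cref{m-lem-EpfSufficientCondition} converts such almost-containments into $\epf$ relations with the uniform witness $c_2=pg$. It is this uniformity that allows the combined element $c=c_1c_2$ to serve as a single witness for $u\in (Jv+p^M M)^{\epf}_M$ across all $\varepsilon$ and $N'$.
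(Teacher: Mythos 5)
Your proposal follows the same overall route as the paper's proof: use the alternate form \cref{axiom-alternate} of generalized colon-capturing, apply the map $f$ (tensored up to $R^+$) to the witness equation $c^{\varepsilon}\otimes u = a\otimes v + p^{N_1}\mu$ to pull out a colon relation $a\in (\seq{n},p^{N_1})R^+:_{R^+}x_{n+1}$ in $R^+$, invoke $p$-colon-capturing (\cref{m-thm-pColonCap}), and then re-absorb the result into a single $\epf$ containment for each fixed level $N_1-N_0$. The paper does exactly this, ending with $u\in (Iv,p^{N_1-N_0})^{\epf}_M$ for each $N_1$ and then intersecting.

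You have, however, identified a genuine subtlety that the paper's own proof elides: the element $a$ produced at each stage depends on $\varepsilon$, so the witness $c'$ obtained from $a\in ((\seq{n},p^{N_1-N_0})R^+)^{\epf}$ can a priori depend on $\varepsilon$ as well, while the definition of $\epf$ for the module $(Iv+p^{N_1-N_0}M)\subseteq M$ requires a \emph{single} multiplier. The paper asserts that $c'$ ``depends on $N_1-N_0$'', implicitly (and without argument) suppressing the possible $\varepsilon$-dependence. Your instinct that this needs justification is correct.

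Where your proposal has a gap of its own is in the claimed resolution. You assert that inspecting the proof of \cref{m-thm-pColonCap} gives a \emph{uniform} witness $c_2=pg$, independent of the element $a=a_{\varepsilon}$. But in \cref{m-cons-AndrePerfectoidConstruction} and the proof of \cref{m-thm-pColonCap}, the element $g\in A$ is chosen so that $A_{pg}\to S_{pg}$ is finite \'etale for a module-finite extension $S\subseteq R^+$ of $R$ that is large enough to contain the coefficients of a specific $R^+$-relation (here, one involving $a_\varepsilon$). That $S$ — hence $g$, hence the ideal $(pg)$ and the algebra $\mbc$ — varies with $\varepsilon$. Nothing in \cref{m-cons-AndrePerfectoidConstruction}, \cref{m-lem-EpfSufficientCondition}, or the proof of \cref{m-thm-pColonCap} as written guarantees a $g$ that works simultaneously for the whole family $\{a_\varepsilon\}_{\varepsilon\in\QQ^+}$, and a $g$ good for $A\to R$ need not be good for a larger $S$ (ramification of $R\to S$ may occur away from $pg$). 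So the uniformity claim, which is the load-bearing step in your reduction, is unproved. To make it rigorous one would need either to show the perfectoid data can be chosen independently of $S$ in this situation, or to rework the argument at the level of the fixed perfectoid algebra $\cp{R}$ directly (as Heitmann--Ma do for ordinary colon-capturing) so that the almost-multiplier is manifestly independent of $\varepsilon$.
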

\begin{proof}
  Let $M$ be an $R$-module and let $\seq{n+1}$ be a partial system of parameters of $R$. Let $f:M\rightarrow R/I$ be a morphism of $R$-modules where $I=(\seq{n})R$ and $f(v)=\bar{x}_{n+1}$in $R/I$. Suppose that $u\in (Rv)^{\wepf}_M\cap \aker f$. We want to show that $u\in (Iv)^{\wepf}_M$.

  If we apply $p$-colon-capturing (\cref{m-thm-pColonCap}) to the system of parameters $\seq{n+1}$, we learn that there is some $N_{0}$ such that for any $N\geqslant N_{0}$ we have
  \begin{equation}\label{eq:m-wec-1}
    \left( \seq{n},p^{N}\right)R^+:_{R^{+}} x_{n+1} \subseteq \left(\left( \seq{n},p^{N-N_{0}}\right)R^{+}\right)^{\epf}.
  \end{equation}

  Since $u\in (Rv)^{\wepf}_M$, we have $u\in \bigcap_{N=1}^{\infty} (Rv+p^NM)^{\epf}_M$, i.e., $u\in (Rv+p^NM)^{\epf}_M$ for all positive integers $N$ or equivalently all $N\geqslant N_{0}$. Fix $N=N_1\geqslant N_{0}$. We know that there is some nonzero element $c\in R$ such that for any $\varepsilon\in\QQ^+$, we have $c^{\varepsilon}\otimes u\in \aim(R^+\otimes_{R}v\rightarrow M^+)+p^{N_1}M^+$, where $M^+= R^+\otimes_{R}M$ as in \cref{i-ntn-plus}. So there is some $a\in R^+$ and $\mu\in M^{+}$ such that $c^{\varepsilon}\otimes u=a\otimes v+p^{N_1}\mu$. We apply $1\otimes_{R}f$ and note that $u\in \aker f$. Hence, $ a x_{n+1}+p^{N_1} (1\otimes_{R} f)(\mu)\in IR^+$, which gives us $a\in (\seq{n},p^{N_1})R^+:_{R^+} x_{n+1}$. By \eqref{eq:m-wec-1}, we have $a\in ((\seq{n},p^{N_1-N_0})R^+)^{\epf}$. Hence, there is some $c'$ (depending on $N_1-N_0$) such that
  \[
    (c')^{\varepsilon}a\in (\seq{n})R^++p^{N_1-N_0}R^+.
  \]
  Now everything on the right-hand side of $\left(c'c\right)^{\varepsilon}\otimes u=(c')^{\varepsilon}a\otimes v+(c')^{\varepsilon}p^{N_1}\mu$ is
  in $\aim(IR^+\otimes_{R}v\to M^+)+p^{N_1-N_0}M^+$. We have
  \[
    \left( c'c\right)^{\varepsilon}\otimes u \in
    \aim(IR^+\otimes_{R}v\rightarrow M^+)+p^{N_1-N_0}M^+.
  \]
  Therefore, $u\in (Iv,p^{N_1-N_0})^{\epf}_M$ for all
  $N_1\geqslant N_{0}$. We conclude that $u\in (Iv)^{\wepf}_M$.
\end{proof}

\begin{rmk}
  In the proof above, the element $c'$ depends on $N_1-N_0$. Hence, we can not use the same $c'$ when $N_1$ changes. Therefore, we do not have an obvious way to prove that $\epf$ closure satisfies the generalized colon-capturing axiom. So far as we know, this is an open question. 
\end{rmk}

We also prove that the the $\wepf$ closure satisfies some strong colon-capturing conditions (both versions A and B) defined in \cite[Definition 3.9]{R.G.2016}.

\begin{prop}\label{m-prop-WepfStrongColonCap}
  Let $\seq{k}$ be a partial system of parameters in a
  complete local ring $R$ of mixed characteristic $p$ with an $F$-finite residue field. Let $t,a$ be two positive integers. Then
  \begin{enumerate}
  \item $\left((x_1^t,x_2,\ldots,x_k)R\right)^{\wepf}:_R x_1^a\subseteq \left((x_1^{t-a},x_2,\ldots,x_k)R\right)^{\wepf}$ for all $a<t$;
  \item $\left((\seq{k})R\right)^{\wepf}:_R x_{k+1}\subseteq \left((\seq{k})R\right)^{\wepf}$.
  \end{enumerate}
\end{prop}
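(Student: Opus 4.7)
Both parts will follow the blueprint of \cref{m-thm-WepfGCC}: unpack the hypothesis $x\cdot u\in(\text{ideal})^{\wepf}$ into $\epf$-witnesses level-by-level, apply $p$-colon-capturing (\cref{m-thm-pColonCap}) to transfer the multiplier $x$ from the ideal to the exponent, and reassemble into a $\wepf$ statement about $u$ alone.

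For part (2), I would suppose $u\in((\seq{k})R)^{\wepf}:_R x_{k+1}$ and fix an arbitrary $M\geqslant 1$. Letting $N_0$ denote the $p$-colon-capturing constant for the system of parameters $\seq{k+1}$ and setting $N=M+N_0$, the hypothesis furnishes an element $c\in R\setminus\{0\}$ with $c^{\varepsilon}x_{k+1}u\in(\seq{k},p^N)R^+$ for every $\varepsilon\in\QQ^+$. Hence $c^{\varepsilon}u\in(\seq{k},p^N)R^+:_{R^+}x_{k+1}$, which by \cref{m-thm-pColonCap} is contained in $((\seq{k},p^M)R^+)^{\epf}$. As in the proof of \cref{m-thm-WepfGCC}, I would take the witness furnished by this last $\epf$-closure to be a single element $c'$ depending only on $M$, yielding $(c'c)^{\varepsilon}u\in(\seq{k},p^M)R^+$ for every $\varepsilon$. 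This shows $u\in(\seq{k},p^M)^{\epf}$; since $M$ is arbitrary, $u\in((\seq{k})R)^{\wepf}$.

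Part (1) will run in parallel, but with $p$-colon-capturing applied to the reordered partial system of parameters $x_2,\ldots,x_k,x_1^a$ (still part of a system of parameters, since $\seq{k}$ is, and $x_1^a$ generates up to radicals the same ideal as $x_1$). Starting from $c^{\varepsilon}x_1^a u\in(x_1^t,x_2,\ldots,x_k,p^N)R^+$ and factoring $x_1^t=x_1^a\cdot x_1^{t-a}$, I would rewrite this as $c^{\varepsilon}u-x_1^{t-a}\rho\in(x_2,\ldots,x_k,p^N)R^+:_{R^+}x_1^a$ for some $\rho\in R^+$. Applying \cref{m-thm-pColonCap} and then the uniform-witness argument as in part (2) produces an element $c^*$ (depending only on $M$) such that $(c^*c)^{\delta}u\in(x_1^{t-a},x_2,\ldots,x_k,p^M)R^+$ for every $\delta\in\QQ^+$, so $u\in(x_1^{t-a},x_2,\ldots,x_k,p^M)^{\epf}$ for each $M$, and thus $u\in((x_1^{t-a},x_2,\ldots,x_k)R)^{\wepf}$.

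The main obstacle, in both parts, is the \emph{uniform witness step}: that the element $c'$ (respectively $c^*$) supplied by $p$-colon-capturing can be taken independently of the varying outer exponent $\varepsilon$. This is exactly the implicit ingredient in the proof of \cref{m-thm-WepfGCC}, traceable to \cref{m-cons-AndrePerfectoidConstruction} and \cref{m-lem-EpfSufficientCondition}: the $\epf$-witness is in essence $pg$, where $g$ is a discriminant for a sufficiently large module-finite extension $S\supseteq R$ that accommodates all the algebraic data in question, independent of $\varepsilon$. Once this is secured, both parts reduce to routine bookkeeping of ideal and colon containments.
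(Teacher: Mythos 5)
Your proof is correct and follows essentially the same approach as the paper's: unpack the $\wepf$ hypothesis at each modulus $N$ into an $\epf$-witness, rewrite as a colon containment, invoke $p$-colon-capturing (\cref{m-thm-pColonCap}) applied to the reordered/power-adjusted partial system of parameters, and reassemble. The uniform-witness point you flag is likewise implicitly assumed in the paper's write-up, where the new $\epf$-witness is denoted $d_{N-N_0}$ with a subscript signaling dependence only on $N-N_0$ and not on $\varepsilon$.
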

\begin{proof}
  (1) For the first containment, consider an element $u\in
  \left((x_1^t,x_2,\ldots,x_k)R\right)^{\wepf}:_R x_1^a$. Then 
  \[
    ux_1^a\in\left( (x_1^t,x_2,\ldots,x_k)R\right)^{\wepf}.
  \]
  For any $N$ there is some $c_{N}\in R$ such that for any $\varepsilon$ we
  have $ c_{N}^{\varepsilon} ux_{1}^{a} \in \left(
    x_{1}^{t},x_{2},\ldots,x_{k},p^{N}\right)R^{+} $.
  So there is some $v\in R^+$ such that $c^{\varepsilon}_{N}x_1^au - x_1^tv \in \left(x_2,\ldots,x_k,p^N\right)R^+$. Then we have
  $$ c^{\varepsilon}_{N}u-x_1^{t-a}v \in (x_2,\ldots,x_k,p^N)R^+:_{R^+}x_1^a.$$
  By \cref{m-thm-pColonCap} there is some $N_0$ such that for any $N\geqslant
  N_{0}$ we have $ c^{\varepsilon}_{N}u-x_1^{t-a}v\in \left(
    (x_2,\ldots,x_k,p^{N-N_0})R^+ \right)^{\epf}$. So there is another
  element $d_{N-N_0}\in R^+$ such that for any  positive rational number
  $\delta$, we have
  \begin{align*}
    d_{N-N_0}^{\delta} (c^{\varepsilon}_{N}u-x_1^{t-a}v) &\in \left(x_2,\ldots,x_k,p^{N-N_0}\right)R^+  \\
    \Rightarrow  d_{N-N_0}^{\delta} c^{\varepsilon}_{N}u &\in \left(x_1^{t-a},x_2,\ldots,x_k,p^{N-N_0}\right)R^+.
  \end{align*}
  We can choose $\delta=\varepsilon$. Hence, we conclude that $u \in \left(( x_1^{t-a},x_2,\ldots,x_k,p^{N-N_{0}})R \right)^{\epf}$. Since this is true for all $N\geqslant N_{0}$, we conclude that $u\in \left((x_{1}^{t-a},x_{2},\ldots, x_{k})R\right)^{\wepf}$.

  (2) For the second containment, the proof is similar. For any $u\in \left(  (x_1,x_2,\ldots,x_k)R\right)^{\wepf}:_R x_{k+1}$, we have $ux_{k+1}\in \left((x_1,x_2,\ldots,x_k)R\right)^{\wepf}$. So for any $N$ there is some $c_{N}$ 
  such that for all $\varepsilon$ we have
  \[
    c^{\varepsilon}_{N}x_{k+1}u \in (x_1,x_2,\ldots,x_k,p^N)R^+.
  \]
  So we have $c^{\varepsilon}_{N}u \in
  (x_1,x_2,\ldots,x_k,p^N)R^+:_{R^+}x_{k+1}$. Again, by \cref{m-thm-pColonCap}, we have some $N_0$ such that for all $N\geqslant N_{0}$, $c^{\varepsilon}_{N}u \in
  \left( (x_1,x_2,\ldots,x_k,p^{N-N_0})R^+ \right)^{\epf}$.

  So there is another element $d_{N-N_0}$ such that for any positive rational $\delta$, we have
  \[
    d_{N-N_0}^{\delta} c^{\varepsilon}_{N}u \in \left(x_1,x_2,\ldots,x_k,p^{N-N_0}\right)R^+.
  \]
  For the same reason as in the end of the proof of (1), we conclude that $u\in \left( (x_1,x_2,\ldots,x_k)R \right)^{\wepf}$.
\end{proof}

\begin{rmk}
  We point out that the same arguments in both \cref{m-thm-WepfGCC} and \cref{m-prop-WepfStrongColonCap} work for the $\rof$
  closure. Interested readers can work out the details of the proof.
\end{rmk}

Next, we prove that $\wepf$ satisfies all of Dietz's and R.G.'s axioms (\cref{app-axm-DietzAxioms}). As mentioned in \cref{sn-introduction}, this gives a new proof of the existence of big Cohen-Macaulay algebras. The results in this section are not used in \cref{sn-PhantomExtensions} and \cref{sn-p-PositiveCharCase}.

In \cite[Proposition 7.2]{R.G.2016} R.G. proved that the usual $\epf$
closure satisfies the first six axioms. Next we prove
\begin{thm}\label{app-thm-WEPFDietz}
  The $\wepf$ closure satisfies all axioms in \cref{app-axm-DietzAxioms}.
\end{thm}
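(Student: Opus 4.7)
The plan is to verify the eight axioms of \cref{app-axm-DietzAxioms} in order, using the intersection definition $Q_M^{\wepf}=\bigcap_N(Q+p^NM)_M^{\epf}$ together with R.G.'s verification in \cite{R.G.2016} that the $\epf$ closure satisfies axioms (1)--(6). Axiom (7) is already established in \cref{m-thm-WepfGCC}, so the remaining work is concentrated on (1)--(6) (routine but requiring some care for idempotence and for the zero ideal) and axiom (8), the algebra axiom, which I expect to be the main obstacle.

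Axioms (1), (3), and (4) pass transparently through the intersection: extension and order-preservation are preserved under intersection, and functoriality follows from $f(Q+p^NM)\subseteq f(Q)+p^NW$ combined with the corresponding property for $\epf$. For idempotence (axiom 2), given $u\in(Q_M^{\wepf})_M^{\wepf}$, I would pick a finite generating set $q_1,\ldots,q_r$ of $Q_M^{\wepf}$ (finite by Noetherianness) and, at each level $N$, combine the $\epf$-witness $c_N$ for $u$ with the individual $\epf$-witnesses $d_{1,N},\ldots,d_{r,N}$ for the $q_i$'s at level $N$; the product $c_N\prod_id_{i,N}$ serves as a single multiplier placing $u$ in $(Q+p^NM)_M^{\epf}$, whence $u\in Q_M^{\wepf}$. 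For semi-residuality (axiom 5), I would lift $\bar u\in 0^{\wepf}_{M/Q}$ to $u\in M$ and apply the $\epf$-witness at each $N$ to conclude $u\in(Q+p^NM)_M^{\epf}$ for every $N$, so $u\in Q_M^{\wepf}=Q$ and $\bar u=0$. For axiom (6), $\mm^{\wepf}=\mm^{\epf}=\mm$ since $p\in\mm$ gives $\mm+p^NR=\mm$; for $0^{\wepf}=0$, I would use a rank-one valuation $\nu$ on $R^+$ with $\nu(p)>0$: the $\epf$-witness $c_N^\varepsilon u\in p^NR^+$ forces $\varepsilon\nu(c_N)+\nu(u)\ge N\nu(p)$ for every $\varepsilon\in\QQ^+$, and letting $\varepsilon\to 0^+$ then $N\to\infty$ yields $u=0$ (this is essentially R.G.'s argument that $0^{\rof}=0$ applied to $0^{\wepf}\subseteq 0^{\rof}$).

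The hard part is axiom (8), the algebra axiom. Given that $\alpha:R\to M$, $1\mapsto e_1$, is $\wepf$-phantom, I must show $\alpha':R\to\sym^2(M)$, $1\mapsto e_1\otimes e_1$, is also $\wepf$-phantom. My plan is to adapt R.G.'s symmetric-tensor construction from \cite{R.G.2018}: build a projective resolution $P'_\bullet$ of $\mathrm{coker}(\alpha')$ from a projective resolution $P_\bullet$ of $\mathrm{coker}(\alpha)$ via the $\sym^2$ construction, and express the cocycle $\epsilon'\in\ahom_R(P'_1,R)$ representing $\alpha'$ in terms of the cocycle $\epsilon\in\ahom_R(P_1,R)$ representing $\alpha$. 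At each fixed level $N$, the $\wepf$-phantomness of $\alpha$ furnishes a witness $c_N\in R\setminus\{0\}$ and, for each $\varepsilon\in\QQ^+$, an element $\phi_{N,\varepsilon}\in\aim(\ahom_R(P_0,R)^+\to\ahom_R(P_1,R)^+)$ with $c_N^\varepsilon\epsilon\equiv\phi_{N,\varepsilon}\pmod{p^N}$. The key computation is to produce from this data, via a natural $\sym^2$-pairing on cochains, a corresponding witness $\phi'_{N,\varepsilon}$ for $c_N^{2\varepsilon}\epsilon'\equiv\phi'_{N,\varepsilon}\pmod{p^N}$, and then intersect over $N$ to conclude. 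The main obstacle is tracking the $p^N$ error terms through the $\sym^2$ double complex: because the multiplier $c_N$ varies with $N$, one cannot simply reuse the $\epf$-argument verbatim, and one must carefully verify that combining two $\wepf$-witnesses in the $\sym^2$-sense — with cross terms of the form $\epsilon\otimes p^N$ and $p^N\otimes\epsilon$ — still yields a legitimate $\epf$-witness modulo $p^N$ for every $N$.
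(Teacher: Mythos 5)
Your proof takes essentially the same approach as the paper: verify axioms (1)--(6) by passing the known $\epf$ properties through the defining intersection, point to \cref{m-thm-WepfGCC} for axiom (7), and adapt R.G.'s argument from \cite[Proposition 3.19]{R.G.2018} for the algebra axiom. Your idempotence argument is a slight clarification of the paper's (fixing a generating set of $Q_M^{\wepf}$ up front so the combined multiplier $c_N\prod_i d_{i,N}$ is visibly independent of $\varepsilon$), and your valuation-theoretic argument for $0^{\wepf}=0$ is the same R.G.-style argument the paper cites.
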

\begin{proof}
  (1) Since $\left(Q+p^{N}M\right)^{\epf}_{M}$ is a submodule
  containing $Q$ for each
  positive integer $N$, we conclude that the intersection, that is the
  $\wepf$ closure of $Q$, is a submodule of $M$ containing $Q$.

  (2) Since the ambient module is always $M$ here, we omit the subscript
  and write $Q^{\wepf}$ for $Q_M^{\wepf}$.
  We need to show that $Q^{\wepf}$ is a $\wepf$ closed submodule, i.e.,
  $(Q^{\wepf})^{\wepf}=Q^{\wepf}$. 
  Let $u\in (Q^{\wepf})^{\wepf}$.
  Then, by definition, for each $N$ there is some $c_{N}\in R$ such that
  for any $\varepsilon\in\QQ^{+}$  we have
  \[
    c_{N}^{\varepsilon} \otimes u \in \aim((Q^{\wepf})^{+}\to M^+) +
    p^{N}M^+.
  \]
  So there exist some elements $r_{1},
  \ldots,r_{n}\in R^{+}$, $q_{1},\ldots,q_{n}\in
  Q^{\wepf}$, and $v\in M^{+}$ such that
  \[
    c_{N}^{\varepsilon} \otimes u = \sum_{i=1}^{n} r_{i}\otimes q_{i} +
    p^{N} v.
  \] 
  Look at one $q_{i}$. For each positive integer $N_{i}$, there is
  some $c_{i,N_{i}}\in R^+$ such that for any $\varepsilon_{i}\in\QQ^{+}$ we
  have $c_{i,N_{i}}^{\varepsilon_{i}} q_{i}\in \aim(Q^+\to M^+)+p^{N_i}M^+$. Choose $N_{i}$ to be $N$, and we have
  \[
    (\prod_{i=1}^{n} c_{i,N}^{\varepsilon_{i}})c_{N}^{\varepsilon}\otimes u \in \aim (Q^+\to M^+)+p^N M^+.
  \] 
  Choose $\varepsilon_{i}$ to be $\varepsilon$, this implies that
  $u\in (Q+p^{N}M)^{\epf}$. Since this is true for any $N$, we
  conclude that $u\in Q^{\wepf}$.

  (3) This is true for $\epf$ closure, and hence we have
  $(Q+p^{N}W)_{W}^{\epf}\subseteq (M+p^{N}W)_{W}^{\epf}$ for all
  positive integer $N$. Hence 
  $\bigcap_{N}(Q+p^{N}W)_{W}^{\epf}\subseteq
  \bigcap_{N}(M+p^{N}W)_{W}^{\epf}$, i.e., $Q^{\wepf}_{W}\subseteq
  M^{\wepf}_{W}$.

  (4) Note that
  \[
    f(Q_{M}^{\wepf}) = f \left( \bigcap_{N} (Q+p^{N}M)_{M}^{\epf}\right)
    \subseteq \bigcap_{N} f\left( (Q+p^{N}M)_{M}^{\epf}\right).
  \]
  For each term we have
  \[
    f\left( (Q+p^{N}M)_{M}^{\epf}\right)\subseteq \left(
      (f(Q)+p^{N}f(M))_{W}^{\epf}\right) \subseteq \left((f(Q)+p^{N}W)_{W}^{\epf}\right).
  \]
  We conclude that
  \[
    f(Q_{M}^{\wepf}) 
    \subseteq \bigcap_{N} \left(
      (f(Q)+p^{N}W)_{W}^{\epf}\right)=\left( f(Q)\right)_{W}^{\wepf}.
  \] 

  (5) Again, we omit the subscript as the ambient module is always $M$.
  Assume that $Q$ is $\wepf$-closed. Let $ \overline{u}$ be an element
  in $0_{M/Q}^{\wepf}$. We want to show that $ \overline{u}=0$.  For each
  $N$ we have $c_{N}^{\varepsilon} \overline{u} \in p^{N} (M/Q)^{+}$. Since we have
  $(M/Q)^{+}\simeq M^{+}/\aim(Q^{+}\to M^{+})$, we conclude that
  $c_{N}^{\varepsilon} u \in \aim(Q^{+}\to M^{+}) +p^{N}M^{+}$ for any $u$
  that is a preimage of $ \overline{u}$ in $M$. Therefore, $u\in
  Q_{M}^{\wepf}=Q$. Hence $ \overline{u}=0$ in $M/Q$.

  (6) Note that $R$ is of mixed characteristic $p$. So $p\in \mm$ and hence $\mm+p^{N}R=\mm \Rightarrow
  \mm^{\wepf}=\mm^{\epf}=\mm$. For $0^{\wepf}=0$, we prove by citing known
  results. The same argument in the last part of the proof in
  \cite[Proposition 7.2]{R.G.2016} works directly for $\wepf$. This
  argument also works for $\rof$ closure, i.e., $0^{\rof}=0$, which
  implies that $0^{\wepf}=0$. Dietz pointed out that $0^{\cl}=0$ follows from the other 5 axioms and generalized colon-capturing in \cite[Lemma 1.3(e)]{Dietz2018}. Thus, for the case where the complete local
  ring has a $F$-finite residue field, we have an alternate
  prove of $0^{\wepf}=0$ using the generalized colon-capturing property \cref{m-thm-WepfGCC}.

  (7) See \cref{m-thm-WepfGCC}.

  (8) We also point out that similar arguments to those in \cite[Proposition
  3.19]{R.G.2018} work for $\wepf$ closure and therefore, $\wepf$ also
  satisfies the algebra axiom.
\end{proof}

\begin{rmk}\label{m-rmk-ClIff}
  Note that if a closure operation satisfies both the functoriality axiom and the semi-residuality axiom in \cref{app-axm-DietzAxioms}, then the statement in semi-residuality axiom can be improved to $Q^{\cl}_M=Q$ if and only if $0^{\cl}_{M/Q}=0$. The ``only if'' direction is the semi-residuality axiom. The ``if'' direction comes from the functoriality axiom: consider the map $f:M\to M/Q$, we have $f(Q^{\cl}_M)\subseteq \left( f(Q) \right)_{M/Q}^{\cl}=0^{\cl}_{M/Q}=0$. So $f(Q^{\cl}_M)\subseteq\aker(f)=Q$.
\end{rmk}

The following proposition is proved by using standard techniques of reducing closure problem for submodules to the case of ideals. The proof we include here is basically the same as the proof of \cite[Proposition 8.7]{Hochster1990}. 

\begin{prop}
  Let $(R,\mm)$ be a complete regular local domain of mixed characteristic with $F$-finite residue fields. Then every submodule $W$ of a finitely generated module $M$ is $\wepf$-closed.
\end{prop}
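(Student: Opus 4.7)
The plan is to reduce, via Remark \ref{m-rmk-ClIff}, to showing that $0^{\wepf}_{M}=0$ for every finitely generated $R$-module $M$, and then to detect any hypothetical nonzero element via a homomorphism into a cyclic quotient of $R$, where \cref{m-cor-WEPFTrivial} supplies the conclusion. Concretely, Remark \ref{m-rmk-ClIff} (combining functoriality and semi-residuality) gives that $W\subseteq M$ is $\wepf$-closed in $M$ if and only if $0^{\wepf}_{M/W}=0$, so since $M/W$ is again finitely generated it suffices to prove $0^{\wepf}_{M}=0$ for every finitely generated $M$. For an ideal $I\subseteq R$, unpacking \cref{m-defn-WEPF} shows that $u\in 0^{\wepf}_{R/I}$ if and only if any lift $\tilde u\in R$ lies in $\bigcap_{N}\bigl((I,p^{N})R\bigr)^{\epf}=I^{\wepf}$; by \cref{m-cor-WEPFTrivial} this forces $\tilde u\in I$, so $0^{\wepf}_{R/I}=0$ for every ideal $I$.

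For the general case, suppose, toward a contradiction, that $u\in 0^{\wepf}_{M}$ with $u\neq 0$ for some finitely generated $M$. Since $R$ is a complete local ring, Matlis duality yields a homomorphism $\phi\colon M \to E:=E_{R}(R/\mm)$ with $\phi(u)\neq 0$. Because $R$ is regular (hence Gorenstein) of dimension $d$, the injective hull $E$ is isomorphic to the top local cohomology $H^{d}_{\mm}(R)$, which is the filtered union of submodules isomorphic to $R/I_{t}$ with $I_{t}=(x_{1}^{t},\ldots,x_{d}^{t})$ for a regular system of parameters $x_{1},\ldots,x_{d}$ of $R$, the transition maps being injective (a consequence of the colon-capturing property for the regular sequence $x_{1}^{t},\ldots,x_{d}^{t}$). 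The finitely generated submodule $\phi(M)\subseteq E$ must therefore be contained in some $R/I_{t}\subseteq E$, which gives a homomorphism $\psi\colon M \to R/I_{t}$ with $\psi(u)\neq 0$. But by functoriality (axiom (4) of \cref{app-axm-DietzAxioms}) we have $\psi(u)\in \psi(0^{\wepf}_{M})\subseteq 0^{\wepf}_{R/I_{t}}=0$, a contradiction. Hence $u=0$.

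The main obstacle is guaranteeing that a nonzero element of an arbitrary finitely generated module can be detected by a map into a cyclic quotient of $R$. This is precisely where both the completeness and the Gorenstein property of $R$ enter essentially: Matlis duality produces a map into $E_{R}(R/\mm)$ detecting $u$, and the explicit direct-limit description of $E_{R}(R/\mm)$ as the top local cohomology module $H^{d}_{\mm}(R)$ then refines this detection to land in some cyclic $R/I_{t}$, where \cref{m-cor-WEPFTrivial} closes the argument.
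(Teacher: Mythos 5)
Your proof is correct, but it takes a genuinely different route from the paper's. The paper follows the classical reduction from \cite[Proposition 8.7]{Hochster1990}: first replace $W$ by a submodule of $M$ maximal with respect to excluding $u$, so that after passing to $M/W$ the element $u$ lies in every nonzero submodule; an associated-primes argument then shows $M$ has finite length and is an essential extension of $k \cong Ru$; finally, since $R$ is regular one picks an irreducible $\mm$-primary ideal $J \subseteq \ann_R(M)$, uses that the Artinian ring $R/J$ is self-injective to embed $M \hookrightarrow R/J$, and invokes \cref{m-rmk-ClIff} together with \cref{m-cor-WEPFTrivial}. You instead go straight to Matlis duality: detect a hypothetical nonzero $u \in 0^{\wepf}_M$ by a map $M \to E_R(R/\mm)$, then exploit the Gorenstein description $E_R(R/\mm) \cong H^d_\mm(R) = \varinjlim_t R/(x_1^t,\dots,x_d^t)$ (with injective transition maps since $x_1,\dots,x_d$ is a regular sequence) to land $\phi(M)$ in some $R/I_t$, and conclude by functoriality plus the triviality of $\wepf$ on ideals. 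Both arguments ultimately reach the same destination -- a cyclic quotient $R/J$ with $J$ an irreducible $\mm$-primary ideal ($R/I_t$ is exactly such a quotient since $R$ is regular) -- and both rely on \cref{m-cor-WEPFTrivial}. The paper's route is more elementary and self-contained (no local cohomology needed, only basic facts about Artinian Gorenstein rings), while yours is shorter and avoids the maximality/essential-extension machinery at the cost of citing the structure of $E_R(R/\mm)$. One small tidiness point: for the ideal case you write that ``unpacking \cref{m-defn-WEPF}'' shows $0^{\wepf}_{R/I}=0$; it is cleaner to simply apply \cref{m-rmk-ClIff} once more together with $I^{\wepf}=I$ from \cref{m-cor-WEPFTrivial}, rather than tracking the two quantifiers $N$ in the module-level definition, and the phrase ``colon-capturing property'' for the injectivity of the transition maps is better stated as the standard fact that $(x_1^{t+1},\dots,x_d^{t+1}) :_R (x_1\cdots x_d) = (x_1^t,\dots,x_d^t)$ for a regular sequence.
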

\begin{proof}
We want to show that for any $u\in M$ not in $W$, we have $ u\nin W^\wepf $. We may replace $W$ by a submodule of $M$ maximal with respect to not containing $u$, and we may replace $M, W$, and $u$ by $M/W,0$, and $u+W$. Then $u$ is in every nonzero submodule of $M$. We claim that $M$ is now of finite length, i.e., $M$ has only one associated prime $\mm$.

Suppose that $M$ has two associated primes $P,Q$. Let $v_1,v_2\in M$ be elements such that $\ann_R(v_1)=P$ and $\ann_R(v_2)=Q$. Then $Rv_1\subseteq M$ is isomorphic to $R/P$. So every element in $Rv_1$ has annihilator $P$. Similarly, every element in $Rv_2$ has annihilator $Q$. Then $P=Q$ as $u$ is in both submodules. Thus $\ass(M)$ consists of only one prime $P$.

Next we show that $P$ must be the maximal ideal $\mm$. If not, then the image of $(\mm^n+P)/P$ in $R/P\hookrightarrow M$ contains $u$ for every positive integer $n$. But $\bigcap_{n=1}^\infty (\mm^n+P) = P$ as $R$ is noetherian. This is impossible, and we have $\ass(M)=\set{\mm}$.

Since $u$ is in every nonzero submodule of $M$, $u$ spans the socle in $M$, and $M$ is an essential extension of $K=R/\mm\cong Ru$. Since R is regular, there exists an irreducible $\mm$-primary ideal $J\subseteq \ann_R(M)$. The Artin ring $R/J$ is self-injective, and $M$ is an essential extension of $K$ as an $R/J$-module. It follows that $M$ can be embedded in $R/J$. It will then suffice to show that $0$ is $\wepf$ closed in $R/J$, i.e., that $J$ is $\wepf$ closed in $R$ by \cref{m-rmk-ClIff}. Then \cref{m-cor-WEPFTrivial} finishes the proof.
\end{proof}

\begin{cor} 
   Let $R$ be a complete regular local domain of mixed characteristic with $F$-finite residue field. Then every submodule $W$ of a finitely generated module $M$ is $\epf$-closed.
\end{cor}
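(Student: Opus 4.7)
The plan is to reduce directly to the previous proposition via the elementary containment $W_M^{\epf}\subseteq W_M^{\wepf}$. Indeed, from \cref{m-defn-WEPF} we have
\[
  W_M^{\wepf}=\bigcap_{N=1}^{\infty}\left(W+p^N M\right)^{\epf}_M,
\]
and since $W\subseteq W+p^N M$ for every positive integer $N$, the order-preservation property of the $\epf$ closure (one of the first six Dietz axioms it satisfies) gives $W_M^{\epf}\subseteq (W+p^N M)_M^{\epf}$ for all $N$. Intersecting over $N$ yields $W_M^{\epf}\subseteq W_M^{\wepf}$.

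By the preceding proposition, applied to the same hypotheses on $R$, we have $W_M^{\wepf}=W$. Combining with the extension axiom $W\subseteq W_M^{\epf}$, we conclude $W=W_M^{\epf}$, which is exactly the statement that $W$ is $\epf$-closed in $M$.

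There is essentially no obstacle here: all the real work was done in the previous proposition (the socle/essential-extension reduction to an ideal in $R$, together with \cref{m-cor-WEPFTrivial}). The corollary is purely a formal consequence of that result and the definitional inequality $\epf\subseteq\wepf$, so the proof reduces to recording those two facts.
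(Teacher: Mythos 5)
Your proof is correct and follows the same route as the paper, which simply observes $W_M^{\epf}\subseteq W_M^{\wepf}=W$. You have merely unpacked the definitional containment $\epf\subseteq\wepf$ (via order-preservation of $\epf$) and added the trivial reverse inclusion from the extension axiom, both of which the paper leaves implicit.
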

\begin{proof}
  Since $W^\epf\subseteq W^\wepf=W$.
\end{proof}


\section{Phantom Extensions}\label{sn-PhantomExtensions}
In \cite[Theorem 5.13]{Hochster1994a}, Mel Hochster and Craig Huneke proved that in characteristic $p$, all module-finite extensions of complete local rings are phantom in the tight closure sense. We prove a similar result, namely any module-finite extension of a complete local ring of mixed characteristic $p$ with an $F$-finite residue field is $\epf$-phantom (hence $\wepf$- and $\rof$-phantom).

\begin{thm}\label{m-thm-EPFPhantom}
  If $R\rightarrow S$ is a module-finite extension of complete local domains of mixed characteristic $p$ with an $F$-finite residue field, then this map is $\epf$-phantom.
\end{thm}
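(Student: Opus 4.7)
The plan is to give a direct structural argument using the canonical $R$-algebra inclusion $\iota\colon S\hookrightarrow R^+$, which exists because $S$ is integral over the domain $R$ and $R^+$ is absolutely integrally closed in an algebraic closure of $\fr(R)$. First I would choose a finitely generated free resolution $\cdots\to P_1\xrightarrow{d_1}P_0\xrightarrow{\pi}Q\to 0$ of $Q=S/R$ over $R$, and use projectivity of $P_0$ to lift $\pi$ along $S\twoheadrightarrow Q$ to an $R$-linear map $\tilde\pi\colon P_0\to S$. The cocycle $\epsilon\in\ahom_R(P_1,R)$ representing the class of the extension in $\ext_R^1(Q,R)$ is then characterized by $\alpha\circ\epsilon=\tilde\pi\circ d_1$, where $\alpha\colon R\hookrightarrow S$ is the inclusion.

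The key step is to set $\Psi:=\iota\circ\tilde\pi\colon P_0\to R^+$ and observe
\[
\Psi\circ d_1 = \iota\circ\tilde\pi\circ d_1 = \iota\circ\alpha\circ\epsilon = \epsilon
\]
as $R$-linear maps $P_1\to R^+$. Under the canonical identification $\ahom_R(P_i,R)^+=R^+\otimes_R\ahom_R(P_i,R)\cong\ahom_R(P_i,R^+)$, valid for finitely generated projective $P_i$, the map $\Psi$ corresponds to an element of $\ahom_R(P_0,R)^+$ whose image under restriction along $d_1$ is exactly $\epsilon\in\ahom_R(P_1,R)^+$. Setting $W:=\aim(\ahom_R(P_0,R)\to\ahom_R(P_1,R))$ and $M:=\ahom_R(P_1,R)$, this places $\epsilon$ in $\aim(W^+\to M^+)$.

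Finally, taking $c=1\in R-\set{0}$, for every $\varepsilon\in\QQ^+$ and $N\in\NN$ we have $c^{\varepsilon}\otimes\epsilon=\epsilon\in\aim(W^+\to M^+)\subseteq\aim(W^+\to M^+)+p^N M^+$, so by \cref{i-def-EpfModule}, $\epsilon\in W^{\epf}_{M}$. This is the assertion that $\alpha\colon R\to S$ is $\epf$-phantom.

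The main conceptual input is the embedding $S\hookrightarrow R^+$; the perfectoid machinery of earlier sections is not invoked by this plan, and the $F$-finite residue hypothesis appears unused in this direct approach (presumably it is inherited from the section's standing setup). If the actual proof instead goes through the almost algebra $\mbc$ of \cref{m-cons-AndrePerfectoidConstruction} and the results of Andr\'e and Bhatt, I expect it is aimed at a sharper almost-splitting conclusion---one that, paired with \cite[Theorem 3.19]{Heitmann2018}, yields the direct summand conjecture more directly.
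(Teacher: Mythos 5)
Your argument is correct, and it is a genuinely different and considerably more elementary route than the one in the paper. The paper does not use the embedding $S\hookrightarrow R^+$ at all; instead it passes to the $p$-adic completion $\cp{R}$ of $R^+$, uses Andr\'e's construction (\cref{m-thm-pPowerRoots}, \cref{m-thm-PerfectoidAbhyankar}) and Scholze's almost purity theorem to produce towers of almost finite \'etale algebras $\cA_n\to\cB_n\to\cC_n$ over rational subsets of the associated adic spectra, shows that the obstruction class $\alpha_{\cR_n/p^m\cR_n}$ is $\pir{(pg)}$-almost zero, and then invokes the quantitative Riemann Hebbarkeitssatz (\cref{m-cor-QuantitativeRiemannHebbarkeitssatz}) to descend to $\cp{R}/p^m\cp{R}$; the upshot is $\pir{(pg)}\phi^{\vee}\in (\nu^{\vee})\dsm[(R^+)]{n_1}+p^m\dsm[(R^+)]{n_1}$, i.e.\ $\epf$-phantomness with $c=pg$. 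Your observation is sharper and cheaper: since $S$ embeds in $R^+$ over $R$, the pushout of $0\to R\to S\to Q\to 0$ along $R\hookrightarrow R^+$ splits (via $(a,s)\mapsto a+\iota(s)$), so the image of the cocycle in $\ext^1_R(Q,R^+)$ already vanishes on the nose; translating through $\ahom_R(P_i,R^+)\cong R^+\otimes_R\ahom_R(P_i,R)$ and using that $\ahom_R(P_0,R)^+\twoheadrightarrow W^+$ is surjective shows $1\otimes\epsilon\in\aim(W^+\to M^+)$, so $c=1$ works in \cref{i-def-EpfModule} with no $p^N$-error needed. This also means your argument does not use the $F$-finite residue field hypothesis, nor anything perfectoid, and in fact proves the stronger conclusion that the extension is ``plus-phantom.'' The only small infelicity in your write-up is writing $\iota\circ\alpha\circ\epsilon=\epsilon$: strictly this is $\iota_R\circ\epsilon$, the composite of $\epsilon$ with $R\hookrightarrow R^+$, which under the identification is $1\otimes\epsilon\in M^+$; your parenthetical ``as $R$-linear maps $P_1\to R^+$'' makes clear this is what you mean. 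What the paper's more elaborate proof buys that yours does not is a family of explicit almost finite \'etale towers and an almost-splitting of $R\to S$ modulo $p^m$ over each $\cR_n$, which is the kind of refined local data one wants when combining with the perfectoid colon-capturing arguments elsewhere in the paper; for the bare statement of \cref{m-thm-EPFPhantom}, your shortcut suffices.
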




Let us discuss the definition of phantom extension and introduce some notions. See also Dietz \cite[Discussion 2.4]{Dietz2010}.

Suppose that $(R,\mm)$ is a complete local domain. Let $S$ be a module-finite extension of $R$. Then $S/R$ is a finitely generated module over $R$. So it has a minimal $R$ basis $\seq[e]{n_0}$. The set of column vectors of the $n_0\times n_0$ identity matrix form an $R$ basis for $\dsm{n_0}$, which we denote by $\seq[f]{n_0}$. We can map a free module $\dsm{n_0}$ onto $S/R$ by $f_i\mapsto e_i$. This map has a finitely generated kernel. Suppose that it is minimally generated by $n_1$ elements. Then we have a minimal resolution of $S/R$:
\[
  \dsm{n_1}\rightarrow \dsm{n_0}\rightarrow S/R.
\]
Suppose that the map $\dsm{n_1}\rightarrow \dsm{n_0}$ is represented by a $n_0\times n_1$ matrix $\nu$ with all entries in $\mm$. Comparing this resolution with the original exact sequence $0\rightarrow R\rightarrow S\rightarrow S/R\rightarrow 0$, we have the following commutative diagram:
\[
  \xymatrix{
    0\ar[r] & R\ar[r] & S\ar[r] & S/R\ar[r] & 0 \\
    & \dsm{n_1}\ar[r]_{\nu}\ar[u]^{\phi} & \dsm{n_0}\ar[r]\ar[u] & S/R\ar[r]\ar[u] & 0
  },
\]
where $\phi$ is a $1\times n_1$ matrix with entries in $\mm$. Applying $\ahom_R(-,R)$ to the left square, and using the identification $\ahom_R(R^{\oplus l},R)\cong R^{\oplus l}$ where $l$ is some finite positive integer, we get
\[
  \xymatrix{
    R\ar[d]_{\phi^{\vee}} & \ahom_R(S,R)\ar[l]\ar[d] \\
    \dsm{n_1} & \dsm{n_0}\ar[l]^{\nu^{\vee}}
  }.
\]
If we write $(\nu^{\vee})\dsm{n_1}$ for the submodule generated by the set of column vectors of $\nu^{\vee}$, then we have that the extension $R\rightarrow S$ is $\epf$-phantom if $\phi^{\vee}\in \left(\left( \nu^{\vee} \right)\dsm{n_1}\right)^{\epf}$ \cite[Lemma 2.10]{Dietz2010}. Note that our diagrams here do not completely match those in \cite[Discussion 2.4]{Dietz2010}. However, since whether an element in $\ext^1_R(S/R,R)$ is phantom or not is independent of the choice of the resolution of $S/R$ \cite[Discussion 2.3]{Dietz2010}, the test given here is also valid.

Next we discuss some perfectoid constructions we need.
\begin{cons}\label{m-cons-BhattPerfectoidConstruction}
  Let $K$ be a perfectoid field and let $(T[\frac{1}{t}],T)$ be a perfectoid affinoid $K$-algebra, where $t\in \kc$ is some uniformizer that lifts to $K^{\circ\flat}$, i.e., it admits a compatible system of $p$-power roots in $\kc$. Suppose that $g\in T$ lifts to $T^{\flat}$. Let $X$ denote the adic spectrum attached to the pair $(T[\frac{1}{t}],T)$, i.e., $X=\spa(T[\frac{1}{t}],T)$. Let $U_n$ be the rational subset $X \left( \dfrac{t^n}{g} \right)$. Then $\OO_X^+(U_n)$ is $\pir{t}$-almost isomorphic to the $t$-adic completion of $T[\left( \dfrac{t^n}{g}\right)^{1/p^{\infty}}]$. See also \cite[Lemma 6.4]{Scholze2012}.
\end{cons}

We also need a corollary of the quantitative form of Scholze's Hebbarkeitssatz (the Riemann extension theorem) for perfectoid spaces. First we state the theorem. See \cite[Theorem 4.2]{Bhatt2017}, or an alternative description in \cite[Theorem 11.2.1]{Bhatt2017notes}.
\begin{thm}\label{m-thm-QuantitativeRiemannHebbarkeitssatz}
  Let $(T[\frac{1}{t}],T),g\in T,X,U_n$ be as above. For each $m\geqslant 0$, assume that $g\in T$ is a nonzerodivisor in $T/t^mT$. Then the projective system of natural maps
  \[
    \set{f_n:T/g^m\to \OO_X^+(U_n)/g^m }
  \]
  is an almost-pro-isomorphism. In fact, we have the following more precise pair of assertions:
  \begin{enumerate}
  \item The kernels $\aker(f_n)$ are almost $0$ for each $n\geqslant 0$.
  \item For each $k\geqslant 0$ and $c\geqslant p^km$, the transition map $\coker(f_{n+c})\to \coker(f_n)$ has image almost annihilated by $g^{1/p^k}$.
  \end{enumerate}
\end{thm}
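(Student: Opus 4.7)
My plan is to reduce both assertions to explicit computations via Construction~\ref{m-cons-BhattPerfectoidConstruction}, which identifies $\OO_X^+(U_n)$, up to a $\pir{t}$-almost isomorphism, with the $t$-adic completion $T_n^{\wedge}$ of $T_n := T[(t^n/g)^{1/p^\infty}]$. Setting $y_{n,k} := (t^n/g)^{1/p^k}$, the defining relation $g\, y_{n,k}^{p^k} = t^n$ gives the derived identity $g^{1/p^k} y_{n,k} = t^{n/p^k}$, which dictionary-maps the $g$-adic and $t$-adic filtrations on $T_n^\wedge/g^m$; in particular $t^{nm}$ is zero modulo $g^m$, so every element of $T_n^\wedge/g^m$ has an explicit normal form as a finite $T$-linear combination of the $y_{n,k}^j$. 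The change-of-chart formula $y_{n+c,k} = t^{c/p^k} y_{n,k}$, coming from the restriction $\OO_X^+(U_{n+c}) \to \OO_X^+(U_n)$, is the geometric source of the quantitative bound $c \geqslant p^k m$.

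For (1), I would work on the tilted side, where $T^\flat$ is perfect and the computations are transparent. An element $a \in T^\flat$ that dies in $T_n^{\flat,\wedge}/(g^\flat)^m$ satisfies an identity $a = (g^\flat)^m \eta$ for some $t^\flat$-convergent series $\eta$ in the $y_{n,k}^\flat$; clearing $y^\flat$-denominators using the perfect structure and using the hypothesis that $g^\flat$ is a nonzerodivisor on $T^\flat/(t^\flat)^m$ forces $a$ to be $\pir{t^\flat}$-almost contained in $(g^\flat)^m T^\flat$. Untilting via Scholze's tilting equivalence, which is compatible with the $\pir{t}$-almost framework modulo $p$, then delivers the almost vanishing of $\aker(f_n)$ in characteristic $0$.

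For (2), a class in $\coker(f_n)$ coming from $\coker(f_{n+c})$ has, after restriction, a representative of the form $\sum_{k,j} r_{k,j}\, t^{cj/p^k} y_{n,k}^j$ with $j \geqslant 1$. When $c \geqslant p^k m$ one has $cj/p^k \geqslant mj$, and I would combine this with the identity $g^{j/p^k} y_{n,k}^j = t^{nj/p^k}$ to show that multiplication by $g^{1/p^k}$ pushes the whole sum into $T + g^m T_n^\wedge$, up to a $\pir{t}$-almost error. The hard part will be the quantitative bookkeeping forcing the sharp exponent $c \geqslant p^k m$: one must track how the three scales (the modulus $g^m$, the chart shift $c$, and the tower depth $p^k$) interact so that the Frobenius-descent argument on $T^\flat$ closes up exactly at this threshold rather than at some looser bound producing a weaker annihilator, and then absorb all intermediate $\pir{t}$-almost errors arising from Construction~\ref{m-cons-BhattPerfectoidConstruction} and from tilting into the single almost annihilator $g^{1/p^k}$ stated in the theorem.
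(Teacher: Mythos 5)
The paper does not prove this theorem; it states it and cites it directly from Bhatt (\cite[Theorem 4.2]{Bhatt2017}, or \cite[Theorem 11.2.1]{Bhatt2017notes}). So there is no ``paper's own proof'' to compare against, and what you have written is really an attempt to reprove Bhatt's result rather than to match an argument in this paper.

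That said, your plan does follow the broad contour of Bhatt's proof: tilt to characteristic~$p$, use the explicit description of $\OO_X^+(U_n)$ as the $t$-adic completion of $T[(t^n/g)^{1/p^\infty}]$, and extract the bound $c \geqslant p^k m$ from the Frobenius tower. The identities you record --- $g\,y_{n,k}^{p^k} = t^n$, $y_{n+c,k} = t^{c/p^k} y_{n,k}$, and $t^{nm} \equiv 0 \bmod g^m$ --- are correct and are indeed the engine of the computation. But as written this is a plan, not a proof. For~(1), ``clearing $y^\flat$-denominators using the perfect structure'' elides the actual argument that a $t^\flat$-convergent series in the $y_{n,k}^\flat$ landing in $T^\flat$ must, almost, already be a multiple of $(g^\flat)^m$; the nonzerodivisor hypothesis has to be deployed concretely there, and you do not do so. For~(2), you openly defer ``the quantitative bookkeeping forcing the sharp exponent $c \geqslant p^k m$'' --- but that bookkeeping \emph{is} the content of the assertion. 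You record the inequality $cj/p^k \geqslant mj$ and assert that multiplication by $g^{1/p^k}$ should then push the restricted representative into $T + g^m T_n^{\wedge}$, but you never verify it, and the interaction between the different levels $k'$ of the tower, the $t$-adic completion, and the almost error from Scholze's Lemma~6.4 is exactly where a weaker exponent can sneak in. If you intend to carry this out, the cleanest route is to stay entirely in the tilt with the non-completed ring $T^\flat[((t^\flat)^n/g^\flat)^{1/p^\infty}]$, use that $t^\flat$ is almost nilpotent modulo $(g^\flat)^m$ to dispense with completion, prove the $j$-indexed estimate monomial by monomial, and absorb the two layers of $\pir{t}$-almost error (one from the description of $\OO_X^+(U_n)$, one from untilting) only at the very end. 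Until that is done, the proposal has a genuine gap precisely at the quantitative step that the theorem is about.
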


What we really need is the following corollary \cite[Corollary 11.2.2]{Bhatt2017notes}:
\begin{cor}\label{m-cor-QuantitativeRiemannHebbarkeitssatz}
  Let $(T[\frac{1}{t}],T),g\in T,X,U_n$ be as above. Assume that $g\in T$ is a nonzerodivisor in $T/t^mT$ for any $m$. Then for any $T$-complex $Q$, any integer $m\geqslant 0$ and any integer $i$, the natural map
  \[
    \ext^i_T(Q,T/t^mT)\to \varprojlim_n \ext_T^i(Q,\OO_X^+(U_n) / t^m \OO_X^+(U_n))
  \]
  has kernel and cokernel annihilated by $\pir{(tg)}$.
\end{cor}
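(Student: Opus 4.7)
The plan is to derive the corollary from Theorem \ref{m-thm-QuantitativeRiemannHebbarkeitssatz} by applying Lemma \ref{m-lem-AlmostProIsomoprhism} to the derived Hom functor, and then converting the resulting derived inverse limit statement into a statement about the ordinary inverse limit via the Milnor-type exact sequence.

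First I would observe that Theorem \ref{m-thm-QuantitativeRiemannHebbarkeitssatz} (read mod $t^m$) yields an almost-pro-isomorphism of pro-$T$-modules $\{T/t^mT\}_n \to \{\OO_X^+(U_n)/t^m\OO_X^+(U_n)\}_n$, where the source is the constant pro-object. The relevant ideal is $(\pir{(tg)})$: assertion (1) gives that the kernels are annihilated by $\pir{t}$, and assertion (2) implies that for every $k\geqslant 0$ the cokernel transition maps $\coker(f_{n+c})\to \coker(f_n)$ are annihilated by $g^{1/p^k}$ whenever $c\geqslant p^km$, so the cokernel pro-system is $\pir{g}$-almost-pro-zero.

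Next, pick a termwise-projective resolution $P^{\bullet}\to Q$ (truncating appropriately if $Q$ is unbounded for a fixed degree $i$), and apply Lemma \ref{m-lem-AlmostProIsomoprhism} to the $T$-linear functor $F(-)=\ahom_T(P^{\bullet},-)$, interpreted term-by-term and reassembled into a total complex. The lemma yields that the natural map
\[
\rdf\varprojlim_n \ahom_T(P^{\bullet}, T/t^mT)\longrightarrow \rdf\varprojlim_n \ahom_T(P^{\bullet}, \OO_X^+(U_n)/t^m)
\]
is a $\pir{(tg)}$-almost isomorphism on every cohomology group. The source is a constant pro-system so its $i$-th cohomology is simply $\ext^i_T(Q,T/t^mT)$. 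For the target, the derived inverse limit fits into the standard short exact sequence
\[
0\to R^1\varprojlim_n \ext^{i-1}_T(Q,\OO_X^+(U_n)/t^m)\to H^i\bigl(\rdf\varprojlim_n \ahom_T(P^{\bullet},\OO_X^+(U_n)/t^m)\bigr)\to \varprojlim_n \ext^i_T(Q,\OO_X^+(U_n)/t^m)\to 0.
\]

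Finally, I would argue that the $R^1\varprojlim$ term is $\pir{g}$-almost zero. The transition maps in the pro-system $\{\ext^{\bullet}_T(Q,\OO_X^+(U_n)/t^m)\}_n$ are induced by the $\OO_X^+(U_n)/t^m$-transition maps, so the quantitative bound from assertion (2) of Theorem \ref{m-thm-QuantitativeRiemannHebbarkeitssatz} propagates to an almost Mittag-Leffler condition on the Ext pro-system; this forces $R^1\varprojlim$ to be annihilated by $\pir{g}$, and hence by $\pir{(tg)}$. Combining the $\pir{(tg)}$-almost isomorphism from Lemma \ref{m-lem-AlmostProIsomoprhism} with the above exact sequence then shows that the composite $\ext^i_T(Q,T/t^mT)\to \varprojlim_n \ext^i_T(Q,\OO_X^+(U_n)/t^m)$ has both kernel and cokernel annihilated by $\pir{(tg)}$. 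The main obstacle will be this last step: carefully tracking the almost-pro-zero data through the derived-limit spectral sequence and verifying that the almost Mittag-Leffler argument actually produces $\pir{g}$-annihilation on $R^1\varprojlim$ (rather than something strictly weaker), which amounts to working with explicit cocycle representatives and using part (2) with $k$ chosen to absorb the loss at each telescoping step.
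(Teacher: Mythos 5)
The paper itself only cites \cite[Corollary 11.2.2]{Bhatt2017notes} and indicates the proof combines \cref{m-thm-QuantitativeRiemannHebbarkeitssatz} with \cref{m-lem-AlmostProIsomoprhism}, so I am comparing against that argument. Your overall direction is right, but there is a genuine gap in your final step and the whole derived-limit detour is unnecessary. The gap: you assert that assertion (2) of \cref{m-thm-QuantitativeRiemannHebbarkeitssatz} ``propagates to an almost Mittag-Leffler condition on the Ext pro-system'' and that this ``forces $R^1\varprojlim$ to be annihilated by $\pir{g}$.'' There is no standard ``almost Mittag-Leffler implies almost-zero $R^1\varprojlim$'' theorem, and the passage from control on the cokernels of $f_n$ to control on $R^1\varprojlim_n\ext_T^{i-1}(Q,\OO_X^+(U_n)/t^m)$ is not a formal consequence of part (2) alone; it is precisely the content one extracts by re-running \cref{m-lem-AlmostProIsomoprhism} on the functor $\ext_T^{i-1}(Q,-)$. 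A smaller issue is that \cref{m-lem-AlmostProIsomoprhism} as stated takes $F:\mrm{Mod}_R\to\mrm{Mod}_R$, so applying it to $F(-)=\ahom_T(P^{\bullet},-)$ (a functor to complexes) and ``reassembling'' is not covered by the lemma without an additional spectral-sequence argument that you do not supply.

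The cleaner route, which is the intended one, is to apply \cref{m-lem-AlmostProIsomoprhism} directly to the $T$-linear functor $F(-)=\ext^i_T(Q,-)$ for the fixed $i$ in the statement. The source pro-system $\{\ext_T^i(Q,T/t^m)\}_n$ is constant with identity transition maps, so $\rdf\varprojlim_n F(T/t^m)$ is concentrated in degree $0$ and equals $\ext_T^i(Q,T/t^m)$; in particular its $H^1$ vanishes. The target has $H^0=\varprojlim_n\ext_T^i(Q,\OO_X^+(U_n)/t^m)$ and $H^1=R^1\varprojlim_n\ext_T^i(Q,\OO_X^+(U_n)/t^m)$. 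The almost isomorphism on $H^0$ given by the lemma is exactly the assertion of the corollary, and the $H^1$ comparison tells you, for free and without any Mittag-Leffler discussion, that the $R^1\varprojlim$ term is annihilated by $\pir{(tg)}$. This avoids $P^{\bullet}$, the Milnor sequence, and the unproved almost Mittag-Leffler claim entirely. Finally, a remark that affects your first step: the paper's statement of \cref{m-thm-QuantitativeRiemannHebbarkeitssatz} has a typo (it writes $T/g^m$ where Bhatt's source has $T/\pi^m=T/t^m$); your reading ``mod $t^m$'' agrees with the source and is the version needed, but it is not a reformulation of the theorem as printed here, so this discrepancy should be flagged rather than silently absorbed.
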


The proof uses \cref{m-thm-QuantitativeRiemannHebbarkeitssatz} above and \cref{m-lem-AlmostProIsomoprhism} from \cref{sn-Preliminaries}.

We are ready to prove the main theorem of this section.
\begin{proof}[Proof of \cref{m-thm-EPFPhantom}]
  Let $\alpha\in \ext^1_R(S/R,R)$ be the obstruction to split $R\to S$. For any $R$-algebra $T$, we shall write $\alpha_T\in\ext_T^1((S/R)\otimes_R T,T)$ for the corresponding obstruction to splitting the induced map $T\to T\otimes_R S$.

  We know that $R$ is a complete local domain of mixed characteristic $p$ with an $F$-finite residue field. By Cohen's structure theorem, we have a module-finite extension $A\to R$ for some unramified complete regular local ring $A$. Since all maps in $A\to R\to S$ are module-finite extensions, and fraction fields have characteristic $0$, there is some $g\in A$ such that $A_{pg}\rightarrow R_{pg}\rightarrow S_{pg}$ are all finite \'{e}tale extensions.

  Let $A_{\infty,0}$ be constructed as in \cref{ssn-PerfectoidAlgebra} and let $A_{\infty}$ be an integral perfectoid algebra containing a system of compatible $p$-power roots of $g$ by \cref{m-thm-pPowerRoots}. Since $\cp{R}$ is a integral perfectoid $\kc$-algebra admitting a compatible system of $p$-power roots of $g$, there is a map $A_{\infty}\rightarrow \cp{R}$ of perfectoid $\kc$-algebras.

  Since $A_{\infty}$ is a perfectoid $K$-algebra, apply \cref{m-cons-BhattPerfectoidConstruction} to $X_A:=\spa(A_{\infty}[\frac{1}{p}],A_{\infty})$ and write $\cA_n:=\OO_{X_A}^+(X_A(\frac{p^n}{g}))$. Similarly apply the construction to $Y:=\spa(\cp{R}[\frac{1}{p}],\cp{R})$ and write $\cR_n:=\OO_Y^+(Y(\frac{p^n}{g}))$. Since $A_{\infty}\to \cp{R}$, we have an induced map $Y\to X_A$, which in turn induces maps $\cA_n\to\cR_n$ of perfectoid $\kc$-algebras.

  Fix an integer $n$. Since $A_{pg}\rightarrow R_{pg}$ is a finite \'etale extension, the base change map $\cA_n[\frac{1}{pg}]\rightarrow \cA_n\otimes_{A_{pg}}R_{pg}$ is also a finite \'etale extension. Note that $g$ is already inverted in $\cA_n[\frac{1}{p}]$. So $\cA_n[\frac{1}{p}]\rightarrow \cA_n\otimes_{A_p}R[\frac{1}{p}]$ is finite \'etale. By the almost purity theorem \cite[Theorem 7.9 (iii)]{Scholze2012}, the integral closure of $\cA_n\otimes_A R$ in $\cA_n\otimes_{A_p}R[\frac{1}{p}]$,  denoted $\cB_n$, is almost finite \'etale over $\cA_n$. $\cB_n$ is an $R$-algebra. Since both $\cA_n$ and $R[\frac{1}{p}]$ map to $\cR_n[\frac{1}{p}]$, their tensor product over $A$ also maps to $\cR_n[\frac{1}{p}]$. This induces a map between power-bounded elements, that is, $\cB_n\rightarrow \cR_n$. By the same argument applied to $R_{pg}\rightarrow S_{pg}$, the map $\cB_n[\frac{1}{p}]\rightarrow \cB_n\otimes_R S[\frac{1}{p}]$ is a finite \'etale extension. Thus, by the almost purity theorem, there is an $S$-algebra $\cC_n$ almost finite \'etale over $\cB_n$ and it almost splits. Therefore, the map $\cB_n\rightarrow \cB_n\otimes_R S\rightarrow \cC_n$ almost splits.

  Hence $\cB_n\to \cB_n\otimes_R S$ almost splits modulo $p^m$ for any $m$. In particular, $\alpha_{\cB_n/p^m\cB_n}$ is almost zero for all $n$ and $m$. Since we have a $\pir{p}$-almost map $\cB_n\to\cR_n$, we also have that $\alpha_{\cR_n/p^m\cR_n}$ is $\pir{(p^2g)}$-almost zero, hence, $\pir{(pg)}$-almost zero. By \cref{m-cor-QuantitativeRiemannHebbarkeitssatz}, we learn that $\alpha_{\cp{R}/p^m\cp{R}}$ is annihilated by $\pir{(pg)}$ for all $m$.

  Consider the construction discussed above:
  \[
    \xymatrix{
      0\ar[r] & R\ar[r] & S\ar[r] & S/R\ar[r] & 0 \\
      & \dsm{n_1}\ar[r]^{\nu}\ar[u]^{\phi} & \dsm{n_0}\ar[r]\ar[u] & S/R\ar[r]\ar[u] & 0
    }
  \]
  where $\phi$ is a $1\times n_1$ matrix with entries in $\mm$ and $\nu$ is a $n_0\times n_1$ matrix with entries in $\mm$.
  Then the image of $\phi^{\vee}$ in $\dsm[(\cp{R}/p^m\cp{R})]{n_1}$ is $\pir{(pg)}$-almost in the image of $\nu^{\vee}\dsm[(\cp{R}/p^m\cp{R})]{n_1}$. Noting that $\cp{R}/p^m\cp{R}\cong R^+/p^mR^+$, we have
  \[
    \pir{(pg)}\phi^{\vee} \in (\nu^{\vee})\dsm[(R^+)]{n_1} + p^m\dsm[(R^+)]{n_1}
  \]
  for any $m$. By the definition of $\epf$ closure (\cref{i-def-EpfModule}), we have $\phi^{\vee} \in \left((\nu^{\vee})\dsm[R]{n_1}\right)^{\epf}$. Hence, $R\rightarrow S$ is $\epf$-phantom by \cite[Lemma 2.10]{Dietz2010}.
\end{proof} 

\section{The Positive Characteristic Case}\label{sn-p-PositiveCharCase}
In this section, we discuss an analogue of \cref{m-thm-pColonCap} in positive characteristic. Since $p=0$, the situation is slightly different. We will use an arbitrary element instead, and the proof techniques are quite different. The main result is \cref{p-thm-ColonCap}.

\subsection{Intersection of ideals and regular sequences}
We begin by investigating the behaviour of intersections of finitely generated ideals in some non-noetherian rings.
\begin{prop}\label{p-prop-ParamIdealIntersect}
  Let $(R,\mm)$ be an excellent local domain of prime characteristic $p$. Let $R^+$ be its absolute integral closure. Suppose that $\seq{d}$ is a system of parameters of $R$. Then for any proper finitely generated ideal $J$ in $R^+$ and $\seq{n}$, part of a system of parameters, we have
  \[
    \bigcap_{N=1}^{\infty}((x_1,\ldots,x_n)+J^N)R^+ = (x_1,\ldots,x_n)R^+.
  \]
\end{prop}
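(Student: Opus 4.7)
The inclusion $\supseteq$ is immediate. For the reverse, suppose $u \in \bigcap_{N \geqslant 1} ((x_1,\ldots,x_n) + J^N) R^+$ and write $J = (b_1, \ldots, b_m) R^+$. The fundamental input I would invoke is the theorem of Hochster and Huneke that, for $R$ an excellent local domain of positive characteristic, $R^+$ is a balanced big Cohen--Macaulay $R$-algebra; in particular the partial system of parameters $\seq{n}$ is a regular sequence on $R^+$. Passing to $T := R^+/(x_1,\ldots,x_n) R^+$, the desired equality is equivalent to the $J$-adic separation $\bigcap_{N \geqslant 1} J^N T = 0$, so the task reduces to showing that the image of $u$ in $T$ vanishes.

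My main tactic is valuation-theoretic. Since $J$ is a proper finitely generated ideal of the domain $R^+$, there is a maximal ideal $M$ of $R^+$ containing $J$; dominate $R_M^+$ by a valuation ring $V$ of $\fr(R)$ and then pass to a rank-one coarsening so as to obtain a valuation $v \colon \fr(R)^{\times} \to \RR$ satisfying $v \geqslant 0$ on $R^+$ and $v_0 := \min_i v(b_i) > 0$. The existence of such a rank-one $v$ exploits that $J$ is finitely generated (only finitely many strictly positive values $v(b_i)$ must be preserved in a rank-one quotient), and is arranged by choosing a maximal convex subgroup of the value group of $V$ avoiding all the $v(b_i)$.

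Now writing, for each $N$, $u = \sum_i a_i^{(N)} x_i + j^{(N)}$ with $a_i^{(N)} \in R^+$ and $j^{(N)} \in J^N R^+$, one has $v(u - \sum_i a_i^{(N)} x_i) = v(j^{(N)}) \geqslant N v_0 \to \infty$. Hence the approximants $s_N := \sum_i a_i^{(N)} x_i \in (x_1,\ldots,x_n) R^+$ converge to $u$ in the $v$-adic topology. Inside the valuation ring $V$ the finitely generated ideal $(x_1,\ldots,x_n) V$ is principal (generated by whichever $x_i$ achieves the minimum $v$-value) and therefore $v$-adically closed, so this already yields $u \in (x_1,\ldots,x_n) V$.

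The final --- and most delicate --- step is descending from $u \in (x_1,\ldots,x_n) V$ back to $u \in (x_1,\ldots,x_n) R^+$. Naively the contraction of a parameter ideal from a valuation ring only produces its integral closure in $R^+$, so a one-shot descent fails. I expect this is where the big Cohen--Macaulay property of $R^+$ must be used essentially, either by intersecting over a sufficiently large family of such rank-one valuations (exploiting that $R^+$ is the intersection of the valuation rings of $\fr(R)$ containing it) or by descending through a module-finite extension $S \subseteq R^+$ containing $u$ and $b_1, \ldots, b_m$ and applying Krull's intersection theorem inside the Noetherian ring $S/(x_1,\ldots,x_n) S$, with the regularity of $\seq{n}$ on $R^+$ ensuring the contractions $(x_1,\ldots,x_n)R^+ \cap S$ are well-controlled. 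This final descent is the main obstacle.
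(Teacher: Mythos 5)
Your plan is correct up to the point you flag yourself, but the gap you identify at the end is a genuine one and the fallback strategies you sketch do not close it. The valuation route only lands you in $(x_1,\ldots,x_n)V$ for a single rank-one $V$, and even intersecting over the full family of such $V$'s would at best recover a valuative or integral closure of $(x_1,\ldots,x_n)R^+$, not the ideal itself; and the valuations in play are further restricted to those with $v(J)>0$, so the intersection is even less constrained. The Krull-in-$S$ route also stalls at exactly the spot you suspect: from $u\in\bigcap_N\bigl((x_1,\ldots,x_n)+J^N\bigr)R^+$ and a module-finite $S\subseteq R^+$ containing $u$ and the generators of $J$, what you get is $u$ in the \emph{plus closure} $\bigl((x_1,\ldots,x_n)S+J_0^N S\bigr)R^+\cap S$ for each $N$, and this can be strictly larger than $(x_1,\ldots,x_n)S+J_0^N S$, so you cannot invoke Krull's intersection theorem in $S/(x_1,\ldots,x_n)S$ directly.

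The paper's proof goes through tight closure, and that is the missing idea. Having reduced to the Noetherian excellent local domain $S$, one observes that plus closure sits inside tight closure, so $u\in\bigl((x_1,\ldots,x_n)S+J_0^N S\bigr)^*$ for every $N$. Excellence of $S$ gives a single test element $c$ (Hochster--Huneke), so for each Frobenius power $q=p^e$ one has $cu^q\in(x_1^q,\ldots,x_n^q)S+(J_0^N)^{[q]}S$ for all $N$. Now Krull's intersection theorem \emph{does} apply — but to the ideal $(x_1^q,\ldots,x_n^q)S$ with respect to the $J_0$-adic filtration, for each fixed $q$ — giving $cu^q\in(x_1^q,\ldots,x_n^q)S$. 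This says $u\in\bigl((x_1,\ldots,x_n)S\bigr)^*$, and the final step is Smith's theorem that tight closure equals plus closure for parameter ideals, so $u\in(x_1,\ldots,x_n)R^+$. The conceptual point you were missing is that the uniform test element lets you trade the problematic ``plus closure of the $N$-th ideal'' for an honest membership after applying Frobenius, at which point Krull's intersection theorem becomes available in $S$; and the descent back from tight closure to $(x_1,\ldots,x_n)R^+$ is not a valuation argument but Smith's $*=+$ theorem. Your valuation machinery does not appear in the paper's proof and, as you suspected, cannot be made to yield the sharp conclusion on its own.
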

\begin{proof}
  Suppose $J$ is generated by $\seq[r]{h}$. For any $u\in \bigcap_{N=1}^{\infty} ((x_1,\ldots,x_n)+J^N)R^+ $, let $S\subseteq R^+$ be a module-finite extension domain of $R$ containing $\seq{n},\seq[r]{h}$ and $u$. Then $S$ is also excellent. Let $J_0=(\seq[r]{h})S$. Then $u\in \left((\seq{n})S+J_0^NS\right)^+$ for any $N$. So $u\in \left((\seq{n})S+J_0^NS\right)^*$. By a well-known result \cite[Theorem 6.1]{Hochster1994b}, we can find a test element $c$ such that for any $q=p^e$, where $e$ is any positive integer,
  \[
    cu^q\in (x_1^q,\ldots,x_n^q)S+(J_0^N)^{[q]}S.
  \]
  Fix $q$. Since we know that $S/(x_1^q,\ldots,x_n^q)$ is $J_0$-adically separated, we have 
  \[
  \bigcap_{N=1}^{\infty}\left((x_1^q,\ldots,x_n^q)S+(J_0^N\right)^{[q]}S)\subseteq (x_1^q,\ldots,x_n^q)S.
  \]
  So we have
  \[
    cu^q\in (x_1^q,\ldots,x_n^q)S,
  \]
  which shows that $u\in \left((\seq{n})S\right)^{*}$. But $\left((\seq{n})S\right)^{*}=\left((\seq{n})S\right)^+$ by \cite[Theorem 5.1]{Smith1994}. So $u\in (x_1,\ldots,x_n)S^+=(x_1,\ldots,x_n)R^+$.
\end{proof}

In order to have enough different regular sequences, we need to adjoin new variables to $R$ and $R^+$. Hence, we introduce the following notation. Let $\Lambda$ be a possibly infinite index set and let $\set{t_{\lambda}:\lambda\in\Lambda}$ be a set of new variables. For any quasilocal ring $(T,\mm_T)$, the notion $T(t_{\lambda}:\lambda\in\Lambda)$ means the localization of the polynomial ring $T[t_{\lambda}:\lambda\in\Lambda]$ at the ideal $\mm_T T[t_{\lambda}:\lambda\in\Lambda]$. The natural map $T\rightarrow T(t_{\lambda}:\lambda\in\Lambda)$ is faithfully flat. Next, we prove a stronger version of \cref{p-prop-ParamIdealIntersect}.

\begin{prop}\label{p-prop-FFE-ParamIdealIntersect}
  Suppose $(R,\mm)$ is a complete local domain of prime characteristic $p$. Let $(R^+,\mm_{R^+})$ be its absolute integral closure. Suppose that $\seq{d}$ is a system of parameters of $R$. Then for any index set $\Lambda$ and the set of new variables $\set{t_{\lambda}:\lambda\in\Lambda}$, any proper finitely generated ideal $J$ in $R^+$ and $\seq{n}$, part of system of parameters, we have
  \[
    \bigcap_{N=1}^{\infty}((\seq{n})+J^N)T = (x_1,\ldots,x_n)T,
  \]
  where $T=\rpptl$.
\end{prop}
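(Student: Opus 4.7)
My plan is to reduce the identity to a $J$-adic separation statement and then deduce that from \cref{p-prop-ParamIdealIntersect} by a polynomial-ring argument using McCoy's theorem. Setting $\bar R := R^+/(\seq{n})R^+$ and $\bar T := T/(\seq{n})T$, the desired equality is equivalent to $\bigcap_N J^N \bar T = 0$, and the canonical identification $\bar T = \bar R(t_\lambda : \lambda \in \Lambda)$ turns the task into one of propagating the $J$-adic separation of $\bar R$—obtained by simply quotienting the conclusion of \cref{p-prop-ParamIdealIntersect} by $(\seq{n})$—across the localized polynomial extension $\bar R \hookrightarrow \bar R(t_\lambda : \lambda \in \Lambda)$.

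Since $\bar R$ is quasilocal (as a quotient of the quasilocal $R^+$) with maximal ideal $\mm$ containing $J$, a polynomial $s \in \bar R[t_\lambda : \lambda \in \Lambda]$ lies outside $\mm \bar R[t_\lambda : \lambda \in \Lambda]$ exactly when some coefficient of $s$ is a unit in $\bar R$. McCoy's theorem (in its multivariate form) then guarantees that any such $s$ is a nonzerodivisor in $\bar R[t_\lambda : \lambda \in \Lambda]$, and the same property persists after reducing modulo any ideal $I \subseteq \mm$ of $\bar R$, since units of $\bar R$ descend to units of $\bar R/I$. This is the workhorse of the whole argument.

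To finish, I would take $u \in \bigcap_N J^N \bar T$. Since $u$ uses only finitely many variables, write $u = f/s$ with $f, s \in \bar R[t_\lambda : \lambda \in \Lambda_0]$ for some finite $\Lambda_0 \subseteq \Lambda$, choosing $s$ to have a unit coefficient. For each $N$, the membership $u \in J^N \bar T$, after clearing denominators, gives $u = g_N/h_N$ with $g_N \in J^N \bar R[t_\lambda : \lambda \in \Lambda]$ and $h_N$ having a unit coefficient. Equating the two presentations in $\bar T$ yields
\[
k_N(fh_N - sg_N) = 0 \quad \text{in } \bar R[t_\lambda : \lambda \in \Lambda]
\]
for some $k_N$ with a unit coefficient; McCoy makes $k_N$ a nonzerodivisor, so the relation collapses to the honest polynomial identity $fh_N = sg_N$. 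Hence $fh_N \in J^N \bar R[t_\lambda : \lambda \in \Lambda]$; reducing modulo $J^N$ and applying McCoy once more to $\bar h_N$ in $(\bar R/J^N)[t_\lambda : \lambda \in \Lambda]$ (still with a unit coefficient) forces $\bar f = 0$, so $f \in J^N \bar R[t_\lambda : \lambda \in \Lambda]$. Faithful flatness of the polynomial extension $\bar R[t_\lambda : \lambda \in \Lambda_0] \hookrightarrow \bar R[t_\lambda : \lambda \in \Lambda]$ contracts this to $f \in J^N \bar R[t_\lambda : \lambda \in \Lambda_0]$, so every coefficient of $f$ lies in $J^N \bar R$. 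Letting $N$ vary, every coefficient lies in $\bigcap_N J^N \bar R = 0$, so $f = 0$ and hence $u = 0$.

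The main obstacle I expect is the clean handling of the localization ambiguity: because $\bar R$ may have zero divisors, the equality $u = f/s = g_N/h_N$ in $\bar T$ only produces a relation up to multiplication by some $k_N$ outside the maximal ideal, and the entire argument turns on using McCoy—applicable precisely because $k_N$ carries a unit coefficient—to remove this ambiguity and reduce to an honest polynomial equation. Once that collapse is in place, the rest is routine bookkeeping with faithful flatness.
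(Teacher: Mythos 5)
Your proposal is correct and takes essentially the same route as the paper's proof: both clear denominators, use the McCoy-type fact that a polynomial over the quasilocal ring with a unit coefficient is a nonzerodivisor modulo any ideal contained in the maximal ideal, reduce to a coefficient-wise statement, and invoke \cref{p-prop-ParamIdealIntersect}. The only cosmetic difference is that you pass to the quotient $R^+/(\seq{n})R^+$ and phrase everything as $J$-adic separation, while the paper runs the identical argument upstairs in $R^+[t_\lambda:\lambda\in\Lambda]$ with the ideals $((\seq{n})+J^N)R^+$.
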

\begin{proof}
  Write $\mf{a}_N$ for the ideal $((\seq{n})+J^N)R^+$. Let $u$ be an element in $\bigcap_{N=1}^{\infty}\mf{a}_NT$. Then $u$ is a rational function in $t_{\lambda}$'s. Clear the denominator and assume that $u$ is a polynomial in $\rpbtl$. For each $N$, we know that $u\in\mf{a}_NT$, and, again by clearing denominators, there is some polynomial $g_N\in \rpbtl$ such that $g_Nu\in \mf{a}_N\rpbtl$. We also know that $g_N$ is a unit in $T$. Hence $g_N\not\in \mm_{R^+} \rpbtl$, which means that at least one coefficient of $g_N$ is not in $\mm_{R^+}R^+$, i.e., is a unit in $R^+$. Since $\mf{a}_N\subseteq\mm_{R^+}$, that coefficient continues to be a unit in $\rpbtl/\mf{a}_N\rpbtl$. Hence $g_N$ is actually a nonzerodivisor on $\rpbtl/\mf{a}_N\rpbtl$. Therefore, we conclude that $u\in \mf{a}_N\rpbtl$. Since we are now in the polynomial case, this is equivalent to saying that each coefficient of $u$ is in $\mf{a}_NR^+$. Therefore, each coefficient of $u$ is in $\cap_{N=1}^{\infty}\mf{a}_NR^+\subseteq (\seq{n})R^+$ (\cref{p-prop-ParamIdealIntersect}). So $u\in (\seq{n})\rpbtl\Rightarrow u\in (\seq{n})T$.
\end{proof}

For the proof of the main theorem of this section, we need to study the intersection of ideals containing elements of the form $x_n-t_{\lambda_n}x_{n+1}$ where $x_i$'s are part of a system of parameters.

\begin{prop}\label{p-prop-RS-PolynomialIntersect}
  Let $(B,\mm_B)$ be a quasilocal ring, $x, y\in \mm_B$ a permutable regular sequence on  $B$ and $t$ an indeterminate. Then for any positive integer $N$, we have
  \begin{enumerate}
  \item $(x-yt,y^N)B(t)\cap B[t]=(x-yt,y^N)B[t]$;
  \item $(x-yt,y^N)B[t]\cap B=(x,y)^NB$.
  \end{enumerate}
\end{prop}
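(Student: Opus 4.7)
For part (1), my plan is to rephrase the inclusion as the injectivity of the localization map $B[t]/(x-yt, y^N)B[t] \to B(t)/(x-yt, y^N)B(t)$, which is equivalent to showing that every $g\in B[t]\setminus \mm_B B[t]$ is a nonzerodivisor on $B[t]/(x-yt, y^N)B[t]$. I would induct on $N$. For $N=1$, the ideal $(x-yt, y)$ equals $(x,y)$, so the quotient is $(B/(x,y))[t]$; since some coefficient of $g$ is a unit in $B$, and hence a unit in $B/(x,y)$, McCoy's theorem gives that $g$ is a nonzerodivisor. For the inductive step, I would first verify that $(x-yt, y)$ is a regular sequence on $B[t]$: the nonzerodivisor property of $x-yt$ on $B[t]$ follows from its leading coefficient $-y$ being a nonzerodivisor on $B$, and a direct coefficient chase shows $y$ is a nonzerodivisor on $B[t]/(x-yt)B[t]$. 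The Koszul-type identity $y^{N-1}B[t]\cap (x-yt, y^N)B[t] = y^{N-1}(x-yt, y)B[t]$ then produces a short exact sequence
\begin{equation*}
0\to B[t]/(x-yt,y)B[t] \xrightarrow{\cdot y^{N-1}} B[t]/(x-yt,y^N)B[t] \to B[t]/(x-yt,y^{N-1})B[t]\to 0,
\end{equation*}
and a standard diagram chase transfers the nonzerodivisor property from the outer terms to the middle.

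For part (2), the inclusion $\supseteq$ is immediate from $x^i y^{N-i}\equiv y^N t^i \pmod{x-yt}$ for each $0\leq i\leq N$. For the containment $\subseteq$, given $f\in B$ with $f=\alpha(t)(x-yt)+\beta(t)y^N$, where $\alpha=\sum a_i t^i$ and $\beta=\sum b_i t^i$, I would equate coefficients of $t^i$ to obtain $a_0 x+b_0 y^N=f$ and $y a_{i-1}=x a_i+b_i y^N$ for $i\geq 1$. Iterating this recursion (an easy induction on $k$) gives
\begin{equation*}
y^k a_0 = x^k a_k + \sum_{i=1}^k b_i x^{i-1} y^{N+k-i}.
\end{equation*}
Choosing $n$ so large that $a_n=0$, multiplying by $x$, and adding $b_0 y^{N+n}$ yields
\begin{equation*}
y^n f = \sum_{i=0}^n b_i x^i y^{N+n-i} \in (x,y)^{N+n}.
\end{equation*}

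The main obstacle is then to deduce $f\in(x,y)^N$ from $y^n f\in(x,y)^{N+n}$. This rests on the key lemma: for any $k\geq 0$, if $y f\in (x,y)^{k+1}$, then $f\in (x,y)^k$. To prove it, I would expand $yf=c_0 y^{k+1}+\sum_{i=1}^k c_i x^i y^{k+1-i}+c_{k+1}x^{k+1}$ and observe that $c_{k+1}x^{k+1}\in yB$; the permutable regular sequence property forces $c_{k+1}\in yB$, and cancelling the nonzerodivisor $y$ from the resulting equation yields $f\in(x,y)^k$. Iterating this lemma $n$ times concludes the proof. This lemma is the delicate point because $B$ is only quasilocal and is not assumed Noetherian, so appeals to the structure of $\gr_{(x,y)}B$ via standard Noetherian tools are unavailable; the direct regular-sequence argument, however, works in full generality.
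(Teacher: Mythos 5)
Your proof is correct, but it takes a genuinely different route in both parts, so let me note how they compare. For (1), the paper argues directly at the level of regular sequences: it observes that $y, x-yt, g(t)$ is a regular sequence on $B[t]$ for any $g$ that becomes a unit in $B(t)$, then invokes the classical fact that raising an element of a regular sequence to a power preserves regularity, and concludes at once from the regularity of $y^N, x-yt, g(t)$. Your induction on $N$ via the Koszul identity and the short exact sequence is, in effect, a self-contained proof of that classical fact specialized to this setting — longer, but more explicit and just as valid. For (2), both you and the paper equate coefficients in $f = \alpha(t)(x-yt) + \beta(t)y^N$, but the paper runs a descending induction over the coefficients of $\alpha$ to place each of them in $(x,y)^{N-1}$ and then reads off $f\in(x,y)^N$ from the constant-term relation; you instead telescope the same recursion forward to reach the single global relation $y^n f \in (x,y)^{N+n}$ and then strip off powers of $y$ via your cancellation lemma $yf\in(x,y)^{k+1}\Rightarrow f\in(x,y)^k$. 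The permutable regular-sequence hypothesis enters at the analogous point in both arguments — to cancel a power of $x$ or $y$ from a monomial-ideal membership — but your isolation of the cancellation lemma is a clean modularization of a step the paper does inline. Both routes are elementary and, as you correctly emphasize, require no Noetherian input.
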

\begin{proof}
  For (1), note that $(x-yt,y)B(t)=(x,y)B(t)$. Any polynomial $g(t)\in B[t]$ that is invertible in $B(t)$ has a nonzero coefficient that is a unit. Hence $g(t)$ is a nonzerodivisor on $B[t]/(x,y)B[t]$. So $y,x-yt,g(t)$ form a regular sequence on $B[t]$, which implies that $y^N,x-yt,g(t)$ form a regular sequence on $B[t]$. Suppose that $p(t)\in (x-yt,y^N)B(t)\cap B[t]$; then we can clear the denominators to get $ p(t)g(t)=a(t)y^N+b(t)(x-yt) $ for some $a(t),b(t),g(t)\in B[t]$ with $g(t)$ invertible in $B(t)$. Then since $y^N,x-yt,g(t)$ form a regular sequence on $B[t]$, we conclude that $p(t)\in (x-yt,y^N)B[t]$. The converse containment is obvious.

  For (2), suppose that $b\in (x-yt,y^N)B[t]\cap B$. Then $b=\alpha(t) (x-yt)+\beta(t) y^N$ for some polynomial $\alpha(t),\beta(t)\in B[t]$. Assume that $\alpha(t)=c_ht^h+\cdots+c_0$ for elements $c_0,\seq[c]{h}\in B$. By comparing the coefficients of $b=\alpha(t) (x-yt)+\beta(t) y^N$, we have
  \begin{align}
    -c_hy &\in (y^N)B, \label{eq:p-tpcc-1}\\
    -c_{i-1}y+c_ix &\in (y^N)B, \quad 1\leqslant i\leqslant h, \label{eq:p-tpcc-2}\\
    b-c_0x&\in (y^N)B.\label{eq:p-tpcc-2.5}
  \end{align}

  \begin{clm*}
    For each coefficient, we have $c_{h-i}\in (x,y)^{N-1}B$ for $0\leqslant i\leqslant h$.
  \end{clm*}
  We prove this by induction on $i$. As we have $c_h\in (y^{N-1})B$ from \eqref{eq:p-tpcc-1} as $y$ is, by assumption, a nonzerodivisor on $B$, the case $i=0$ is obvious.

  Assume that the claim is true for $c_{h-i+1}$, i.e., $c_{h-i+1}\in (x,y)^{N-1}B$. So we write 
  \[
  c_{h-i+1} = \gamma_{N-1}x^{N-1}+\gamma_{N-2}x^{N-2}y+\cdots+\gamma_0y^{N-1}
  \]
  for some $\gamma_0,\seq[\gamma]{N-1}\in B$.

  By \eqref{eq:p-tpcc-2} we have
  \[
    -c_{h-i}y+c_{h-i+1}x =y^N\mu,
  \]
  for some $\mu\in B$. Hence we have
  \begin{align*}
    c_{h-i}y = \gamma_{N-1}x^N+\gamma_{N-2}x^{N-1}y+\cdots+\gamma_0xy^{N-1}&-\mu y^N \\
    \Rightarrow  \left(c_{h-i}+ \mu y^{N-1}-\gamma_0 xy^{N-2}-\gamma_1x^2y^{N-3}-\cdots - \gamma_{N-2}x^{N-1} \right) y &\in (x^N)B.
  \end{align*}

  Using the assumption that $y$ is a nonzerodivisor on $B/(x^N)B$, we conclude that
  \[
    c_{h-i} \in (x^N,x^{N-1}, x^{N-2}y,\cdots,xy^{N-2},y^{N-1})B = (y^{N-1},xy^{N-2},...,x^{N-1})B = (x,y)^{N-1}B.
  \]

  Therefore, the claim is proved. 
  
  By the claim above, we have $c_0\in (x,y)^{N-1}B\Rightarrow c_0x\in (x,y)^NB$. So \eqref{eq:p-tpcc-2.5} implies that $b\in (x,y)^NB$.

  Conversely, it is trivial that $y^N\in (x-yt,y^N)B[t]$. If $x^ky^{N-k}\in (x-yt,y^N)B[t]$ for some $0\leqslant k\leqslant N$, then $x^{k+1}y^{N-k-1}=(x-yt)x^ky^{N-k-1}+x^ky^{N-k}t\in (x-yt,y^N)B[t]$. So inductively we have $(x,y)^N\subseteq (x-yt,y^N)B[t]$. Hence they are equal.
\end{proof}

We next observe that
\begin{lem}\label{p-lem-RS-FaithfullyFlat}
  Let $(R,\mm,K)$ be a $d$-dimensional complete local domain of prime characteristic $p$. Let $R^+$ be its absolute integral closure. Let $\Lambda$ be an index set and let $\set{t_{\lambda}:\lambda\in\Lambda}$ be a set of variables. Let $\seq{d}$ be a system of parameters of $R$. Then  $\seq{n}$, where $1\leqslant n\leqslant d$, is a regular sequence on $T=\rpptl$:
\end{lem}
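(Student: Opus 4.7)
The plan is to reduce this to the theorem of Hochster--Huneke that $R^+$ is a big Cohen--Macaulay algebra in positive characteristic, combined with the fact that regular sequences are preserved under flat extensions.

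First, I would invoke the Hochster--Huneke theorem: since $R$ is an excellent (in fact complete) local domain of characteristic $p>0$, its absolute integral closure $R^+$ is Cohen--Macaulay in the sense that every system of parameters of $R$ is a regular sequence on $R^+$. In particular, $x_1,\ldots,x_n$ is a regular sequence on $R^+$ for any $1\leqslant n\leqslant d$. This is the same input that underlies the equality $I^*=IR^+\cap R$ used in the proof of \cref{p-prop-ParamIdealIntersect} via \cite{Smith1994}.

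Next I would argue that $T=\rpptl$ is flat over $R^+$. The polynomial extension $R^+\to\rpbtl$ is free, hence faithfully flat. The quotient $\rpbtl/\mm_{R^+}\rpbtl\cong (R^+/\mm_{R^+})[t_\lambda:\lambda\in\Lambda]$ is a domain, so $\mm_{R^+}\rpbtl$ is a prime ideal, and $T$ is the localization of $\rpbtl$ at this prime. In particular, $T$ is quasilocal with maximal ideal $\mm_T=\mm_{R^+}T$, and the composition $R^+\to T$ is flat.

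Finally, since $x_1,\ldots,x_n$ is a regular sequence on $R^+$, and $R^+\to T$ is flat with $(x_1,\ldots,x_n)T\subseteq \mm_T\neq T$, a standard fact in commutative algebra (see e.g.\ \cite[Corollary 1.1.3]{Bruns1998}, or simply a base change argument via the Koszul complex) yields that $x_1,\ldots,x_n$ remains a regular sequence on $T$. There is no serious obstacle here; the only nontrivial ingredient is the Hochster--Huneke theorem, which is already available in the positive characteristic setting of this section.
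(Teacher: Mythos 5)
Your proof is correct and takes essentially the same route as the paper: invoke the Hochster--Huneke theorem that $R^+$ is a big Cohen--Macaulay $R$-algebra, and then transfer the regular sequence along the faithfully flat map $R^+ \to T$. The paper's proof states the flat base change step in the equivalent form $(\seq{n-1})T:_T x_n = ((\seq{n-1})R^+:_{R^+} x_n)T = (\seq{n-1})T$ and checks $x_1$ is a nonzerodivisor, while you appeal to the standard Koszul/flat-local-homomorphism argument and explain why $T$ is flat quasilocal over $R^+$; these are the same computation, with your version spelling out the flatness a bit more explicitly.
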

\begin{proof}
  We know that $R^+$ is a big Cohen-Macaulay $R$-algebra, so $\seq{d}$ is a regular sequence on $R^+$. Since $R^+\rightarrow T$ is faithfully flat, we know that $x_1$ is a nonzerodivisor in $T$ and
  \[
    (\seq{n-1})T:_T x_n = ((\seq{n-1})R^+:_{R^+} x_n)T = (\seq{n-1})T.
  \]
  Therefore, $\seq{d}$ is a regular sequence on $T$.
\end{proof}

The following results are standard facts about regular sequences.
\begin{lem}\label{p-lem-RS-Transposition}
  Let $B$ be a ring and $x,y\in B$ be a regular sequence on $B$. If $y$ is a nonzerodivisor on $B$, then $y,x$ is also a regular sequence.
\end{lem}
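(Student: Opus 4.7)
The plan is to unpack what a regular sequence means and verify the two requirements for $y, x$. By hypothesis, $x$ is a nonzerodivisor on $B$ and $y$ is a nonzerodivisor on $B/xB$; additionally we are told $y$ is a nonzerodivisor on $B$. To conclude that $y, x$ is a regular sequence, the first condition (that $y$ is a nonzerodivisor on $B$) is immediate from the extra hypothesis, so the only thing left to prove is that $x$ is a nonzerodivisor on $B/yB$.

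For this, I would argue directly. Suppose $a \in B$ satisfies $xa \in yB$, so $xa = yb$ for some $b \in B$. The equation $yb = xa$ says $yb \in xB$, and since $y$ is a nonzerodivisor on $B/xB$, this forces $b \in xB$. Write $b = xc$ with $c \in B$. Then $xa = yxc = xyc$, and because $x$ is a nonzerodivisor on $B$ we may cancel $x$ to obtain $a = yc \in yB$. Hence any element of $B$ annihilated by $x$ modulo $yB$ already lies in $yB$, which is exactly the statement that $x$ is a nonzerodivisor on $B/yB$.

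There is no real obstacle here: the argument is a two-line diagram chase using the definition of regular sequence together with the cancellation property provided by the extra assumption that $y$ is a nonzerodivisor on $B$. The point worth emphasizing is that the extra hypothesis is genuinely needed; without it, transposition of a regular sequence can fail in the non-noetherian setting, which is precisely why the lemma is being recorded for later use in the non-noetherian rings $T = \rpptl$ where \cref{p-lem-RS-FaithfullyFlat} guarantees the needed nonzerodivisor property for each element of a system of parameters.
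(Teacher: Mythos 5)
Your argument is correct and is essentially the same as the paper's: you take $xa = yb$, use regularity of $x,y$ to conclude $b \in xB$, write $b = xc$, and cancel the nonzerodivisor $x$ from $xa = xyc$ to get $a \in yB$. The only cosmetic difference is notation ($a,b,c$ versus the paper's $\alpha, \beta, \beta'$), and you add a helpful remark explaining why the extra hypothesis on $y$ is needed; the paper simply states it without comment.
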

\begin{proof}
  The only thing that needs a proof is that $x$ is a nonzerodivisor on $B/(y)B$. Suppose that $\alpha x=\beta y$ for some $\alpha,\beta\in B$. Then by assumption $\beta\in (x)B$. So we can write $\beta = x\beta'$ for some $\beta'\in B$. Then $x(\alpha-\beta' y)=0$ and $x$ is a nonzerodivisor. So $\alpha = \beta' y\Rightarrow \alpha \in (y)B$.
\end{proof}

\begin{cor}\label{p-cor-RS-Permutability}
  Let $B$ be a ring and $y,\seq{n}$ be elements of $B$. If both $\seq{n}$ and $y,\seq{n}$ are regular sequences on $B$, then so is $\seq{k},y,x_{k+1},\ldots,x_n$ where $0\leqslant k\leqslant n$. In particular, when $k=n$, $\seq{n},y$ form a regular sequence on $B$.
\end{cor}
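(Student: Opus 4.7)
The plan is to proceed by induction on $k$, using \cref{p-lem-RS-Transposition} as the key input. The base case $k=0$ is precisely the hypothesis that $y,\seq{n}$ is a regular sequence on $B$, and the case $k=n$ will give the ``in particular'' part of the statement.

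For the inductive step, assume that $\seq{k-1},y,x_k,x_{k+1},\ldots,x_n$ is a regular sequence on $B$; I will show $\seq{k},y,x_{k+1},\ldots,x_n$ is too. The natural move is to pass to the quotient $\overline{B}:=B/(\seq{k-1})B$ (which makes sense because $\seq{k-1}$ is regular on $B$ by hypothesis, as it is an initial segment of $\seq{n}$). On $\overline{B}$ the induction hypothesis says that $y,x_k,x_{k+1},\ldots,x_n$ is a regular sequence, so in particular $y,x_k$ is. Separately, the hypothesis that $\seq{n}$ is a regular sequence on $B$ gives that $x_k$ is a nonzerodivisor on $\overline{B}$.

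Now apply \cref{p-lem-RS-Transposition} to $\overline{B}$ with the pair $(y,x_k)$: this yields that $x_k,y$ is a regular sequence on $\overline{B}$. The further elements $x_{k+1},\ldots,x_n$ continue to form a regular sequence on $\overline{B}/(y,x_k)\overline{B}$ because that quotient equals $B/(\seq{k-1},y,x_k)B$, and regularity there is exactly part of the induction hypothesis. Hence $x_k,y,x_{k+1},\ldots,x_n$ is a regular sequence on $\overline{B}$, which lifts to say that $\seq{k},y,x_{k+1},\ldots,x_n$ is a regular sequence on $B$. This closes the induction.

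There is no real obstacle: the argument is purely a bookkeeping transposition, with the only substantive input being the nonzerodivisor hypothesis needed to invoke \cref{p-lem-RS-Transposition}, which is supplied ``for free'' by the assumption that $\seq{n}$ is itself a regular sequence on $B$.
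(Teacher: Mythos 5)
Your proof is correct and follows essentially the same route as the paper's: induct on $k$, pass to the quotient $B/(\seq{k-1})$, apply \cref{p-lem-RS-Transposition} to swap $y$ and the next $x$ using the nonzerodivisor hypothesis supplied by $\seq{n}$, and observe the tail stays regular on the appropriate quotient. The only difference is cosmetic indexing (you step from $k-1$ to $k$, the paper from $k$ to $k+1$).
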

\begin{proof}
  We prove this by induction on $k$. The base case $k=0$ is one of the assumptions. Now assume that $\seq{k},y,x_{k+1},\ldots,x_n$ is a regular sequence on $B$. Let $A=B/(\seq{k})$. We know that $y,x_{k+1}$ form a regular sequence, and $x_{k+1}$ is a nonzerodivisor on $A$. By \cref{p-lem-RS-Transposition}, $x_{k+1},y$ also form a regular sequence on $A$. It is obvious that $x_{k+2},\ldots,x_n$ continue to be a regular sequence on $A/(x_{k+1},y)$. Therefore $\seq{k+1},y,x_{k+2},\ldots,x_n$ is a regular sequence on $B$.
\end{proof}

The main result we want to prove is about elements of the form $x_n-t_{\lambda_n}x_{n+1}$ in the ring $T$. 
\begin{thm}\label{p-thm-RSonT}
  Let $(R,\mm,K)$ be a $d$-dimensional complete local domain of prime characteristic $p$. Let $R^+$ be its absolute integral closure. Let $\Lambda$ be an index set and let $\set{t_{\lambda}:\lambda\in\Lambda}$ be a set of variables. Let $T=\rpptl$. For any system of parameters $\seq{d}$ of $R$, let $z_i=x_i-t_{\lambda_i}x_{i+1}$ for some $t_{\lambda_i}\in \set{t_{\lambda}:\lambda\in \Lambda}$ ($\lambda_i\neq\lambda_j$ if $i\neq j$ and $1\leqslant i<d$) and $\set{\seq[y]{h}},\set{\seq[w]{l}}$ be two subsets of $X_n:=\set{x_{n+2},x_{n+3},...,x_d}$ where $0\leqslant h,l\leqslant d-n-1$. 
  Then we have
  \begin{enumerate}
  \item $\seq[y]{h},\seq[z]{n},x_{n+1}$ form a regular sequence on $T$, and
  \item $\bigcap_{N=1}^{\infty}\left( (\seq[y]{h},\seq[z]{n})+(x_{n+1},\seq[w]{l})^N\right)T=\left( \seq[y]{h},\seq[z]{n} \right)T$
  \end{enumerate}
  for any $0\leqslant n\leqslant d-1$. 
\end{thm}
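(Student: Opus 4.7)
The plan is to prove (1) and (2) simultaneously by induction on $n$, peeling off the element $z_n$ together with the variable $t_{\lambda_n}$ at each step. The base case $n=0$ handles (1) by noting that $R^+$ is a big Cohen--Macaulay algebra, so any reordering of $x_1,\ldots,x_d$ is a regular sequence on $R^+$, and then applying Lemma~\ref{p-lem-RS-FaithfullyFlat}; (2) reduces to Proposition~\ref{p-prop-FFE-ParamIdealIntersect} applied with the part-of-sop $y_1,\ldots,y_h$ and the proper finitely generated ideal $(x_1,w_1,\ldots,w_l)R^+$.

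For the inductive step, write $T_1=R^+(t_\lambda:\lambda\in\Lambda,\lambda\neq\lambda_n)$ so that $T=(T_1[t_{\lambda_n}])_\mathfrak{m}$, and set $A=T_1/(y_1,\ldots,y_h,z_1,\ldots,z_{n-1})T_1$. For (1), two applications of the inductive hypothesis in $T_1$ (with $y$-sets $\{y_1,\ldots,y_h\}$ and $\{y_1,\ldots,y_h,x_{n+1}\}$, both contained in $X_{n-1}$) together with Corollary~\ref{p-cor-RS-Permutability} show that $x_n,x_{n+1}$ is permutable regular on $A$. Then $z_n=x_n-t_{\lambda_n}x_{n+1},x_{n+1}$ forms a regular sequence on the polynomial ring $A[t_{\lambda_n}]$: $x_{n+1}$ is a nonzerodivisor on $A[t_{\lambda_n}]$ (since it is on $A$), and modulo $x_{n+1}$ one reduces to $(A/x_{n+1}A)[t_{\lambda_n}]$ in which $z_n=x_n$ is a nonzerodivisor. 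Faithful flatness of the localization $A[t_{\lambda_n}]\to\bar T:=(A[t_{\lambda_n}])_\mathfrak{m}=T/(y_1,\ldots,y_h,z_1,\ldots,z_{n-1})T$ then transports this regular sequence to $T$, completing~(1).

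For (2), given $u\in\bigcap_N(I_n+J^N)T$ with $I_n=(y_1,\ldots,y_h,z_1,\ldots,z_n)$ and $J=(x_{n+1},w_1,\ldots,w_l)$, the image $\bar u\in\bar T$ lies in $\bigcap_N((z_n)+J^N)\bar T$. The regular-sequence fact from (1) yields an isomorphism $A[t_{\lambda_n}]/(z_n)\cong A[x_n/x_{n+1}]\subseteq A_{x_{n+1}}$ via $t_{\lambda_n}\mapsto x_n/x_{n+1}$, and since $t_{\lambda_n}$ is a unit in $\bar T$ one has the ideal equality $(x_{n+1},w_1,\ldots,w_l)\bar T/(z_n)=(x_n,x_{n+1},w_1,\ldots,w_l)\bar T/(z_n)$. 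The inductive hypothesis for (2) applied in $T_1$ with $y'$-set $\{y_1,\ldots,y_h\}$ and $w'$-set $\{x_{n+1},w_1,\ldots,w_l\}$ (both subsets of $X_{n-1}$) yields $\bigcap_N(x_n,x_{n+1},w_1,\ldots,w_l)^NA=0$, and a coefficient analysis---representing elements of the blowup $A[x_n/x_{n+1}]$ as homogeneous forms $g(x_n,x_{n+1})/x_{n+1}^D$ with $A$-coefficients---transfers this separation to $\bar T/(z_n)$, forcing $\bar u=0$ there and hence $u\in I_nT$.

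The main obstacle is this final separation step: carrying the intersection-zero statement through the blowup-type ring $A[x_n/x_{n+1}]$ and into its localization $\bar T/(z_n)$ in the non-noetherian setting. While the isomorphism $A[t_{\lambda_n}]/(z_n)\cong A[x_n/x_{n+1}]$ is a clean consequence of $x_n,x_{n+1}$ being a regular sequence on $A$, establishing the needed separation demands careful bookkeeping: expressing an element in the intersection as $g/x_{n+1}^D$ and showing, coefficient by coefficient, that each $A$-coefficient of $g$ is divisible by arbitrarily high powers of an ideal already known to be Hausdorff in $A$ (from the inductive hypothesis), and therefore vanishes.
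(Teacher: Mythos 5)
Your overall architecture --- simultaneous induction on $n$, peeling off $z_n$ together with the variable $t_{\lambda_n}$, the same base cases, and the use of \cref{p-cor-RS-Permutability} --- is exactly the paper's. Part (1) is essentially correct; note only that your argument (``$x_{n+1}$ is a nonzerodivisor, and $z_n\equiv x_n$ is a nonzerodivisor modulo $x_{n+1}$'') establishes the regular sequence $x_{n+1},z_n$ rather than $z_n,x_{n+1}$, so you still need that $z_n$ is a nonzerodivisor on $A[t_{\lambda_n}]$ (compare leading coefficients, using that $x_{n+1}$ is a nonzerodivisor on $A$) together with \cref{p-lem-RS-Transposition}.

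The genuine gap is in part (2), at precisely the step you flag as the main obstacle, and it is not mere bookkeeping. You must show $\bigcap_N J^N C=0$ for $C=\bar T/(z_n)\bar T$, a localization of $A[\theta]$ with $\theta=x_n/x_{n+1}$, knowing only that $\bigcap_N(x_n,x_{n+1},w_1,\ldots,w_l)^NA=0$. No general transfer principle applies here: in the slightly larger ring $A_{x_{n+1}}$ the corresponding intersection is the whole ring, since $x_{n+1}\in J$ becomes a unit; so any correct argument must exploit the precise structure of $A[\theta]$ and of the localization at $\mm_{R^+}$. Your proposed coefficient analysis does not achieve this. First, the $A$-coefficients of a homogeneous form $g$ with $\bar u=g/x_{n+1}^D$ are not well defined --- by quasi-regularity they are determined only modulo $(x_n,x_{n+1})A$ --- so ``each coefficient lies in every power and hence vanishes'' is not a meaningful conclusion. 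Second, and more seriously, a witness for $\bar u\in J^NC$ involves auxiliary elements of $C$ whose $\theta$-degrees and denominators are unbounded in $N$; clearing them multiplies $g$ by an $N$-dependent power of $x_{n+1}\in J$, so one only obtains $g\,x_{n+1}^{e_N}\in(x_n,x_{n+1},w_1,\ldots,w_l)^NA$ with $e_N$ uncontrolled, from which nothing about $g$ can be extracted. What is needed is a degree-controlled contraction statement, and this is exactly what the paper supplies with \cref{p-prop-RS-PolynomialIntersect}: working in the honest polynomial ring over $Q$ (where coefficients are unique), it proves $(x-yt,y^N)B[t]\cap B=(x,y)^NB$ by a downward induction on coefficients, and then reduces a general $u$ of fixed $t$-degree $h$ to an element of $Q$ by multiplying by the $h$-th power of the second parameter and dividing by the ``monic'' linear form, removing that factor afterwards via part (1). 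To close your argument you would need to prove this contraction lemma or an equivalent; as written the key step is asserted rather than proved.
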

\begin{proof}
  We prove both claims at the same time by induction on $n$. The base case $n=0$ is trivial: when $n=0$, (1) is the conclusion of \cref{p-lem-RS-FaithfullyFlat} and (2) is the conclusion of \cref{p-prop-FFE-ParamIdealIntersect}.

  Assume that both claims are true for $n$. We want to prove the case of $n+1$. So we let both $\set{\seq[y]{h}}$ and $\set{\seq[w]{l}}$ be subsets of $X_{n+1}$. We first prove (1). Note that $X_{n+1}\subseteq X_n$. So $\seq[y]{h},\seq[z]{n},x_{n+1}$ is a regular sequence by the induction hypothesis. Since $(\seq[z]{n},x_{n+1})T=(\seq[x]{n+1})T$, it is trivial that $\seq[y]{h},\seq[z]{n},x_{n+1},x_{n+2}$ form a regular sequence on $T$. Let $S=T/(\seq[y]{h},\seq[z]{n})T$. If there is some $\alpha\in S$ such that $\alpha z_{n+1}=0$, then modulo $x_{n+1}$ we see that $\alpha t_{n+1}x_{n+2}=0$. Hence $\alpha = x_{n+1}\beta$. Since $x_{n+1}$ is a nonzerodivisor and $\beta x_{n+1} z_{n+1}=0$, we have $\beta z_{n+1}=0$. Repeating this argument we get $\alpha\in (\seq[y]{h},\seq[z]{n},x_{n+1}^N)T$ for all $N$. The induction hypothesis (2) shows that $\alpha \in (\seq[y]{h},\seq[z]{n})T$. Hence $z_{n+1}$ is a nonzerodivisor on $S$.

  Let $T'=T/(\seq[y]{h})T$. Since $x_{n+2}\in X_n$, the induction hypothesis shows that both $x_{n+2},\seq[z]{n}$ and  $\seq[z]{n} $ are regular sequences on $T'$. By \cref{p-cor-RS-Permutability}, $\seq[z]{n},x_{n+2}$ also form a regular sequence on $T'$. From the previous paragraph, we know that $\seq[z]{n},x_{n+1},x_{n+2}$ is a regular sequence on $T'$, and, again by \cref{p-cor-RS-Permutability}, $\seq[z]{n},x_{n+2},x_{n+1} $ form a regular sequence on $T'$. So $\seq[z]{n},x_{n+2},x_{n+1}-t_{n+1}x_{n+2}$ is a regular sequence on $T'$. Again from the previous paragraph, $\seq[z]{n},z_{n+1}$ is already a regular sequence on $T'$. We conclude that $ \seq[y]{h},\seq[z]{n},z_{n+1},x_{n+2} $ form a regular sequence on $T$ by \cref{p-cor-RS-Permutability}. This proves (1).

  For (2), let $Q=R^+(t_{\mu}:\mu\in\Lambda,\mu\neq\lambda_{n+1})$. Then $T=Q(t_{\lambda_{n+1}})$. 
  Since  $(x_{n+2}^N,\seqp{w}{l})Q$ is cofinal with $(x_{n+2},\seq[w]{l})^NQ$, it suffices to show that
  \[
    \bigcap_{N=1}^{\infty}((\seq[y]{n},\seq[z]{n+1})+(x_{n+2}^N,w_1^N,...,w_l^N))Q(t_{\lambda_{n+1}}) \subseteq (\seq[y]{h},\seq[z]{n+1})Q(t_{\lambda_{n+1}}).
  \]
  Since $\set{\seq[w]{l}}\cup \set{\seq[y]{h}} \subseteq X_{n+1}$, the ideal $(\seq[y]{h},\seqp{w}{l})\subseteq (x_{n+3},\ldots,x_d)$ is generated by part of a system of parameters of $R$. Hence $x_{n+1},x_{n+2}$ form a permutable regular sequence on the quotient ring $P = Q/(\seq[y]{h},\seqp{w}{l},\seq[z]{n})Q$. Applying \cref{p-prop-RS-PolynomialIntersect} to $B=P,x=x_{n+1},y=x_{n+2}$, we get
  \begin{align*}
    (\seq[y]{h},\seqp{w}{l},&\seq[z]{n},x_{n+1}-t_{\lambda_{n+1}}x_{n+2},x_{n+2}^N)T \cap Q \\
                            & =(\seq[y]{h},\seqp{w}{l},\seq[z]{n})Q+(x_{n+1},x_{n+2})^NQ.
  \end{align*}

  For any $u\in \cap_{N=1}^{\infty}(\seq[y]{h},\seqp{w}{l},\seq[z]{n},x_{n+1}-t_{\lambda_{n+1}}x_{n+2},x_{n+2}^N)T$, clear the denominators and assume that $u$ is a polynomial in $t_{\lambda_{n+1}}$ of degree $h$. Then $x_{n+2}^hu$ can be considered as a polynomial in $t_{\lambda_{n+1}}x_{n+2}$ of degree $h$. Therefore, we can divide $x_{n+2}^hu$ by the ``monic'' polynomial $t_{\lambda_{n+1}}x_{n+2}-x_{n+1}$ in $Q[t_{\lambda_{n+1}}]$ and get a remainder $b$ of degree $0$ in $t_{\lambda_{n+1}}$. So $x_{n+2}^hu-b\in (z_{n+1})Q[t_{\lambda_{n+1}}]$ and
  \begin{align*}
  b&\in \bigcap_{N=1}^{\infty}(\seq[y]{h},\seqp{w}{l},\seq[z]{n+1},x_{n+2}^N)T \cap Q \\
  \Rightarrow b&\in \bigcap_{N=1}^{\infty}((\seq[y]{h},\seqp{w}{l},\seq[z]{n})+(x_{n+1},x_{n+2})^N)Q,
  \end{align*}
  which implies that $b\in (\seq[y]{h},\seq[z]{n})Q$ by the induction hypothesis. Therefore, 
  \[
  x_{n+2}^hu\in (\seq[y]{h},\seq[z]{n+1})Q[t_{\lambda_{n+1}}]\Rightarrow x_{n+2}^hu\in (\seq[y]{h},\seq[z]{n+1})T.
  \]
  Note that $\seq[y]{h},\seq[z]{n+1},x_{n+2}^h$ is a regular sequence on $T$ by our proof of (1) above. Hence, we do not need the factor of $x_{n+2}^h$ and we have $u\in (\seq[y]{h},\seq[z]{n+1})T$.
\end{proof}

\subsection{Stabilization of colon ideals}
We say that the colon ideal $(\seq{n})R^+:_{R^+}y^{\infty}$ \emph{stabilizes} at a positive integer $N$ if
\[
  (\seq{n})R^+:_{R^+}y^{\infty}=(\seq{n})R^+:_{R^+}y^N.
\]
The next few results deal with the stability of colon ideals in non-noetherian rings.
\begin{lem}\label{p-lem-FullParaColonIdeal}
  Let $(R,\mm)$ be a $d$-dimensional local domain of prime characteristic $p$ and $T$ an $R$-algebra. Let $\seq{d}$ be a system of parameters of $R$. Suppose that $x_d$ is a nonzerodivisor on $T/(\seq{d-1})T$. Then for any element $y\in R$ and $n=d,d-1$, there is some $N_0$ such that
  \[
    (\seq{n})T:_Ty^{\infty} = (\seq{n})T:_Ty^{N_0}.
  \]
\end{lem}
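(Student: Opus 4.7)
The plan is to split the argument into two cases.

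\textbf{Case $n = d$.} Since $\seq{d}$ is a system of parameters in the noetherian local ring $R$, the ideal $(\seq{d})R$ is $\mm$-primary and $y^M \in (\seq{d})R$ for some $M$ (with $M = 0$ if $y$ is a unit). Hence $y^M T \subseteq (\seq{d})T$, giving $(\seq{d})T :_T y^M = T$, and the chain stabilizes at $N_0 = M$.

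\textbf{Case $n = d-1$.} I would write $y^M = q + a_d x_d$ with $q \in (\seq{d-1})R$ and $a_d \in R$, so that in $\bar T := T/(\seq{d-1})T$ we have the identity $y^M \equiv a_d x_d$. For $u \in T$ with $y^N u \in (\seq{d-1})T$, I write $N = kM + r$ with $0 \leq r < M$. Expanding $(y^M)^k = (a_d x_d + q)^k$ shows that every cross term except $a_d^k x_d^k$ lies in $(\seq{d-1})T$, so
\[
y^r a_d^k x_d^k u \in (\seq{d-1})T.
\]
Since $x_d$ (and hence $x_d^k$) is a nonzerodivisor on $\bar T$, we deduce $y^r a_d^k u \in (\seq{d-1})T$. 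Multiplying by $y^{M-r}$ and reapplying the identity together with the nonzerodivisor property yields $a_d^{k+1} u \in (\seq{d-1})T$. Conversely, $a_d^{K_0} u \in (\seq{d-1})T$ implies $y^{MK_0} u \in (\seq{d-1})T$ since $y^{MK_0} \equiv a_d^{K_0} x_d^{K_0}$ modulo $(\seq{d-1})T$. So the problem reduces to showing that the chain $(\seq{d-1})T :_T a_d^j$ stabilizes.

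For the reduced problem, I would iterate the same strategy with $a_d$ in place of $y$: if $a_d$ is a unit, the chain is constant at $(\seq{d-1})T$; otherwise $a_d \in \mm$ and some $a_d^L \in (\seq{d})R$ produces a decomposition $a_d^L = q' + a'_d x_d$, which similarly reduces the stabilization problem for powers of $a_d$ to one for powers of a new element $a'_d \in R$. This generates a sequence $a_d, a'_d, a''_d, \ldots$ in $R$, and the main obstacle is to show that this iteration terminates with a uniform bound on the resulting exponent $K_0$. I expect to exploit the noetherianity of $R$ here: the ascending chain of principal ideals $(a_d) \subseteq (a'_d) \subseteq \cdots$ must stabilize, and combined with the one-dimensional structure of $\bar R = R/(\seq{d-1})R$ and careful tracking of the exponents $L_j$ introduced at each stage, this should yield the desired uniform $N_0 = M K_0$.
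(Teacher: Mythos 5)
Your treatment of the case $n = d$ is correct and matches the paper's. For $n = d-1$, the first reduction you perform is sound: writing $y^M = q + a_d x_d$ with $q \in (\seq{d-1})R$, and using that $x_d$ is a nonzerodivisor on $T/(\seq{d-1})T$, you correctly show that $(\seq{d-1})T :_T y^\infty$ coincides with $(\seq{d-1})T :_T a_d^\infty$ and that a uniform bound for one gives a uniform bound for the other. That much is a valid step.

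The gap is in the proposed iteration. The claim that the principal ideals $(a_d) \subseteq (a'_d) \subseteq \cdots$ form an ascending chain in $R$ is unjustified, and in fact false in general: the relation $a_d^L = q' + a'_d x_d$ gives no containment between $(a_d)$ and $(a'_d)$. Worse, the iteration need not terminate at all. If $y$ is nilpotent in the artinian ring $\bar{R}_{x_d} := (R/(\seq{d-1})R)_{x_d}$, then $a_d$, $a'_d$, $a''_d, \ldots$ are all nilpotent there as well (each equals a power of $\bar y$ divided by a power of the unit $\bar x_d$), so you never reach a unit and the recursion makes no progress. This is exactly the case the argument would need to handle, since it is not an edge case but one of the two generic possibilities. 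The paper avoids the issue entirely by working with $A := \bar{R}_{x_d}$ directly: $A$ is artinian, hence a finite product of artinian local rings, so $\bar y$ is a unit or nilpotent in each factor, and therefore satisfies a single relation $\bar y^{N_0} = a\,\bar y^{2N_0}$ in $A$ for some $a \in A$ and suitable $N_0$. This relation is then pushed forward along $A \to B := (T/(\seq{d-1})T)_{x_d}$ and used to telescope $\bar y^{mN_0}\bar u = 0$ down to $\bar y^{N_0}\bar u = 0$, after which the nonzerodivisor hypothesis on $x_d$ transfers the stabilization from $T_{x_d}$ back to $T$. If you want to salvage your approach, you would need to treat the nilpotent case separately (say, by observing that nilpotence of $\bar y$ in $\bar R_{x_d}$ forces $y^s T \subseteq (\seq{d-1})T$ for some $s$, using the nonzerodivisor hypothesis), but then you would also need to decompose into components to handle the mixed unit/nilpotent situation, and at that point you have essentially rederived the paper's relation $\bar y^{N_0} = a\,\bar y^{2N_0}$.
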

\begin{proof}
  The conclusion is trivial if $y$ is a unit. So we assume that $y\in\mm$. For $n=d$, since $\mm$ is nilpotent on $(\seq{d})R$, and $y\in\mm$, there is some $N_0$ such that $y^{N_0}\in (\seq{d})R$. This $N_0$ will suffice.

  Look at $n=d-1$ and consider the ring $A=(R/(\seq{d-1})R)_{x_d}$. It is an artinian ring, and hence it is a product of artinian local rings, i.e., $A=A_1\times \cdots\times A_h$. The image of $y$ in each component $A_i$ is either a unit or a nilpotent, i.e., $\bar{y}=(\seq[y]{h})$ where, without loss of generality, $\seq[y]{k}$ are nilpotents and $y_{k+1},\ldots,y_h$ are units. So there is some positive power $N_0$ such that $\bar{y}^{N_0}=(0,0,\ldots,0,y_{k+1}^{N_0},\ldots,y_h^{N_0})$. There is some $a\in A$ such that
  \begin{equation} \label{eq:p-tpcc-3}
    \bar{y}^{N_0}=a\bar{y}^{2N_0}.
  \end{equation}
  Since $T$ is an $R$-algebra, we have a map $A\rightarrow B=(T/(\seq{d-1})T)_{x_d}$. The relation \eqref{eq:p-tpcc-3} of $\bar{y}^{N_0}$ and $\bar{y}^{2N_0}$ maps to a relation
  \begin{equation} \label{eq:p-tpcc-4}
    \bar{y}^{N_0}=b\bar{y}^{2N_0}
  \end{equation} 
  for some $b\in B$ and the same $N_0$. We claim that $(\seq{d-1})T_{x_d}:_{T_{x_d}}y^{\infty}=(\seq{d-1})T_{x_d}:_{T_{x_d}}y^{N_0}$. Take any $u\in (\seq{d-1})T_{x_d}:_{T_{x_d}}y^{\infty}$. Then the image $\bar{u}$ of $u$ in $B=(T/(\seq{d-1})T)_{x_d}$ is in $0:_B\bar{y}^{\infty}$. So there is some $N$ such that $\bar{y}^N\bar{u}=0$ in $B$. It is obvious that any higher power of $\bar{y}$ will kill $\bar{u}$. Hence we may assume without loss of generality that $N= m N_0$ and $m\geqslant 2$. Then $ \bar{y}^{m N_0} \bar{u} = 0$.
  Making use of the relation \eqref{eq:p-tpcc-4}, we have
  \begin{align*}
    b\bar{y}^{2N_0} \bar{y}^{(m -2)N_0} \bar{u}& =0 \\
    \Rightarrow   \bar{y}^{N_0} \bar{y}^{(m -2)N_0} \bar{u}&=0 \\
    \Rightarrow    \bar{y}^{(m -1)N_0} \bar{u}&=0.
  \end{align*}

  We can repeat this argument until $m$ reaches $1$. So $\bar{y}^{N_0}\bar{u}=0 $ in $B$ implies that $y^{N_0}u \in (\seq{d-1})T_{x_d}$, which in turn implies that $u\in (\seq{d-1})T_{x_d}:_{T_{x_d}}y^{N_0}$. So we have
  \[
    (\seq{d-1})T_{x_d}:_{T_{x_d}}y^{\infty}=(\seq{d-1})T_{x_d}:_{T_{x_d}}y^{N_0}.
  \]
  But we also know that $x_d$ is an nonzerodivisor on $T/(\seq{d-1})T$. So we have
  \[
    (\seq{d-1})T:_Ty^{\infty}=(\seq{d-1})T:_Ty^{N_0}.
  \]
  as well.
\end{proof}

\begin{thm}\label{p-thm-FFE-ParaColonPowerStable}
  Suppose $(R,\mm,K)$ is a $d$-dimensional complete local domain of prime characteristic $p$. Let $R^+$ be its absolute integral closure. Let $\Lambda$ be an uncountable index set and let $\set{t_{\lambda}:\lambda\in\Lambda}$ be a set of variables. Suppose $\seq[x]{d}$ is a system of parameters of $R$. Then for any $1\leqslant n\leqslant d$ and any $y\in R$, there is some $N_0$ such that
  \[
    (\seq[x]{n})T:_Ty^{\infty}=(\seq{n})T:_Ty^{N_0}
  \]
  where $T=\rptl$.
\end{thm}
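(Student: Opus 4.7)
The plan is to argue by descending induction on $n$, from $n=d$ down to $n=1$. The two base cases $n=d$ and $n=d-1$ follow directly from \cref{p-lem-FullParaColonIdeal} applied with $T$ as the $R$-algebra: the hypothesis that $x_d$ is a nonzerodivisor on $T/(\seq[x]{d-1})T$ is guaranteed because $\seq[x]{d}$ is a regular sequence on $R^+$ (big Cohen--Macaulayness) and this regularity is preserved under the faithfully flat base change $R^+\to T$.

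For the inductive step from $n+1$ to $n$, let $N_1$ be the stabilization constant produced by the inductive hypothesis, so $(\seq[x]{n+1})T:_T y^\infty=(\seq[x]{n+1})T:_T y^{N_1}$. Given $u\in(\seq[x]{n})T:_T y^\infty$, the inclusion $(\seq[x]{n})T\subseteq(\seq[x]{n+1})T$ forces $u\in(\seq[x]{n+1})T:_T y^{N_1}$, so we may write $y^{N_1}u=\sum_{i=1}^{n+1}a_i x_i$ with $a_i\in T$. Since $x_{n+1}$ is a nonzerodivisor on $T/(\seq[x]{n})T$, the tail coefficient $a_{n+1}$ is well-defined modulo $(\seq[x]{n})T$, and a short calculation (multiply through by a high enough power of $y$ to land in $(\seq[x]{n})T$, then cancel $x_{n+1}$) shows that $a_{n+1}$ again lies in $(\seq[x]{n})T:_T y^\infty$. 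Iterating this produces a sequence $a^{(0)}:=u,\ a^{(1)}:=a_{n+1},\ a^{(2)},\ldots$ in $(\seq[x]{n})T:_T y^\infty$ satisfying the telescoping congruence
\[
y^{kN_1}u\equiv a^{(k)}\,x_{n+1}^{k}\pmod{(\seq[x]{n})T}\qquad\text{for every }k\geqslant 0,
\]
so in particular $y^{kN_1}u\in(\seq[x]{n})T+(x_{n+1})^{k}T$ for every $k$.

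The uncountable family of variables enters next. Choose $\lambda\in\Lambda$ not appearing in $u$, the $a^{(k)}$, or the relations above (possible because the data recorded so far are countable while $\Lambda$ is uncountable), and set $\tilde y:=y-t_\lambda x_{n+1}$. By a polynomial-in-$t_\lambda$ argument parallel to the one in the proof of \cref{p-prop-FFE-ParamIdealIntersect}, and using that $x_{n+1}$ is a nonzerodivisor on $R^+(t_\mu:\mu\neq\lambda)/(\seq[x]{n})R^+(t_\mu:\mu\neq\lambda)$, the element $\tilde y$ is a nonzerodivisor on $T/(\seq[x]{n})T$. Since $\tilde y\equiv y\pmod{x_{n+1}}$, the same telescoping congruence above holds with $\tilde y$ in place of $y$, and the nonzerodivisor property of $\tilde y$ combined with the intersection identity
\[
\bigcap_{k\geqslant 1}\bigl((\seq[x]{n})+(x_{n+1})^k\bigr)T=(\seq[x]{n})T
\]
(which is \cref{p-thm-RSonT}(2), or equivalently \cref{p-prop-FFE-ParamIdealIntersect} applied with $J=(x_{n+1})R^+$) should force the iterated coefficients $a^{(k)}$ to land in $(\seq[x]{n})T$ after a number of steps $K$ depending only on $y$, $\seq[x]{n}$, and $N_1$, giving the desired uniform $N_0=KN_1$.

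The main obstacle --- and the place I expect to need the most technical care --- is precisely this last step: upgrading the easy pointwise termination of $\{a^{(k)}\}$ (which occurs as soon as $kN_1$ exceeds the particular $N$ witnessing $y^Nu\in(\seq[x]{n})T$ for the given $u$) to a termination index $K$ independent of $u$. It is here that the non-noetherianness of $T$ bites, and it is here that the abundance of fresh variables provided by uncountable $\Lambda$ must substitute for the usual noetherian finite-generation arguments, by letting the nonzerodivisor $\tilde y$ be chosen after $u$ is fixed and still be genuinely generic with respect to all the data associated with $u$.
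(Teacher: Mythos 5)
Your base cases and the iteration producing the $a^{(k)}$ are correct as far as they go: the telescoping congruence $y^{kN_1}u\equiv a^{(k)}x_{n+1}^k\pmod{(\seq{n})T}$ does hold, each $a^{(k)}$ lies in $(\seq{n})T:_Ty^{\infty}$, and the \emph{pointwise} termination $a^{(k)}\in(\seq{n})T$ once $kN_1\geqslant N(u)$ is easy, as you say. But the gap you flag at the end is real, and the mechanism you propose cannot close it. A fresh $\tilde y=y-t_{\lambda}x_{n+1}$ chosen generically \emph{after} $u$ is fixed does not yield a $u$-independent bound, because any bound extracted from $\tilde y$ is a priori allowed to depend on $\tilde y$, which in turn depends on $u$; you are back to a pointwise bound. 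Concretely, from $\tilde y^{kN_1}u\in(\seq{n})T+(x_{n+1})^kT$ for all $k$ you cannot intersect over $k$ (the left-hand side varies with $k$), and the nonzerodivisor property of $\tilde y$ modulo $(\seq{n})T$ is of no help because of the floating error term $(x_{n+1})^kT$.

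The paper closes the gap with a genuinely different device, and this is the idea your plan is missing. Instead of perturbing $y$, it perturbs the parameter sequence, and instead of one generic perturbation it runs a \emph{pigeonhole over uncountably many}. Fix $n$ and set $z_i=x_i-t_{\lambda_i}x_{i+1}$; one proves by reverse induction on $k$ from $d-1$ down to $n$ that $(\seq{n},z_{n+1},\ldots,z_k)T:_Ty^{\infty}$ stabilizes. In the inductive step, for each $\mu$ the ideal $(\seq{n},z_{n+1},\ldots,z_k,x_{k+1}-t_{\mu}x_{k+2})T:_Ty^{\infty}$ stabilizes at some integer $N(\mu)$ by hypothesis; since positive integers form a countable set while $\Lambda$ is uncountable, one $N_0$ works for infinitely many $\mu_1,\mu_2,\ldots$. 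For $u$ in the colon, $y^{N_0}u$ then lies in $(\seq{n},z_{n+1},\ldots,z_k,x_{k+1}-t_{\mu_i}x_{k+2})T$ for every $i$, and the identity $\bigcap_{i=1}^l(a_i)S=\prod_{i=1}^l(a_i)S$ (with $a_i=x_{k+1}-t_{\mu_i}x_{k+2}$ and $S=T/(\seq{n},z_{n+1},\ldots,z_k)T$) puts $y^{N_0}u$ into $(\seq{n},z_{n+1},\ldots,z_k)T+(x_{k+1},x_{k+2})^lT$ for every $l$, at which point \cref{p-prop-FFE-ParamIdealIntersect} finishes. The pigeonhole over $\mu$ and the product trick are exactly what your single-$\tilde y$ telescoping lacks; without them the uniform bound does not emerge.
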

\begin{proof}
  By \cref{p-lem-FullParaColonIdeal}, the conclusion is true for $n=d,d-1$. We assume that $n\leqslant d-2$. We also assume that $y\in\mm$. Let $z_i=x_i-t_{\lambda_i}x_{i+1}$ ($1\leqslant i\leqslant d-1$). Consider the sequence $\seq{n},z_{n+1},\ldots,z_{d-1},x_d$. It is easy to see that $(\seq{n},z_{n+1},\ldots,z_{d-1},x_d)R(t_{\lambda_{n+1}},\ldots,t_{\lambda_{d-1}})=(\seq{d})R(t_{\lambda_{n+1}},\ldots,t_{\lambda_{d-1}})$. So by \cref{p-lem-FullParaColonIdeal}, for any choice of $\lambda_{n+1},\ldots,\lambda_{d-1}$, there is some $N_0$ such that
  \[
    (\seq{n},z_{n+1},\ldots,z_{d-1})T:_Ty^{\infty}=(\seq{n},z_{n+1},\ldots,z_{d-1})T:_Ty^{N_0}.
  \]

  We want to show that
  \[
    (\seq{n},z_{n+1},\ldots,z_k)T:_Ty^{\infty}=(\seq{n},z_{n+1},\ldots,z_k)T:_Ty^{N_0}
  \]
  for $k\geqslant n$.
  We prove this by reverse induction on $k$. The base case $k=d-1$ is done. Now suppose that this is true for $k+1$.
  Look at ideals $(\seq{n},z_{n+1},\ldots,z_k,x_{k+1}-t_{\mu}x_{k+2})T:_T y^{\infty}$. The induction hypothesis shows that each ideal stabilizes at some $N$. There are uncountably many $\mu\in\Lambda$. So we can find some $N_0$ such that
  \begin{equation}\label{eq:2b2}
    (\seq{n},z_{n+1},\ldots, z_k,x_{k+1}-t_{\mu}x_{k+2})T:_T y^{\infty}=(\seq{n},z_{n+1},\ldots,z_k,x_{k+1}-t_{\mu} x_{k+2})T:_T y^{N_0}.
  \end{equation}
  holds for infinitely many choices of $\mu$. In particular, there are countably many $\mu_1,\mu_2,\ldots$ avoiding all $\lambda_{n+1},\ldots,\lambda_k$ such that \eqref{eq:2b2} holds.

  For any $u\in (\seq{n},z_{n+1},\ldots,z_k)T:_Ty^{\infty}$, there is some $N$ such that $y^Nu\in (\seq{n},z_{n+1},\ldots,z_k)T$. So
  \[
    y^Nu\in (\seq{n},z_{n+1},\ldots,z_k,x_{k+1}-t_{\mu_i} x_{k+2})T\Rightarrow y^{N_0}u\in (\seq{n},z_{n+1},\ldots,z_k,x_{k+1}-t_{\mu_i} x_{k+2})T
  \]
  for all choices of $\mu_i$.

  Hence for any $l$, we have 
  \[
  y^{N_0}u\in \cap_{i=1}^l (\seq{n},z_{n+1},\ldots,z_k,x_{k+1}-t_{\mu_i} x_{k+2})T.
  \]
  Let $a_i=x_{k+1}-t_{\mu_i}x_{k+2}$ and $S=T/(\seq{n},z_{n+1},\ldots,z_k)T$.
  We claim that
  \begin{equation}\label{eq:lbl}
    \bigcap_{i=1}^l (a_i)S =\prod_{i=1}^l (a_i)S  
  \end{equation}
  for any $l$.
  
  We first notice that any two elements $a_i$ and $a_j$ where $i\neq j\in\NN$ form a regular sequence in $S$: because $(a_i,a_j)S=(x_{n+1},x_{n+2})S$, they form a regular sequence on $S$. We prove \eqref{eq:lbl} by induction on $l$. The case $l=1$ is trivial. Suppose that $u\in \left( \cap_{i=1}^l (a_i)S\right) \cap (a_{l+1})S=\left(\prod_{i=1}^l (a_i)S\right)\cap (a_{l+1})S$. Write $c_l=\prod_{i=1}^la_i$. Then $u=\alpha c_l=\beta a_{l+1}$ for some $\alpha,\beta\in S$. Since $a_{l+1},a_i$ ($ i\neq l$) form a regular sequence, so do $a_{l+1},c_l$. Hence $\alpha\in (a_{l+1})S\Rightarrow u\in (a_{l+1}c_l)S$ and \eqref{eq:lbl} is proved.

  So for any $l$, we have
  \[
    y^{N_0}u \in (\seq{n},z_{n+1},\ldots,z_k)T+\prod_{i=1}^l (x_{k+1}-t_{\mu_i} x_{k+2})T.
  \]
  
  Since
  \begin{align*}
    \bigcap_{l=1}^{\infty}&\left((\seq{n},z_{n+1},\ldots,z_k)T+\prod_{i=1}^l (x_{k+1}-t_{\mu_i} x_{k+2})T\right) \\
                          & \subseteq \bigcap_{l=1}^{\infty}\left((\seq{n},z_{n+1},\ldots,z_k)T+\left((x_{k+1},x_{k+2})T\right)^l\right),
  \end{align*}
  we may apply \cref{p-prop-FFE-ParamIdealIntersect} to see that the right-hand side is in $(\seq{n},z_{n+1},\ldots,z_k)T$. We conclude that
  \[
    y^{N_0}\in (\seq{n},z_{n+1},\ldots,z_k)T.
  \]
  So we have
  \[
    (\seq{n},z_{n+1},\ldots,z_k)T:_T y^{\infty} = (\seq{n},z_{n+1},\ldots,z_k)T:_T y^{N_0}.
  \]
  The case $k=n$ is the conclusion of the theorem.
\end{proof}

\begin{rmk}
In \cref{p-thm-FFE-ParaColonPowerStable}, one can assume that $\Lambda$ is a countably infinite index set. The proof still works if we make the following modification: 
we observe that for two different variables $t_{\lambda}$ and $t_{\mu}$, the map swapping $t_{\lambda}$ and $t_{\mu}$ is an automorphism of $T=\rptl$. Hence, if we have
\[
    (\seq{n},z_{n+1},\ldots, z_k,x_{k+1}-t_{\lambda}x_{k+2})T:_T y^{\infty}=(\seq{n},z_{n+1},\ldots,z_k,x_{k+1}-t_{\lambda} x_{k+2})T:_T y^{N_0}
\]
for some $N_0$, then by applying the automorphism we just described, we have
\[
    (\seq{n},z_{n+1},\ldots, z_k,x_{k+1}-t_{\mu}x_{k+2})T:_T y^{\infty}=(\seq{n},z_{n+1},\ldots,z_k,x_{k+1}-t_{\mu} x_{k+2})T:_T y^{N_0}.
\]
So the proof where we show that there are countably many $\mu$ such that \eqref{eq:2b2} holds can be modified as follows: Look at ideals $(\seq{n},z_{n+1},\ldots,z_k,x_{k+1}-t_{\mu}x_{k+2})T:_T y^{\infty}$. The induction hypothesis shows that each ideal stabilizes at some $N$. If one such ideal stabilizes at some integer $N_0$, then by permuting the variables $t_{\mu}$ we see that all such ideals stabilize at the same $N_0$.
\end{rmk}

\begin{cor}\label{p-cor-ParaColonPowerStable}
  Suppose $(R,\mm,K)$ is a $d$-dimensional complete local domain of prime characteristic $p$. Let $R^+$ be its absolute integral closure. Suppose $\seq{d}$ is a system of parameters of $R$. Then for any $1\leqslant n\leqslant d$ and any $y\in R$, there is some $N_0$ such that
  \[
    (\seq{n})R^+:_{R^+}y^{\infty}=(\seq{n})R^+:_{R^+}y^{N_0}.
  \]
\end{cor}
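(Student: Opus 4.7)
The plan is to deduce the corollary from \cref{p-thm-FFE-ParaColonPowerStable} by descent along the faithfully flat map $R^+ \to T$, where $T = \rptl$ and $\Lambda$ is chosen to be any uncountable index set (e.g.\ $\Lambda = \RR$). As already observed in \cref{sn-p-PositiveCharCase}, the natural map $R^+ \to T$ is faithfully flat, because $R^+ \to \rpbtl$ is free and $T$ is the localization of $\rpbtl$ at $\mm_{R^+}\rpbtl$, and $R^+$ is quasilocal (complete local domains are henselian). A standard consequence that I will use repeatedly is the contraction identity $IT \cap R^+ = I$ for every ideal $I \subseteq R^+$.

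With faithful flatness in hand, apply \cref{p-thm-FFE-ParaColonPowerStable} to the given system of parameters $\seq{d}$ and the element $y \in R$ to produce an integer $N_0$ such that
\[
  (\seq{n})T :_T y^\infty \;=\; (\seq{n})T :_T y^{N_0}.
\]
I claim that this same $N_0$ works for $R^+$. Indeed, take $u \in (\seq{n})R^+ :_{R^+} y^\infty$; then $y^N u \in (\seq{n})R^+ \subseteq (\seq{n})T$ for some $N$, so $u \in (\seq{n})T :_T y^\infty = (\seq{n})T :_T y^{N_0}$. Hence $y^{N_0} u \in (\seq{n})T$, and contracting to $R^+$ via faithful flatness yields $y^{N_0} u \in (\seq{n})T \cap R^+ = (\seq{n})R^+$, that is, $u \in (\seq{n})R^+ :_{R^+} y^{N_0}$. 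The reverse containment is obvious, so the two colon ideals coincide.

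The real work is entirely carried out by \cref{p-thm-FFE-ParaColonPowerStable}; the descent from $T$ down to $R^+$ is a one-line application of faithful flatness and presents no obstacle. The only point worth being explicit about is that the integer $N_0$ extracted from the theorem depends on $\seq{n}$ and $y$ but not on the choice of the index set $\Lambda$, which is plain from the statement of the theorem.
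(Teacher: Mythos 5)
Your proof is correct and essentially identical to the paper's: invoke \cref{p-thm-FFE-ParaColonPowerStable} to get stability over $T$, then contract to $R^+$ using faithful flatness of $R^+\to T$. One cosmetic note: you write $T=\rptl$, inheriting what appears to be a typo in the statement of \cref{p-thm-FFE-ParaColonPowerStable}, but the content of your justification (freeness of $\rpbtl$ over $R^+$, localization at $\mm_{R^+}\rpbtl$) makes clear the intended ring is $T=\rpptl$, which is also what the paper uses.
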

\begin{proof}
  Let $\Lambda$ be an uncountable index set and $\set{t_{\lambda}:\lambda\in\Lambda}$ a set of new variables. Then by \cref{p-thm-FFE-ParaColonPowerStable}, there is some $N_0$ such that
  \[
    (\seq{n})\rpptl:_{\rpptl}y^{\infty}=(\seq{n})\rpptl:_{\rpptl}y^{N_0}.
  \]
  For any element $u \in (\seq{n})R^+:_{R^+}y^{\infty}$, there is some $N$ such that $y^Nu\in   (\seq{n})R^+$. So it is also in $ (\seq{n})\rpptl$. We have $y^{N_0}u\in (\seq{n})\rpptl$ by equality above. So we have
  \[
    y^{N_0}u\in (\seq{n})\rpptl\cap R^+ \Rightarrow     y^{N_0}u\in (\seq{n})R^+
  \]
  as the map $R^+\rightarrow \rpptl$ is faithfully flat. Hence $u\in (\seq{n})R^+:_{R^+}y^{N_0}$, as desired.
\end{proof}

\subsection{The main theorem} We are almost ready to prove our main theorem of this section. Before that, let us derive a useful corollary from the results on the stability of colon ideals.

\begin{cor}\label{p-cor-ParaIdealDecomp}
  Suppose $(R,\mm)$ is a $d$-dimensional complete local domain of prime characteristic $p$. Let $R^+$ be its absolute integral closure. Suppose $\seq{d}$ is a system of parameters of $R$. Then for any $1\leqslant n\leqslant d$ and any $y\in R$, there is some $N_0$ such that $(\seq{n})R^+ = \mf{a}\cap \mf{b}$ where $\mf{a}=(\seq{n},y^{N_0})R^+$ and $\mf{b}=(\seq{n}):_{R^+}y^{N_0}$.
\end{cor}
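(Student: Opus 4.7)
The plan is to derive this directly from the stabilization result \cref{p-cor-ParaColonPowerStable}. Write $I = (\seq{n})R^+$. Apply \cref{p-cor-ParaColonPowerStable} to obtain $N_0$ such that $I :_{R^+} y^{\infty} = I :_{R^+} y^{N_0}$. I claim this $N_0$ works.

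The containment $I \subseteq \mf{a} \cap \mf{b}$ is immediate: $I \subseteq (I, y^{N_0})R^+ = \mf{a}$, and $I \cdot y^{N_0} \subseteq I$ shows $I \subseteq I :_{R^+} y^{N_0} = \mf{b}$.

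For the reverse containment, take $u \in \mf{a} \cap \mf{b}$. Since $u \in \mf{a}$, write $u = a + b y^{N_0}$ with $a \in I$ and $b \in R^+$. Since $u \in \mf{b}$, we have $y^{N_0} u \in I$, so
\[
  b y^{2 N_0} = y^{N_0} u - y^{N_0} a \in I,
\]
which means $b \in I :_{R^+} y^{2 N_0} \subseteq I :_{R^+} y^{\infty} = I :_{R^+} y^{N_0}$. Hence $b y^{N_0} \in I$ and therefore $u = a + b y^{N_0} \in I$.

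There is no real obstacle here: this is the standard ``colon-power decomposition'' argument, and the only nontrivial input is the stabilization of the colon chain in the non-noetherian ring $R^+$, which is exactly what \cref{p-cor-ParaColonPowerStable} supplies. The proof will be short, essentially two or three lines invoking the corollary and unwinding the definitions.
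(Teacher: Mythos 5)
Your proof is correct and matches the paper's argument essentially line for line: both invoke \cref{p-cor-ParaColonPowerStable} to get $N_0$, write $u=a+by^{N_0}$ with $a\in(\seq{n})R^+$, deduce $by^{2N_0}\in(\seq{n})R^+$ from $u\in\mf{b}$, and conclude $b\in\mf{b}$ so that $by^{N_0}$ and hence $u$ lie in $(\seq{n})R^+$. No differences worth noting.
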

\begin{proof}
  By \cref{p-cor-ParaColonPowerStable}, there is some $N_0$ such that $(\seq{n})R^+:_{R^+}y^{\infty}=(\seq{n})R^+:_{R^+}y^{N_0}$. Let $\mf{a}=(\seq{n},y^{N_0})R^+$ and $\mf{b}=(\seq{n})R^+:_{R^+}y^{N_0}$. Then we have $\mf{b}=(\seq{n})R^+:_{R^+}y^{\infty}$. For any $u\in \mf{a}\cap \mf{b}$, we can write $u=a_1x_1+\cdots+a_nx_n+by^{N_0}$ and we know that $y^{N_0}u\in (\seq{n})R^+$. So we have $by^{2N_0}\in(\seq{n})R^+\Rightarrow b\in\mf{b}\Rightarrow by^{N_0}\in(\seq{n})R^+$. So $u\in (\seq{n})R^+$. The reverse inclusion is trivial.
\end{proof}

Now we prove the main theorem.
\begin{thm}\label{p-thm-ColonCap}
  Suppose $(R,\mm)$ is a $d$-dimensional complete local domain of prime
  characteristic $p$. Let $R^+$ be its absolute integral closure.
  Suppose $\seq{d}$ is a system of parameters of $R$. Then for any
  $1\leqslant n\leqslant d$ and any $y\in R$, there is some positive integer $N_0$
  such that for all $N\geqslant N_{0}$,
  \[
    (\seq{n},y^N)R^+:_{R^+}x_{n+1}\subseteq (\seq{n},y^{N-N_0})R^+.
  \]
\end{thm}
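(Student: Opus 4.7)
My plan is to reduce the statement to the stability of the colon ideal $(\seq{n+1})R^+ :_{R^+} y^{\infty}$, which is provided by \cref{p-cor-ParaColonPowerStable}, together with the fact that $\seq{n+1}$ is a regular sequence on $R^+$ (since $R^+$ is a big Cohen-Macaulay algebra in characteristic $p$ by the result of Hochster--Huneke).

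First, I would apply \cref{p-cor-ParaColonPowerStable} to the sop of $R$ extending $\seq{n+1}$, taking the partial sop $\seq{n+1}$ and the element $y \in R$: this furnishes a positive integer $N_0$ with
\[
  (\seq{n+1})R^+ :_{R^+} y^{\infty} = (\seq{n+1})R^+ :_{R^+} y^{N_0}.
\]
I claim this $N_0$ works. Let $N \geqslant N_0$ and pick $u \in (\seq{n}, y^N)R^+ :_{R^+} x_{n+1}$, and write
\[
  u x_{n+1} = a_1 x_1 + \cdots + a_n x_n + b y^N
\]
for some $a_1,\ldots,a_n, b \in R^+$. Then $b y^N \in (\seq{n+1})R^+$, i.e.\ $b \in (\seq{n+1})R^+ :_{R^+} y^N$, and since $N \geqslant N_0$ the stability above yields $b y^{N_0} \in (\seq{n+1})R^+$.

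Next I would write $b y^{N_0} = c_0 x_{n+1} + c_1 x_1 + \cdots + c_n x_n$ for some $c_i \in R^+$, multiply through by $y^{N-N_0}$, and substitute back into the expression for $u x_{n+1}$:
\[
  u x_{n+1} = \sum_{i=1}^n \bigl(a_i + c_i y^{N-N_0}\bigr) x_i + c_0 y^{N-N_0}\, x_{n+1},
\]
which gives $(u - c_0 y^{N-N_0})\, x_{n+1} \in (\seq{n})R^+$. Finally, since $\seq{n+1}$ is a regular sequence on $R^+$, $x_{n+1}$ is a nonzerodivisor modulo $(\seq{n})R^+$, and therefore $u - c_0 y^{N-N_0} \in (\seq{n})R^+$, i.e.\ $u \in (\seq{n}, y^{N-N_0})R^+$, as required.

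The conceptual difficulty of this section is entirely concentrated in establishing \cref{p-thm-FFE-ParaColonPowerStable} (and hence \cref{p-cor-ParaColonPowerStable}); once one has uniform stability of the colon ideals by powers of $y$, the theorem is a short algebraic manipulation. The only subtle point to check is that the stability is applied to the partial sop of length $n+1$ (for the ideal $(\seq{n}, x_{n+1})R^+$) rather than of length $n$, which is what allows one to convert a relation modulo $(\seq{n})R^+$ involving $y^N$ and $x_{n+1}$ into one where $x_{n+1}$ and $y^{N_0}$ have been absorbed simultaneously.
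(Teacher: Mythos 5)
Your proof is correct and follows essentially the same route as the paper's: both fix $N_0$ from the stability of $(\seq{n+1})R^+:_{R^+}y^{\infty}$, rewrite the relation $ux_{n+1}=a_1x_1+\cdots+a_nx_n+by^N$ so that $by^{N_0}\in(\seq{n+1})R^+$, multiply by $y^{N-N_0}$ to absorb $x_{n+1}$, and finish using that $R^+$ is a big Cohen--Macaulay algebra. The only cosmetic difference is that you invoke \cref{p-cor-ParaColonPowerStable} directly, whereas the paper routes through \cref{p-cor-ParaIdealDecomp} (though it really uses only the stability part of that corollary).
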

\begin{proof}
  By applying \cref{p-cor-ParaIdealDecomp} to the system of parameters
  $\seq{n+1}$ and $y$, we know that there is some $N_{0}$ such that if
  we write $\mf{a}=(x_1,\ldots.,x_{n+1},y^{N_0})R^+$ and $\mf{b}=(x_1,\ldots.,x_{n+1})R^+:_{R^+}y^{N_0}$,
  then we have
  $(x_1,\ldots,x_{n+1})R^+=\mf{a}\cap\mf{b}$. For any $u\in
  (x_1,\ldots,x_n,y^N)R^+:_{R^+}x_{n+1}$, we have
  $ux_{n+1}=u_1x_1+\cdots+u_nx_n+vy^N$ for some $\seq[u]{n},v\in R^+$.
  Therefore, $v\in (\seq{n+1})R^+:_{R^+}y^N\subseteq
  \mf{b}=(\seq{n+1})R^+:_{R^+}y^{N_0}$. So we can write $
  vy^{N_0}=v_1x_1+\cdots+v_{n+1}x_{n+1}$ for some $\seq[v]{n+1}\in R^+$.
  Hence,
  \begin{align*}
    (u-v_{n+1}y^{N-N_0})x_{n+1}&=(u_1+v_1y^{N-N_0})x_1+\cdots+(u_n+v_ny^{N-N_0})x_n \\
    \Rightarrow u-v_{n+1}y^{N-N_0} &\in (\seq{n})R^+:_{R^+}x_{n+1}.
  \end{align*}
  Since $R^+$ is a big Cohen-Macaulay algebra of $R$, we have $   u-v_{n+1}y^{N-N_0} \in  (\seq{n})R^+ $. Therefore, we have $ u \in (\seq{n},y^{N-N_0})R^+ $. 
\end{proof}
\begin{rmk}
  Since $R$ is complete local, by Cohen's structure theorem, $R$ is module-finite over some complete regular local domain $A$ of characteristic $p$ with respect to the system of parameters $\seq{d}$. If the element $y$ happens to be in $A$, then the same proof (i.e., the proof of \cref{m-thm-pColonCap}) as in the mixed characteristic case also works.
\end{rmk}

\section{Questions}
\subsection{More variations of \texorpdfstring{$\epf$}{epf} closure}
Instead of considering one closure operation, we can consider a family of closure operations as follow. Let $(R,\mm)$ be a complete local domain of mixed characteristic $p$ and $R^+$ its absolute integral closure. An element $u\in R$ is in the closure of $I\subseteq R$ if there is some nonzero element $c\in R$ such that for any $N\in\NN,\varepsilon\in\QQ^+$, we have
\[
  c^{\varepsilon}u\in IR^+ + J^NR^+,
\]
where $J$ is some fixed ideal in $R$ containing $p$ and contained in $\mm$. In particular, if we choose $J$ to be $pR$, then we recover the usual $\epf$ closure.

All these closure operations are potentially larger than the $\epf$ closure. So they all satisfy the usual colon-capturing property. Moreover, the proof that the $\epf$ closure is trivial on regular local rings \cite[Theorem 3.9]{Heitmann2018} works for this family of closures. So they are trivial on regular local rings. However, we do not know whether any of these closure operations (including the $\epf$ closure) satisfy the generalized colon-capturing axiom. So we ask
\begin{ques}
  Do any of the closure operations described above satisfy the generalized colon-capturing axiom?
\end{ques}

\subsection{Persistence}
It is also interesting to know if any of these closure operations has persistence. Recall that a closure operation $\cl$ satisfies persistence for change of rings if whenever $R\rightarrow S$ is a local morphism between complete local domains, and $N\subseteq M$ are finitely generated $R$-modules, then 
\[
\aim \left( S\otimes_R N_M^{\cl} \rightarrow S\otimes_R M \right) \subseteq \left( \aim \left( S\otimes_R N\rightarrow S\otimes_R M \right) \right)^{\cl}_{S\otimes_R M}.
\]

We do not know whether any closure operation mentioned above (including $\rof$ and $\wepf$ closures) has the persistence property for local morphisms between mixed-characteristic complete local domains. One can also ask the same question when the target ring is a complete local domain of characteristic $p$.

\bibliography{Closure_op_in_com_local_rings_of_mixed_char}
\bibliographystyle{halpha}

\end{document}